\theoremstyle{definition}
 \newtheorem{dfn}{Definition}[section]
 \newtheorem{remark}[dfn]{Remark}
\theoremstyle{plain}
 \newtheorem{thm}[dfn]{Theorem}
 \newtheorem{lem}[dfn]{Lemma}
\numberwithin{equation}{section}
\newcommand{\bA}{{\mathbb A}}
\newcommand{\bD}{{\bold D}}
\newcommand{\bG}{{\bold G}}
\newcommand{\bH}{{\mathbb H}}
\newcommand{\bI}{{\mathbb I}}
\newcommand{\bU}{{\bold U}}
\newcommand{\bL}{{\mathbb L}}
\newcommand{\bM}{{M}}
\newcommand{\DV}{{\rm Div}\,}
\newcommand{\dv}{\, {\rm div}\,}
\newcommand{\BR}{{\Bbb R}}
\newcommand{\BC}{{\Bbb C}}
\newcommand{\BN}{{\Bbb N}}
\newcommand{\BG}{{\Bbb G}}
\newcommand{\BH}{{\Bbb H}}
\newcommand{\BI}{{\Bbb I}}
\newcommand{\BP}{{\Bbb P}}
\newcommand{\BQ}{{\Bbb Q}}
\newcommand{\BS}{{\Bbb S}}
\newcommand{\CA}{{\mathcal A}}
\newcommand{\CB}{{\mathcal B}}
\newcommand{\CD}{{\mathcal D}}
\newcommand{\CF}{{\mathcal F}}
\newcommand{\CI}{{\mathcal I}}
\newcommand{\CL}{{\mathcal L}}
\newcommand{\CN}{{\mathcal N}}
\newcommand{\CR}{{\mathcal R}}
\newcommand{\CS}{{\mathcal S}}
\newcommand{\CT}{{\mathcal T}}
\newcommand{\bff}{{\bold f}}
\newcommand{\bv}{{\bold v}}
\newcommand{\bu}{{\bold u}}
\newcommand{\bg}{{\mathbb G}}
\newcommand{\bQ}{{\mathbb Q}}
\newcommand{\bP}{{\mathbb P}}
\newcommand{\bS}{{\mathbb S}}
\newcommand{\bF}{{\bold F}}
\newcommand{\pd}{\partial}
\newcommand{\R}{\mathbb{R}}
\newcommand{\N}{\mathbb{N}}
\newcommand{\C}{\mathbb{C}}
\newcommand{\vp}{\varphi}
\newcommand{\fp}{{\frak p}}
\newcommand{\tr}{\mathrm{tr}}
\title{Global well posedness for a Q-tensor model of nematic liquid crystals}
\author{
Miho MURATA
\thanks{Department of Mathematical and System Engineering,
Faculty of Engineering,
Shizuoka University, 
\endgraf
3-5-1 Johoku, Naka-ku, Hamamatsu-shi, Shizuoka,
432-8561, Japan.
\endgraf
e-mail address: murata.miho@shizuoka.ac.jp
\endgraf
Partially supported by JSPS
Grant-in-aid for Young Scientists 21K13819
}\enskip and \enskip
Yoshihiro SHIBATA
\thanks{Department of Mathematics,  
Waseda University, \endgraf
Ohkubo 3-4-1, Shinjuku-ku, Tokyo 169-8555, Japan. \endgraf
e-mail address: yshibata@waseda.jp
\endgraf
Adjunct faculty member in the Department of Mechanical Engineering and
Materials Science, University of Pittsburgh.
\endgraf
Partially support by JSPS Grant-in-aid for Scientific Research (A) 17H0109
and 
Top Global University Project}
}
\date{}
\begin{document}
\maketitle

\begin{abstract}
In this paper, we prove the global well posedness 
and the decay estimates for a $\bQ$-tensor model of nematic liquid crystals
in $\BR^N$, $N \geq 3$.
This system is coupled system by the Navier-Stokes equations with a parabolic-type equation
describing the evolution of the director fields $\bQ$. 
The proof is based on the maximal $L_p$ -$L_q$ regularity and the $L_p$ -$L_q$ decay estimates
to the linearized problem. 
\end{abstract}

\section{Introduction}

We consider the model for  
a viscous incompressible liquid crystal flow
proposed by Beris and Edwards \cite{BE}.
In this model, the molecular orientation of the liquid crystal
is described by a tensor $\bQ=\bQ(x, t)$. 
More precisely, we consider the following system in the $N$ dimensional 
Euclidean space $\BR^N$, $N \geq 3$. 
\begin{equation}\label{nonlinear0}
\left\{
\begin{aligned}
&\pd_t \bu + (\bu \cdot \nabla) \bu + \nabla \fp
= \Delta \bu +\DV (\tau(\bQ)+\sigma(\bQ)), 
\enskip \dv \bu=0
& \quad&\text{in $\R^N$ for $t \in \BR$}, \\
&\pd_t \bQ + (\bu \cdot \nabla) \bQ - \bS(\nabla \bu, \bQ)
=\bH & \quad&\text{in $\R^N$ for $t \in \BR$}, \\
&(\bu, \bQ) = (\bu_0, \bQ_0)& \quad&\text{in $\R^N$}.
\end{aligned}\right.
\end{equation}
Here, $\pd_t = \pd/\pd t$, $t$ is the time variable, 
$\bu(x, t) = (u_1(x, t), \ldots, u_N(x, t))^T$ is the fluid velocity,
where $\bM^T$ denotes the transposed $\bM$, 
and $\fp=\fp(x, t)$ is the pressure.
For vector of functions $\bv$,
we set $\dv \bv = \sum_{j=1}^N\pd_j v_j$,
and also for $N\times N$ matrix field $\bA$ with $(j,k)^{\rm th}$ components $A_{jk}$, 
the quantity $\DV \bA$ is an 
$N$-vector with $j^{\rm th}$ component $\sum_{k=1}^N\pd_kA_{jk}$,
where $\pd_k=\pd/\pd x_k$.
The tensors $\bS(\nabla \bu, \bQ)$, $\tau(\bQ)$, and $\sigma(\bQ)$ are
\begin{align*}
\bS(\nabla \bu, \bQ)&=
(\xi \bD(\bu)+ W(\bu)) \left( \bQ + \frac{1}{N} \bI \right)
+\left( \bQ + \frac{1}{N} \bI \right)(\xi \bD(\bu)- W(\bu))
-2\xi\left( \bQ + \frac{1}{N} \bI \right) \bQ : \nabla \bu,\\
\tau(\bQ) 
&=2\xi \bH: \bQ\left( \bQ + \frac{1}{N} \bI \right)
-\xi\left[\bH\left( \bQ + \frac{1}{N} \bI \right) + \left( \bQ + \frac{1}{N} \bI \right) \bH\right]
- \nabla \bQ \odot \nabla \bQ,
\\
\sigma(\bQ)
&=\bQ \bH - \bH \bQ,
\end{align*}
where 
$\bD(\bu) = (\nabla \bu +(\nabla \bu)^T)/2$ and
$W(\bu) = (\nabla \bu -(\nabla \bu)^T)/2$
denote the symmetric and antisymmetric part of $\nabla \bu$,
respectively,
the $(i, j)$ component of $\nabla \bQ \odot \nabla \bQ$
is $\sum^N_{\alpha, \beta = 1} \pd_i Q_{\alpha\beta} \pd_j Q_{\alpha\beta}$,
$\bI$ is the $N\times N$ identity matrix.
$\bS(\nabla \bu, \bQ)$ describes how the flow gradient rotates and stretches 
the order parameter $\bQ$.
A scalar parameter $\xi\in \R$
denotes the ratio between the tumbling 
and the aligning effects that a shear flow would exert over the directors.
$\bH$ is given by 
\[
\bH = \Delta \bQ - \CL[\pd F(\bQ)],
\]
where 
$F(\bQ)$ denotes the bulk energy of Landau-de Gennes type:
\[
F(\bQ)=\frac{a}{2} \tr(\bQ^2) - \frac{b}{3} \tr(\bQ^3) + \frac{c}{4}(\tr (\bQ^2))^2
\]
with material-dependent and temperature-dependent constants $a, b, c \in \R$.
$\CL[\bA]$ denotes the projection 
onto the space of traceless matrices, namely,
\[
\CL[\bA] = \bA -\frac{1}{N}\tr [\bA]\bI.
\]
Note that $\bH$ is deriven from the variational derivative of free energy:
\[
\CF(\bQ) = \int_{\R^N} \left(\frac{L}{2}|\nabla \bQ|^2 + F(\bQ)\right)\,dx,
\]
which is the one-constant approximation of 
the general Oseen-Frank energy (cf.\cite{BM}),
where the gradient term corresponds to the elastic part of the free energy and 
$L>0$ is the elastic constant.
In our model, we set $L=1$ for simplicity. 
Here and hereafter, we assume that
\[
a, c>0.
\]
The assumption $c>0$ is deriven from a modeling point of view 
to guarantee that the free energy $\CF(\bQ)$ is bounded from below (cf. \cite{M, MZ}). 
On the other hand, at least in the case $a>0$, 
we can prove the existence of global strong solutions
because an eigenvalue of the linear operator corresponding to \eqref{nonlinear0}
does not appear on the positive real axis.

The molecules of nematic liquid crystals flow as in a liquid phase,
but they have long-range orientation order. 
Such a continuum theory for the hydrodynamics of nematic liquid crystals 
was first studied by Ericksen \cite{E1} in 1962. 
Based on his idea, Ericksen and Leslie proposed 
the so called Ericksen-Leslie model \cite{E2, L1, L2} in 1960s, 
in which the evolution of the unit director field is coupled with 
an evolution equation for the underlying flow field 
which is given by the Navier-Stokes equation with an additional forcing term.
However, the Ericksen-Leslie model could not describe the biaxial nematic liquid crystals.
In order to treat the biaxial nematic liquid crystals,
P.G. de Gennes \cite{DG} introduced an $N \times N$ symmetric, traceless matrix $\bQ$ as the new parameter,
which is called $\bQ$-tensor.
After that, Beris and Edwards \cite{BE} proposed a model, 
in which the director field was replaced by a $\bQ$-tensor.

Mathematically the Beris-Edwards model has been studied by many authors recent years.
Concerning weak solutions, 
Paicu and Zarnescu \cite{PZ2} obtained the first result for the simplified model with $\xi= 0$. 
They proved the existence of global weak solutions
in $\R^N$ with $N=2, 3$ as well as weak-strong uniqueness for $N=2$.
Here, $\xi=0$ means that the molecules are such that they only tumble in a shear flow, but are not aligned by such a flow.
In \cite{PZ1}, they proved the same results as in \cite{PZ2} for the case that $\xi$ is sufficiently small. 
An improved result on weak solutions in $\R^2$ was established in \cite{D}.
Huang and Ding \cite{HD} proved the existence of global weak solutions with a more
general energy functional and $\xi=0$ in $\R^3$.
 
On the other hand, concerning strong solutions, 
Abels, Dolzmann, and Liu \cite{Ab1} showed the existence of a strong local solution and 
global weak solutions with higher regularity 
in time in the case of inhomogeneous mixed Dirichlet/Neumann boundary conditions in a bounded domain
without any smallness assumption on the parameter $\xi$. 
Liu and Wang \cite{LW} improved the spatial regularity of solutions obtained in \cite{Ab1}
and generalized their result to the case of anisotropic elastic energy.
Abels, Dolzmann, and Liu \cite{Ab2} also proved the local well posedness with Dirichlet boundary condition 
for the classical Beris-Edwards model, 
which means that fluid viscosity depends on the $\bQ$-tensor, 
but for the case $\xi=0$ only. 
Cavaterra et al. \cite{C} showed the global well posedness in the two dimensional periodic case
without any smallness assumption on the parameter $\xi$. 
Xiao \cite{X} proved the global well posedness for the simplified model with $\xi= 0$ in a bounded domain.
He constructed a solution in the maximal $L_p$-$L_q$ regularity class.

In this paper, we treat the full model described by \eqref{nonlinear0}. Since 
the linear part of 
$\xi[\BH(\BQ+\frac1N\BI)+ (\BQ+\frac1N\BI)\BH$ is $\frac{2\xi}{N}\Delta \BQ$, 
we separate problem \eqref{nonlinear0} into linear part and nonlinear part 
as follows: 
\begin{equation}\label{nonlinear}
\left\{
\begin{aligned}
&\pd_t \bu -\Delta \bu + \nabla \fp +\beta \DV \Delta \bQ = \bff(\bu, \bQ), 
\enskip \dv \bu=0
& \quad&\text{in $\R^N$ for $t \in \BR$}, \\
&\pd_t \bQ - \beta \bD(\bu) -\Delta \bQ + a\left(\bQ - \frac{1}{N} \tr \bQ \bI \right)
=\bg(\bu, \bQ) & \quad&\text{in $\R^N$ for $t \in \BR$}, \\
&(\bu, \bQ) = (\bu_0, \bQ_0)& \quad&\text{in $\R^N$},
\end{aligned}\right.
\end{equation}
where 
\begin{align*}
\beta
&= 2\xi/N,\\
\bff(\bu, \bQ)
&= -\bu \cdot \nabla \bu + 
\DV[2\xi \bH: \bQ(\bQ+\bI/N) - (\xi + 1) \bH\bQ +
 (1-\xi) \bQ\bH -\nabla \bQ \odot \nabla \bQ],\\
\bg(\bu, \bQ)
&= -(\bu \cdot \nabla)\bQ + \xi(\bD(\bu) \bQ + \bQ \bD(\bu))
+W(\bu) \bQ - \bQ W(\bu)\\
&\enskip -2\xi (\bQ +\bI/N) \bQ : \nabla \bu - \CL[\pd F'(\bQ)],\\
F'(\bQ)
&= (b/3)\tr(\bQ^3) + (c/4)(\tr(\bQ^2))^2.
\end{align*}
Recently, Schonbek and the second author in \cite{SS1} considered 
the Beris-Edwards system removed 
$\Delta \bQ$, that is 
$\bH(\BQ+\frac1N\BI)+(\BQ+\frac1N \BI)\BH$ is replaced by 
$\bH(\BQ+\frac1N\BI)-(\BQ+\frac1N \BI)\BH$
in the tensor $\tau(\BQ)$ and proved the global well posedness
for small initial data in the following solution class:
\begin{equation}\label{max.reg.1}\begin{aligned}
\bu &\in \bigcap_{q=q_1, q_2} 
H^1_p((0, T), L_q(\BR^N)^N) \cap L_p((0, T), H^2_q(\BR^N)^N),
\\
 \BQ &\in \bigcap_{q=q_1, q_2} H^1_p((0, T), H^1_q(\BR^N)^{N^2}) \cap 
L_p((0, T), H^3_q(\BR^N)^{N^2}) 
\end{aligned}\end{equation}
with certain $p$, $q_1$, and $q_2$. 
 In \cite{SS1}, the linear part of the first equation in \eqref{nonlinear}
is $\pd_t \bu -\Delta \bu + \nabla \fp$, and so 
$\bu$ part and 
$\BQ$ part  of linearized
equations are essentially separated.  On the other hand,   
in the present paper,  equations for $\bu$ and $\BQ$ are 
completely coupled  by third order term: $\beta \DV \Delta \bQ$.
This is a big difference between \cite{SS1} and the present paper. 

In this paper, the global well posedness for small initial data shall be
proved  in the solution class \eqref{max.reg.1} with the help of  
the maximal regularity theory and $L_p$-$L_q$ decay properties of 
solutions to linearized equations. The spirit to use 
both of them is the same as in \cite{SS1}, 
but the idea how to use them is different, and we think that our approach
here  gives a general framework to prove the global well posedness
for small initial data of quasilinear parabolic equations in unbounded 
domains.  
To explain our idea  more precisely, we write equations as 
 $\pd_t u - Au = f$ and $u|_{t=0}=u_0$ symbolically,
where $u = (\bu, \BQ)$, $f$ is the corresponding nonlinear term and $A$ is a closed linear operator with domain $D(A)
=(W^2_{q_1} \cap W^2_{q_2})\times (W^3_{q_1} \cap W^3_{q_2})$, where $q_1$ and $q_2$ are
some exponents such that $2 < q_1< N < q_2$
chosen precisely in the statement of our main result below.  We use the maximal 
$L_p$-$D(A)$ regularity for the time shifted equations $\pd_t u + \lambda_1u - Au = f$ and $u|_{t=0}=0$, and 
 $L_p$-$L_q$ decay estimates of continuous analytic semigroup $\{e^{At}\}_{t\geq 0}$ associated with the operator $A$
for $t>1$ to prove the decay properties of solutions 
to the compensation equations: $\pd_t v-Av= \lambda_1u$ and $v|_{t=0}=u_0$.
The Duhamel's principle implies that $v = e^{At}u_0 +\lambda_1 \int^t_0 e^{A(t-s)}u(s)\,ds$. 
To estimate $\int^{t-1}_0 e^{A(t-s)}u(s)\,ds$ 
we use the decay properties of $e^{A(t-s)}$ for $t-s > 1$,
and to estimate $\int^t_{t-1}e^{A(t-s)}u(s)\,ds$ we use a standard estimate: 
$\|e^{A(t-s)}u(s)\|_{D(A)} \leq C\|u(s)\|_{D(A)}$ for $0 < t-s < 1$. 
For the later part,  what $u(t) \in D(A)$ for $t>0$ is a key observation.
The key issues of this paper are the maximal $L_p$-$D(A)$ 
maximal regularity of the operator $A$ and 
 the $L_p$-$L_q$ decay properties of $\{e^{At}\}_{t \geq 0}$, both of which are new results and  shall be proved respective 
in Sect. 2 and in Sect. 3 below.  

Before stating the main result of this paper,  
we summarize several symbols and functional spaces used 
throughout the paper.
$\BN$, $\BR$ and $\BC$ denote the sets of 
all natural numbers, real numbers and complex numbers, respectively. 
We set $\BN_0=\BN \cup \{0\}$ and $\BR_+ = (0, \infty)$. 
Let $q'$ be the dual exponent of $q$
defined by $q' = q/(q-1)$
for $1 < q < \infty$. 
For any multi-index $\alpha = (\alpha_1, \ldots, \alpha_N) 
\in \BN_0^N$, we write $|\alpha|=\alpha_1+\cdots+\alpha_N$ 
and $\pd_x^\alpha=\pd_1^{\alpha_1} \cdots \pd_N^{\alpha_N}$ 
with $x = (x_1, \ldots, x_N)$. 
For scalar function $f$, $N$-vector of functions ${\bold g}$, 
and $N \times N$ matrix fields $\bL$, we set
\begin{gather*}
\nabla^k f = (\pd_x^\alpha f \mid |\alpha|=k),
\enskip \nabla^k {\bold g} = (\pd_x^\alpha g_j \mid |\alpha|=k, \enskip j = 1,\ldots, N),\\
\nabla^k \bL = (\pd_x^\alpha L_{j \ell} \mid |\alpha|=k, \enskip j, \ell = 1,\ldots, N).
\end{gather*} 
For Banach spaces $X$ and $Y$, $\CL(X,Y)$ denotes the set of 
all bounded linear operators from $X$ into $Y$,
$\CL(X)$ is the abbreviation of $\CL(X, X)$, and 
$\rm{Hol}\,(U, \CL(X,Y))$ 
 the set of all $\CL(X,Y)$ valued holomorphic 
functions defined on a domain $U$ in $\BC$. 
For any $1 \leq p, q \leq \infty$,
$L_q(\BR^N)$, $W_q^m(\BR^N)$ and $B^s_{q, p}(\BR^N)$ 
denote the usual Lebesgue space, Sobolev space and 
Besov space, 
while $\|\cdot\|_{L_q(\BR^N)}$, $\|\cdot\|_{W_q^m(\BR^N)}$ and 
$\|\cdot\|_{B^s_{q,p}(\BR^N)}$ 
denote their norms, respectively. We set $W^0_q(\BR^N) = L_q(\BR^N)$
and $W^s_q(\BR^N) = B^s_{q,q}(\BR^N)$. 
$C^\infty(\BR^N)$ denotes the set of all $C^\infty$ functions defined on $\BR^N$. 
$L_p((a, b), X)$ and $W_p^m((a, b), X)$ 
denote the standard Lebesgue space and Sobolev space of 
$X$-valued functions defined on an interval $(a,b)$, respectively.
The $d$-product space of $X$ is defined by 
$X^d=\{\bff=(f_1, \ldots, f_d) \mid f_i \in X \, (i=1,\ldots,d)\}$,
while its norm is denoted by 
$\|\cdot\|_X$ instead of $\|\cdot\|_{X^d}$ for the sake of 
simplicity. 
We set 
\begin{gather*}
W_q^{m,\ell}(\BR^N)=\{(\bff,\bg) \mid  \bff \in W_q^m(\BR^N)^N,
\enskip \bg \in W_q^\ell(\BR^N)^{N^2} \}, \enskip 
\|(\bff, \bg)\|_{W^{m, \ell}_q(\BR^N)} = \|\bff\|_{W^m_q(\BR^N)}
+ \|\bg\|_{W^\ell_q(\BR^N)}.
\end{gather*}
Furthermore, we set
\begin{align*}
L_{p, \gamma}(\BR_+, X) & = \{f (t) \in L_{p, {\rm loc}}
(\BR_+, X) \mid e^{-\gamma t} f (t) \in L_p (\BR_+, X)\}, \\
W^1_{p, \gamma}(\BR_+, X) & = \{f(t) \in
 L_{p, \gamma}(\BR_+, X) \mid 
e^{-\gamma t} \pd_t^j f(t) \in L_p(\BR_+, X)
\enskip (j=0, 1)\}
\end{align*}
for $1 < p < \infty$ and $\gamma > 0$.
Let $\CF_x= \CF$ and $\CF^{-1}_\xi = \CF^{-1}$ 
denote the Fourier transform and 
the Fourier inverse transform, respectively, which are defined by 
 setting
$$\hat f (\xi)
= \CF_x[f](\xi) = \int_{\BR^N}e^{-ix\cdot\xi}f(x)\,dx, \quad
\CF^{-1}_\xi[g](x) = \frac{1}{(2\pi)^N}\int_{\BR^N}
e^{ix\cdot\xi}g(\xi)\,d\xi. 
$$
 The letter $C$ denotes generic constants and the constant 
$C_{a,b,\ldots}$ depends on $a,b,\ldots$. 
The values of constants $C$ and $C_{a,b,\ldots}$ 
may change from line to line. We use small boldface letters, e.g. $\bu$ to 
denote vector-valued functions and capital boldface letters, e.g. $\bH$
to denote matrix-valued functions, respectively. 
In order to state our main theorem, 
we introduce spaces
and several norms:
\begin{align}
J_q(\R^N)&=
\{\bu \in L_q(\R^N)^N \mid \dv \bu = 0 \text{ in $\R^N$}\}, \nonumber \\
D_{q, p}(\R^N)&=\{(\bu, \BQ) \mid \bu \in B^{2(1-1/p)}_{q, p}(\R^N)^N \cap J_q(\R^N)), \enskip 
\BQ \in B^{1+2(1-1/p)}_{q, p}(\R^N)^{N^2}\},  \nonumber \\
X_{p, q, t} &= \{(\bu, \bQ) 
 \mid \bu \in 
L_p((0, t), W^2_q(\BR^N)^N) \cap W^1_p((0, t), L_q(\BR^N)^N), \nonumber \\
& \bQ \in L_p((0, t), W^3_q(\BR^N)^{N^2}) \cap W^1_p((0, t), W^1_q(\BR^N)^{N^2})
 \}, 
\nonumber \\
\CN (\bu, \bQ) (T)
&=\sum_{q=q_1, q_2} 
\left(\|<t>^b (\bu, \bQ)\|_{L_\infty((0, T), W^{0, 1}_q(\R^N))}
+ \|<t>^b (\pd_t \bu, \pd_t \bQ)\|_{L_p((0, T), W^{0, 1}_q(\R^N))}\right)
 \nonumber \\
&\enskip +\|<t>^b (\nabla \bu, \nabla \bQ)\|_{L_p((0, T), W^{1, 2}_{q_1}(\R^N))}
+ \|<t>^b (\bu, \bQ)\|_{L_p((0, T), W^{2, 3}_{q_2}(\R^N))}, \label{N}
\end{align}
where $<t>=(1+t^2)^{1/2}$, $b$ is given in Theorem \ref{global} below.

The following theorem is  our main result of this paper.

\begin{thm}\label{global} Assume that $N\geq 3$.
Let $0 < T < \infty$ and $0 < \sigma < 1/2$.
Let $q_1$, $q_2$ and $p$ be numbers such that
\begin{equation}\label{condi:pq}
p= 2+\sigma,
\enskip
q_1=2+\sigma, 
\enskip
\left\{
\begin{aligned}
&q_2 \geq \frac{N(2+\sigma)}{N-(2+\sigma)}& &{\text if~} N=3, 4, \\
&q_2>N& &{\text if~} N\geq 5.
\end{aligned}
\right.
\end{equation}
Let $b=N/(2(2+\sigma))$.
Then, there exists a small number $\epsilon>0$ such that for any initial data 
$(\bu_0, \bQ_0) 
\in \bigcap^2_{i=1} D_{q_i, p} (\BR^N) \cap W^{0, 1}_{q_1/2}(\BR^N)$ with
\[
\CI :=
\sum^2_{i=1}\|(\bu_0, \bQ_0)\|_{D_{q_i, p} (\BR^N)}
+\|(\bu_0, \bQ_0)\|_{W^{0, 1}_{q_1/2}(\BR^N)} < \epsilon^2,
\]
 problem \eqref{nonlinear0} 
admits a unique solution $(\bu, \bQ)$ with
\[
(\bu, \bQ)\in X_{p, q_1, T} \cap X_{p, q_2, T}
\]
satisfying the estimate
\[
\CN(\bu, \bQ)(T)\leq \epsilon.
\]

\end{thm}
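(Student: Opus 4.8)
The plan is to run a fixed-point argument in the metric space
\[
\mathcal{E}_\epsilon = \{(\bu,\bQ) \in X_{p,q_1,T}\cap X_{p,q_2,T} : (\bu,\bQ)|_{t=0}=(\bu_0,\bQ_0),\ \mathcal{N}(\bu,\bQ)(T)\le\epsilon\},
\]
following the strategy sketched in the introduction: split the solution as $(\bu,\bQ) = e^{At}(\bu_0,\bQ_0) + w$ where $w$ solves the time-shifted linear problem with the nonlinearity $(\bff,\bg)$ plus the compensation term $\lambda_1 e^{At}(\bu_0,\bQ_0)$, and apply the fixed-point theorem to the map sending a given $(\bu,\bQ)\in\mathcal{E}_\epsilon$ to the solution of the linear problem with right-hand side $(\bff(\bu,\bQ),\bg(\bu,\bQ))$. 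First I would record the precise linear estimates: the maximal $L_p$-$D(A)$ regularity from Sect.~2 (for the shifted equation $\pd_tv+\lambda_1v-Av=F$, $v|_{t=0}=0$) and the $L_p$-$L_q$ decay bounds for $\{e^{At}\}_{t\ge0}$ from Sect.~3, together with the near-diagonal bound $\|e^{A(t-s)}u(s)\|_{D(A)}\le C\|u(s)\|_{D(A)}$ for $0<t-s<1$. Via Duhamel, $v = e^{At}(\bu_0,\bQ_0) + \lambda_1\int_0^t e^{A(t-s)}u(s)\,ds$, and the integral is split at $t-1$: on $(0,t-1)$ use the decay of $e^{A(t-s)}$ in the $L_q$-operator norm against $\|u(s)\|_{L_{q_1/2}}$-type quantities controlled by $\mathcal{N}$; on $(t-1,t)$ use the short-time bound together with $u(s)\in D(A)$, which is exactly where the maximal-regularity class is used. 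Summing these gives $\mathcal{N}(v)(T)\le C(\mathcal{I}^{1/2} + \mathcal{N}(\bu,\bQ)(T))$ modulo the nonlinear contribution.

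Next I would estimate the nonlinearities. The terms in $\bff$ and $\bg$ are quadratic or higher in $(\bu,\bQ,\nabla\bQ)$ (the genuinely cubic/quartic pieces coming from $F'(\bQ)$, and the worst linear-in-highest-derivative structure being $\bH\bQ$, $\bQ\bH$ type terms with $\bH\sim\Delta\bQ$, hence order-two in $\bQ$ multiplied by $\bQ$ itself). Using Sobolev embeddings in the chosen range $2<q_1<N<q_2$ — here the condition \eqref{condi:pq} on $q_2$ relative to $N$ is what makes $W^1_{q_1}\hookrightarrow L_{q_2}$ and related product estimates work — together with Hölder in space and in time, and absorbing the weight $<t>^b$ by distributing it across the factors (this is where $b=N/(2(2+\sigma))$ and the decay rates from Sect.~3 must match so that the time integrals converge), one shows
\[
\mathcal{N}(\Phi(\bu,\bQ))(T) \le C\bigl(\mathcal{I}^{1/2} + \mathcal{N}(\bu,\bQ)(T)^2 + \mathcal{N}(\bu,\bQ)(T)^3 + \mathcal{N}(\bu,\bQ)(T)^4\bigr),
\]
with $C$ independent of $T$. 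Choosing $\epsilon$ small so that $C(\epsilon^2+\epsilon^2+\epsilon^3+\epsilon^4)\le\epsilon$ and $\mathcal{I}^{1/2}<\epsilon/(2C)$ shows $\Phi$ maps $\mathcal{E}_\epsilon$ into itself; an analogous difference estimate (quadratic in the difference, with a small prefactor) gives contraction, yielding a unique fixed point, which by construction solves \eqref{nonlinear}, hence \eqref{nonlinear0}. Uniqueness in the full class follows by the same difference argument on any two solutions with small norm.

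The main obstacle I anticipate is the third-order coupling term $\beta\,\DV\Delta\bQ$ in the first equation of \eqref{nonlinear}: unlike in \cite{SS1}, the $\bu$- and $\bQ$-parts of the linearized operator $A$ are fully coupled, so both the maximal-regularity result and — more delicately — the $L_p$-$L_q$ decay estimates for $e^{At}$ must be established for the coupled system, checking that no eigenvalue of $A$ touches $\{\mathrm{Re}\,\lambda\ge0\}$ (here the sign assumption $a>0$ enters) and that the symbol of the resolvent admits the right $\BR$-boundedness and low-frequency decay structure despite the mismatch of orders ($\bu$ at order $2$, $\bQ$ at order $3$ with a $\bQ$-lower-order term $a\CL[\bQ]$). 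Granting those two ingredients from Sects.~2–3, the remaining work — splitting Duhamel at $t-1$, bookkeeping the polynomial weight against the decay exponents, and the product/Sobolev estimates for $\bff,\bg$ in the two-exponent scale — is technical but standard; the one point requiring care is verifying that the highest-order nonlinear terms (e.g.\ $\DV(\bH\bQ)$, which costs three derivatives on $\bQ$) are controlled by the $L_p((0,T),W^{3}_{q_2})$ and $L_p((0,T),W^{1,2}_{q_1})$ norms built into $\mathcal{N}$ without loss, which is precisely why the norm $\mathcal{N}$ includes the $W^{2,3}_{q_2}$ piece with its weight.
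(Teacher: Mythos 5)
Your overall structure matches the paper's Section~4: given $(\bu,\bQ)$ in the fixed-point space, solve the linearized problem, split its solution into a time-shifted part $(\bv_1,\bP_1)$ with zero initial data, estimated by the maximal $L_p$--$L_q$ regularity of Theorem~\ref{thm:mr2}, and a compensation part $(\bv_2,\bP_2)$ with data $(\bu_0,\bQ_0)$ and right-hand side $\lambda_1(\bv_1,\bP_1)$, estimated via Duhamel's formula split at $t-1$ using the $L_p$--$L_q$ decay estimates of Theorem~\ref{decay for u} for $t-s>1$ and the analytic-semigroup bound \eqref{conti} for $0<t-s<1$. The identification of the third-order coupling $\beta\DV\Delta\bQ$ as the new difficulty relative to \cite{SS1}, forcing the whole linear analysis (both $\CR$-boundedness and decay) to be redone for the coupled system, is also correct.

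Two points to fix. First, the initial-data contribution enters the iteration estimate \emph{linearly} in $\CI$, not as $\CI^{1/2}$: the decay estimate \eqref{decay est. initial} gives $\CN(e^{\CA t}(\bu_0,\bQ_0))\leq C\CI$, and together with the nonlinear bounds one obtains $\CN(\bv,\bP)(T)\leq C(\CI+\CN(\bu,\bQ)(T)^2)$. With $\CI<\epsilon^2$ and $\CN\leq\epsilon$ this is $\leq 2C\epsilon^2\leq\epsilon$ for small $\epsilon$, which closes. Under the theorem's hypothesis $\CI<\epsilon^2$, your $\CI^{1/2}$ term is only of order $\epsilon$, so $C(\CI^{1/2}+\cdots)\leq\epsilon$ would fail in general; to salvage it you would need to impose $\CI\ll\epsilon^2$, which is not what the statement asserts. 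Second, the paper's underlying set $\CI_{T,\epsilon}$ additionally carries the constraint $\sup_{0<t<T}\|\bQ(\cdot,t)\|_{L_\infty(\R^N)}\leq1$, which tames the cubic and quartic pieces of $\bff$ and $\bg$ so that only $\CN^2$ appears on the right. Your variant of carrying $\CN^3$ and $\CN^4$ explicitly via $W^1_{q_2}\hookrightarrow L_\infty$ ($q_2>N$) also works, but you must then close the loop by showing the image point still satisfies the $L_\infty$ control presupposed in the nonlinear estimates; the paper does exactly this at the end of Section~4 by writing $\bP=\bQ_0+\int_0^t\pd_s\bP\,ds$ and showing $\|\bP\|_{L_\infty}\leq C\epsilon^2\leq1$.
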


\begin{remark} \thetag1~$T>0$ is taken arbitrarily
 and $\epsilon$ is chosen independent of $T$,
therefore, Theorem \ref{global} 
yields the global well posedness for  
\eqref{nonlinear0}.\\
\thetag2~ 
$q_2$ satisfies $q_2 > N$ for any $N \geq 3$ because
$N(2+\sigma)/(N-(2+\sigma)) >N$ when $N=3, 4$.
By \eqref{condi:pq}, we can choose $q_2=15$ when $N=3$,
for instance.
\\
\thetag3~To get a priori estimates,
we need conditions 
$bp' > 1$ and $(N/(2(2+\sigma)) + 1/2 -b)p > 1$
for $0 < \sigma <1/2$.
We also need the condition $bp>1$ in order to estimate nonlinear term 
$<t>^b \bQ^2$ in $L_p((0, T), L_{q_1/2}(\R^N))$. 
Thus, we choose $b=N/(2(2+\sigma))$ and $p=2+\sigma$.
For details see Sect. \ref{proof of main} below. 
In the case $N = 2$, 
our argument does not work because
we have $p'b < 1$ if $N=2$. \\
\thetag4~If initial data $\bQ_0$ is a symmetric and traceless matrix,
 so is a solution $\bQ$, which can be proved 
by the uniqueness of  heat equations.
\end{remark}

This paper is organized as follows:
Sect. \ref{sec:mr}
proves the maximal $L_p$-$L_q$ regularity
by combining the existence of $\CR$-bounded solution operator families
to the resolvent problem and the Weis
operator valued Fourier multiplier theorem.
Sect. \ref{sec:decay} proves the $L_p$-$L_q$ decay estimates to the linearized problem.
Sect. \ref{proof of main} proves the main theorem with the help of 
the maximal $L_p$ -$L_q$ regularity and the $L_p$ -$L_q$ decay estimates.


\section{Maximal $L_p$-$L_q$ regularity}\label{sec:mr}
In this section, we show the maximal $L_p$-$L_q$ regularity 
for problem:
\begin{equation}\label{l0}
\left\{
\begin{aligned}
&\pd_t \bu -\Delta \bu + \nabla \fp +\beta \DV \Delta \bQ = \bff, 
\enskip \dv \bu=0
& \quad&\text{in $\R^N$ for $t>0$}, \\
&\pd_t \bQ - \beta \bD(\bu) -\Delta \bQ + a\left(\bQ - \frac{1}{N} \tr \bQ \bI \right)
=\bg & \quad&\text{in $\R^N$ for $t>0$}, \\
&(\bu, \bQ) = (\bu_0, \bQ_0)& \quad&\text{in $\R^N$}.
\end{aligned}\right.
\end{equation}
We now state the maximal $L_p$-$L_q$ regularity theorem.

\begin{thm}\label{thm:mr}
Let $1 < p, q < \infty$.
Then, there exists a constant $\gamma_1 \geq 1$
such that the following assertion holds:
For any initial data $(\bu_0, \bQ_0) \in D_{q, p} (\BR^N)$
and functions in the right-hand sides 
$(\bff, \bg) \in L_{p, \gamma_1}(\BR_+, W_q^{0, 1}(\BR^N))$,
problem \eqref{l0} admits 
unique solutions $\bu$ and $\bQ$ with
\begin{align*}
&\bu \in W^1_{p, \gamma_1} (\BR_+, L_q(\BR^N)^N) 
\cap L_{p, \gamma_1} (\BR_+, W^2_q(\BR^N)^N),\\
&\bQ \in W^1_{p, \gamma_1} (\BR_+, W^1_q(\BR^N)^{N^2}) 
\cap L_{p, \gamma_1} (\BR_+, W^3_q(\BR^N)^{N^2}), 
\end{align*}
possessing the estimate 
\begin{equation}\label{mr es}
\begin{aligned}
&\|e^{-\gamma t}\pd_t \bu\|_{L_p(\BR_+, L_q(\BR^N))}
+
\|e^{-\gamma t} \bu\|_{L_p(\BR_+, W^2_q(\BR^N))}\\
&+
\|e^{-\gamma t}\pd_t \bQ\|_{L_p(\BR_+, W^1_q(\BR^N))}
+
\|e^{-\gamma t} \bQ\|_{L_p(\BR_+, W^3_q(\BR^N))}\\
&\leq
C_{p, q, N, \gamma_1}
\left(\|(\bu_0, \bQ_0)\|_{D_{q, p} (\BR^N)}
+\|(e^{-\gamma t}\bff, e^{-\gamma t}\bg)\|_{L_p(\BR_+, W^{1, 0}_q(\BR^N))}\right)
\end{aligned}
\end{equation}
for any $\gamma \geq \gamma_1$.
\end{thm}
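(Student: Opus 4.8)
The plan is to reduce Theorem~\ref{thm:mr} to the existence of an $\CR$-bounded family of solution operators for the corresponding generalized resolvent problem, and then invoke the Weis operator-valued Fourier multiplier theorem to transfer $\CR$-boundedness into maximal $L_p$-$L_q$ regularity. Concretely, I would first treat the case of zero initial data $(\bu_0,\bQ_0)=(0,0)$. Applying the Laplace transform in $t$ (with parameter $\lambda=\gamma+i\tau$, $\gamma\ge\gamma_1$) to \eqref{l0} yields the resolvent system
\begin{equation}\label{res-sys}
\left\{
\begin{aligned}
&\lambda \bu -\Delta \bu + \nabla \fp +\beta \DV \Delta \bQ = \bff,
\enskip \dv \bu=0
& \quad&\text{in $\R^N$}, \\
&\lambda \bQ - \beta \bD(\bu) -\Delta \bQ + a\left(\bQ - \tfrac{1}{N} \tr \bQ\, \bI \right)
=\bg & \quad&\text{in $\R^N$}.
\end{aligned}\right.
\end{equation}
The core analytic step is to construct operator families $\CA(\lambda)$, $\CP(\lambda)$, $\CB(\lambda)$, holomorphic in $\lambda$ on a sector $\Sigma_{\epsilon,\gamma_1}=\{\lambda\in\BC\mid |\arg\lambda|<\pi-\epsilon,\ |\lambda|\ge\gamma_1\}$, such that $(\bu,\fp,\bQ)=(\CA(\lambda)(\bff,\bg),\CP(\lambda)(\bff,\bg),\CB(\lambda)(\bff,\bg))$ solves \eqref{res-sys}, together with the $\CR$-boundedness estimates
\[
\CR\bigl(\{(\tau\partial_\tau)^\ell\bigl(\lambda\CA(\lambda),\ \lambda^{1/2}\nabla\CA(\lambda),\ \nabla^2\CA(\lambda)\bigr)\mid\lambda\in\Sigma_{\epsilon,\gamma_1}\}\bigr)\le C,
\]
and similarly for $\CB(\lambda)$ with the appropriate derivative counts (one order higher in space, so $\lambda\nabla\CB$, $\lambda^{1/2}\nabla^2\CB$, $\nabla^3\CB$), for $\ell=0,1$. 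The solution formula is obtained by Fourier transform in $x$: eliminating $\fp$ via the divergence-free condition (so $\fp$ is determined by a Riesz-type operator applied to $\bff$ and the $\DV\Delta\bQ$ term) reduces the system to a $2\times 2$ block system in $(\hat\bu,\hat\bQ)$ whose symbol, after symmetrization, has the schematic form $\bigl(\begin{smallmatrix}\lambda+|\xi|^2 & \ \beta\,(\text{order }|\xi|^3)\\ \beta\,(\text{order }|\xi|) & \ \lambda+|\xi|^2+a\end{smallmatrix}\bigr)$; the key point is that the off-diagonal coupling contributes $\beta^2|\xi|^4$ to the determinant, which is dominated by $(\lambda+|\xi|^2)^2$ for $|\arg\lambda|<\pi-\epsilon$ and $|\lambda|$ large, so the symbol is invertible with the expected decay. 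Then one writes each component of the solution as $\CF^{-1}[m(\xi,\lambda)\hat F]$ with multipliers $m$ that, after the rescaling dictated by the target norm, are bounded and satisfy the Mikhlin-type bounds uniformly in $\lambda\in\Sigma_{\epsilon,\gamma_1}$; the $\CR$-boundedness of the resulting operator families follows from the standard lemma that multiplier operators with uniformly bounded Mikhlin constants form an $\CR$-bounded set (this is the technique systematically used in Shibata's program, e.g.\ via the Volevich trick to handle the top-order terms).

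Granting the $\CR$-bounded solution operators on $\Sigma_{\epsilon,\gamma_1}$, maximal regularity for the zero-initial-data problem follows from the Weis theorem: the operator $\mathcal L$ defined in Fourier-Laplace variables by $\mathcal L F=\CF^{-1}_\tau[(i\tau)\CA(\gamma+i\tau)\CF_t[e^{-\gamma t}F]]$ (and its analogues for the spatial derivatives) is bounded on $L_p(\BR,L_q(\BR^N))$ because its symbol $\lambda\mapsto\lambda\CA(\lambda)$ is an $\CR$-bounded, $C^1$ $\CL(X)$-valued function of $\tau$ on the line $\mathrm{Re}\,\lambda=\gamma$ with $\tau\partial_\tau$ of it also $\CR$-bounded. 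Unwinding the Laplace transform gives that $e^{-\gamma t}\partial_t\bu$, $e^{-\gamma t}\nabla^2\bu$, $e^{-\gamma t}\partial_t\bQ$, $e^{-\gamma t}\nabla\partial_t\bQ$, $e^{-\gamma t}\nabla^3\bQ$ all lie in $L_p(\BR_+,L_q)$ with the bound \eqref{mr es} (the lower-order terms in the $W^2_q$, $W^1_q$, $W^3_q$ norms being absorbed by interpolation and the large choice of $\gamma_1$).

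To remove the zero-initial-data restriction, I would reduce to the previous case by solving an auxiliary problem that carries the initial data. The cleanest route is to use the whole-space heat-type semigroup: since the principal part of \eqref{l0} is a (coupled, but parabolic after the symbol analysis above) second-order-in-$\bu$, "second plus lower" system in $\bQ$, the operator $A$ introduced in the introduction generates an analytic semigroup on the appropriate base space, and $e^{tA}(\bu_0,\bQ_0)$ has the right trace space $D_{q,p}(\BR^N)$ precisely when $(\bu_0,\bQ_0)\in D_{q,p}(\BR^N)$ (this is the standard characterization of the real-interpolation trace space as a Besov space, which is exactly how $D_{q,p}$ was defined). Setting $(\bv,\bR)=(\bu,\bQ)-e^{tA}(\bu_0,\bQ_0)$ reduces to a problem with zero data and right-hand side still in $L_{p,\gamma_1}(\BR_+,W^{0,1}_q)$, to which the previous step applies; adding back the semigroup contribution and using its own maximal-regularity/decay bounds closes \eqref{mr es}. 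Uniqueness follows from the $\CR$-bounded (hence invertible) resolvent on $\Sigma_{\epsilon,\gamma_1}$: a solution of the homogeneous problem has vanishing Laplace transform for $\mathrm{Re}\,\lambda\ge\gamma_1$, hence vanishes.

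The main obstacle I expect is the construction and $\CR$-bound of the solution operator for the coupled resolvent system \eqref{res-sys} — specifically, controlling the third-order coupling term $\beta\DV\Delta\bQ$ together with the first-order term $\beta\bD(\bu)$. One must verify that the determinant of the symbol matrix does not degenerate on the sector (the $\beta^2|\xi|^4$ term is the delicate one: it is genuinely of the same order as the diagonal product $|\xi|^4$, so the argument needs the sign/structure coming from $\bD(\bu)$ paired against $\DV\Delta\bQ$, i.e.\ that the coupling is effectively skew in a way that keeps the determinant comparable to $(\lambda+|\xi|^2)^2$), and then that after extracting the pressure the resulting matrix-valued multipliers, and all their $\xi$- and $\lambda$-derivatives up to the order needed for Mikhlin, obey the scaling bounds uniformly. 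The bookkeeping of which norm of $\bu$ versus $\bQ$ pairs with which power of $\lambda$ (one extra spatial order for $\bQ$ throughout) is where most of the routine but lengthy computation lives; everything downstream is the by-now-standard Weis-multiplier packaging.
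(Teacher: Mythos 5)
Your proposal follows essentially the same route as the paper's own proof: Laplace-transform the zero-extended forcing, apply the $\CR$-bounded solution operator families of Theorem~\ref{thm:Rbdd} together with the Weis operator-valued Fourier multiplier theorem to obtain maximal regularity with zero initial data, lift in the nonzero initial data via the analytic semigroup $e^{\CA t}$ from Theorems~\ref{thm:semi1}--\ref{thm:semi2} and the real-interpolation identification of the trace space $D_{q,p}(\BR^N)$, and conclude uniqueness. The one imprecision in your sketch is that the determinant $P_2(\xi,\lambda)=(\lambda+|\xi|^2)(\lambda+|\xi|^2+a)+\beta^2|\xi|^4$ is \emph{not} dominated by the diagonal product on an arbitrary sector $|\arg\lambda|<\pi-\epsilon$; the paper controls it only on the restricted sector $|\arg\lambda|<\pi-\sigma$ with $\sigma>\sigma_0=\arg(1+i|\beta|)$ when $\beta\neq 0$ (Lemma~\ref{spectrum}), but since you invoke the $\CR$-bounded resolvent family as a black box this does not change the structure of your argument for Theorem~\ref{thm:mr}.
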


\subsection{$\CR$-boundedness of solution operators}
In this  subsection, we analyze the following resolvent problem 
in order to prove Theorem \ref{thm:mr}.

\begin{equation}\label{r}\left\{
\begin{aligned}
&\lambda \bu -\Delta \bu + \nabla \fp +\beta \DV \Delta \bQ = \bff,
\enskip \dv \bu = 0
& \quad&\text{in $\R^N$}, \\
&\lambda \bQ - \beta \bD(\bu) -\Delta \bQ + a\left(\bQ - \frac{1}{N} \tr \bQ \bI \right)
=\bg & \quad&\text{in $\R^N$}.
\end{aligned}\right.
\end{equation}
Here, $\lambda$ is the resolvent parameter varying in a sector
\[
\Sigma_{\epsilon, \lambda_0} 
=\{\lambda \in \C \mid |\arg \lambda| < \pi - \epsilon, 
|\lambda| \geq \lambda_0\} 
\]
for $0 < \epsilon < \pi/2$ and $\lambda_0 \geq 1$.
Moreover, we set an angle
$\sigma_0 \in (0, \pi/2)$ by
\begin{equation}\label{sigma}
\sigma_0 =
\left\{
\begin{aligned}
&0 & \text{if } \beta = 0,\\
&\arg(1 + i|\beta|)& \text{if } \beta \neq 0.
\end{aligned}
\right.
\end{equation}

We introduce
the definition of $\CR$-boundedness of operator families.

\begin{dfn}\label{dfn2}
A family of operators $\CT \subset \CL(X,Y)$ is called $\CR$-bounded 
on $\CL(X,Y)$, if there exist constants $C > 0$ and $p \in [1,\infty)$ 
such that for any $n \in \BN$, $\{T_{j}\}_{j=1}^{n} \subset \CT$,
$\{f_{j}\}_{j=1}^{n} \subset X$ and sequences $\{r_{j}\}_{j=1}^{n}$
 of independent, symmetric, $\{-1,1\}$-valued random variables on $[0,1]$, 
we have  the inequality:
$$
\bigg \{ \int_{0}^{1} \|\sum_{j=1}^{n} r_{j}(u)T_{j}f_{j}\|_{Y}^{p}\,du
 \bigg \}^{1/p} \leq C\bigg\{\int^1_0
\|\sum_{j=1}^n r_j(u)f_j\|_X^p\,du\biggr\}^{1/p}.
$$ 
The smallest such $C$ is called $\CR$-bound of $\CT$, 
which is denoted by $\CR_{\CL(X,Y)}(\CT)$.
\end{dfn}
The following theorem is the main result of this subsection.

\begin{thm}\label{thm:Rbdd}
Let $1 < q < \infty$.
Then, 
for any $\sigma \in (\sigma_0, \pi/2)$,
there exist a positive constant $\lambda_0=\lambda_0(\sigma) \geq 1$
and operator families 
\begin{align*}
&\CA (\lambda) \in 
{\rm Hol} (\Sigma_{\sigma, \lambda_0}, 
\CL(W^{0, 1}_q(\R^N), W^2_q(\R^N)^N))\\
&\CB (\lambda) \in 
{\rm Hol} (\Sigma_{\sigma, \lambda_0}, 
\CL(W^{0, 1}_q(\R^N), W^3_q(\R^N)^{N^2}))
\end{align*}
such that 
for any $\lambda = \gamma + i\tau \in \Sigma_{\sigma, \lambda_0}$
, $\bff \in L_q(\BR^N)^N$, and $\bg \in W^1_q(\R^N)^{N^2}$, 
\begin{equation*}
\bu = \CA (\lambda) (\bff, \bg), \quad
\bQ = \CB (\lambda) (\bff, \bg)
\end{equation*}
are unique solutions of problem \eqref{r},
and 
\begin{align}
&\CR_{\CL(W^{0, 1}_q(\R^N), A_q(\R^N))}
(\{(\tau \pd_\tau)^n \CS_\lambda \CA (\lambda) \mid 
\lambda \in \Sigma_{\sigma, 0}\}) 
\leq r_{N, q}, \label{rbdd u}\\
&\CR_{\CL(W^{0, 1}_q(\R^N), B_q(\R^N))}
(\{(\tau \pd_\tau)^n \CT_\lambda \CB (\lambda) \mid 
\lambda \in \Sigma_{\sigma, \lambda_0}\}) 
\leq r_{N, q} \label{rbdd q}
\end{align}
for $n = 0, 1,$
where 
$\CS_\lambda \bu = (\nabla^2 \bu, \lambda^{1/2}\nabla \bu, \lambda \bu)$,
$\CT_\lambda \bQ = (\nabla^3 \bQ, \lambda^{1/2}\nabla^2 \bQ, \lambda \bQ)$,
$A_q(\BR^N) = L_q(\BR^N)^{N^3 + N^2+N}$,
$B_q(\BR^N) = L_q(\BR^N)^{N^5 + N^4} \times W^1_q(\BR^N)^{N^2}$,
and $r_{N, q}$ is a constant independent of $\lambda$.
\end{thm}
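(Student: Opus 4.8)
The plan is to solve the resolvent problem \eqref{r} explicitly by Fourier transform, construct the solution operators $\CA(\lambda)$ and $\CB(\lambda)$ as Fourier multipliers, and then verify the $\CR$-bound estimates \eqref{rbdd u}--\eqref{rbdd q} by checking that the associated symbols satisfy the Mikhlin-type condition under which the multiplier theorem of Weis (combined with the Fourier-multiplier characterization of $\CR$-boundedness) applies. First I would apply $\CF_x$ to \eqref{r}. For the velocity equation, taking the divergence and using $\dv\bu=0$ gives a formula for $\hat\fp$ in terms of $\hat\bff$ and $\hat\bQ$; substituting this back yields $(\lambda+|\xi|^2)\hat\bu = P(\xi)\hat\bff - \beta\,(\text{third-order symbol in }\xi)\,\hat\bQ$, where $P(\xi)=I-\xi\xi^T/|\xi|^2$ is the Fourier symbol of the Helmholtz projection. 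For the $\bQ$-equation, $\bD(\bu)$ contributes a first-order symbol in $\xi$ acting on $\hat\bu$, and the zeroth-order term $a(\bQ-\frac1N\tr\bQ\,\bI)$ contributes a bounded symbol. Eliminating $\hat\bu$ between the two equations produces, componentwise, a scalar-type equation for $\hat\bQ$ of the form
\begin{equation*}
\Big[(\lambda+|\xi|^2+a)(\lambda+|\xi|^2) - c_0\,\beta^2|\xi|^2(\lambda+|\xi|^2)\,(\text{bounded factor})\Big]\hat\bQ = (\text{r.h.s.}),
\end{equation*}
so the key scalar multiplier denominators are $\lambda+|\xi|^2$, $\lambda+|\xi|^2+a$, and the ``coupled'' factor $(\lambda+|\xi|^2)^2 + \beta^2|\xi|^2(\lambda+|\xi|^2)/N \sim (\lambda+|\xi|^2)\big((1+i\beta/\sqrt N\,?)\big)$ — more precisely a factor like $\lambda + (1+c\beta^2)|\xi|^2$ type, which is why the sector angle $\sigma_0$ in \eqref{sigma} is defined through $\arg(1+i|\beta|)$: the spectrum of the coupled symbol sits in a rotated sector, and one needs $\sigma>\sigma_0$ so that on $\Sigma_{\sigma,\lambda_0}$ the denominator stays bounded away from zero.

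The second step is the lower bound on the denominators. For $\lambda\in\Sigma_{\sigma,\lambda_0}$ with $\sigma>\sigma_0$ one shows $|\lambda+|\xi|^2|\geq c(|\lambda|+|\xi|^2)$ and, for the coupled factor, an estimate of the same shape $\geq c(|\lambda|+|\xi|^2)$ after choosing $\lambda_0$ large (depending on $a$, $\beta$, $\sigma$); this is where the restriction to $\lambda\in\Sigma_{\sigma,\lambda_0}$ rather than $\Sigma_{\sigma,0}$ is used for $\CB(\lambda)$, while for $\CA(\lambda)$ — whose denominator is only $\lambda+|\xi|^2$ — the estimate already holds on $\Sigma_{\sigma,0}$, matching the statement of \eqref{rbdd u} versus \eqref{rbdd q}. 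Once these lower bounds are in hand, one writes each entry of $\CS_\lambda\CA(\lambda)$ and $\CT_\lambda\CB(\lambda)$ as $\CF^{-1}[m(\lambda,\xi)\hat g]$ with $m$ a finite sum of symbols of the form
\begin{equation*}
m(\lambda,\xi) = \frac{(\text{monomial in }\xi,\ \lambda^{1/2},\ \lambda)\cdot(\text{bounded tensor factor})}{(\lambda+|\xi|^2)^{k}(\lambda+|\xi|^2+a)^{\ell}(\text{coupled factor})^{j}},
\end{equation*}
where the numerator degree exactly matches the denominator degree so that $m$ is homogeneous of degree $0$ in $(\lambda^{1/2},\xi)$ jointly. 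The third step is then to verify the pointwise Mikhlin bounds $|\pd_\xi^\alpha m(\lambda,\xi)|\leq C_\alpha |\xi|^{-|\alpha|}$ uniformly for $\lambda\in\Sigma_{\sigma,\lambda_0}$ (resp.\ $\Sigma_{\sigma,0}$), and likewise for $(\tau\pd_\tau)^n m$ for $n=0,1$; this is a routine but bookkeeping-heavy differentiation using the lower bounds from step two and the homogeneity. By the standard lemma (e.g.\ Enomoto--Shibata / Denk--Hieber--Prüss) that such a uniform Mikhlin family is $\CR$-bounded on $\CL(L_q)$ with $\CR$-bound controlled by the constants $C_\alpha$, one gets \eqref{rbdd u}--\eqref{rbdd q}; the extra $W^1_q$ (rather than $L_q$) regularity recorded for $\bg$ and for the target of $\CB$ is handled by noting the $\bg$-to-$\bQ$ part of the symbol gains one derivative, or equivalently by commuting $\nabla$ through the multiplier. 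Uniqueness of solutions follows because the homogeneous problem forces $\hat\bu=\hat\bQ=0$ a.e.\ once $\lambda$ avoids the (finitely many) zeros of the denominators, i.e.\ for $\lambda\in\Sigma_{\sigma,\lambda_0}$.

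The main obstacle I expect is controlling the coupled third-order term $\beta\,\DV\Delta\bQ$ and the resulting ``coupled factor'' in the denominator: one must show that its zero set, as a function of $\xi$ for fixed $\lambda$, stays off $\R^N$ precisely when $|\arg\lambda|<\pi-\sigma$ with $\sigma>\sigma_0$, and that the corresponding symbols — which carry a full third derivative on $\bQ$ balanced against only $\lambda$ and $|\xi|^2$ in a product that is cubic, not quadratic, in $\xi$ — are genuinely degree-zero homogeneous in $(\lambda^{1/2},\xi)$. The delicate point is the half-power $\lambda^{1/2}$ scaling: the cross term feeding $\bu$ into $\bQ$ via $\bD(\bu)$ is first order in $\xi$, so the weight $\lambda^{1/2}\nabla^2\bQ$ in $\CT_\lambda$ and $\lambda^{1/2}\nabla\bu$ in $\CS_\lambda$ must be matched carefully against the mixed $\lambda$-and-$\xi$ powers coming out of the elimination; getting the homogeneity count exactly right (so that no symbol has a net positive power of $|\xi|$ or of $|\lambda|$ left over) is the crux, and it is exactly what forces the particular combination of spaces and weights in the statement. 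Everything else — the Helmholtz projection bookkeeping, the zeroth-order $a$-term, extracting pressure — is standard and parallels the Stokes resolvent analysis.
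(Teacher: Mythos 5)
Your overall plan matches the paper's proof exactly: Fourier transform \eqref{r}, eliminate $\fp$ via the divergence condition, eliminate $\bQ$ via the second equation, obtain explicit multiplier formulas for $\CA(\lambda)$ and $\CB(\lambda)$, verify pointwise Mikhlin bounds on the symbols and on $(\tau\pd_\tau)^n$ of them, and invoke the Enomoto--Shibata/Denk--Hieber--Pr\"uss style lemma (Lemma \ref{kernel}) to convert those bounds into $\CR$-bounds. Two details of your account, however, would need to be corrected to carry the argument out. First, the coupled denominator you guess, $(\lambda+|\xi|^2)^2+\beta^2|\xi|^2(\lambda+|\xi|^2)/N$, is not what the elimination produces; the third-order term $\beta(\DV\Delta\bQ-\nabla\dv\DV\bQ)$ combined with $\beta\bD(\bu)$ contributes $\beta^2\Delta^2\bu$, so the true polynomial is $P_2(\xi,\lambda)=(\lambda+|\xi|^2)(\lambda+|\xi|^2+a)+\beta^2|\xi|^4$. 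Its roots behave as $\lambda_\pm\sim-(1\pm i|\beta|)|\xi|^2$ at high frequency, which is exactly where the angle $\sigma_0=\arg(1+i|\beta|)$ enters; the low-frequency roots are $-|\xi|^2+O(|\xi|^4)$ and $-|\xi|^2-a+O(|\xi|^4)$, and one must carry both expansions through the estimates. Second, your explanation for why $\CA$ is $\CR$-bounded on $\Sigma_{\sigma,0}$ but $\CB$ only on $\Sigma_{\sigma,\lambda_0}$ is off: the denominator of $\CA(\lambda)$ is also $P_2$, not $\lambda+|\xi|^2$, and by Lemma \ref{spectrum}\thetag3 the lower bound $|P_2(\xi,\lambda)|\gtrsim(|\lambda|^{1/2}+|\xi|)^4$ already holds on all of $\Sigma_{\sigma,0}$ once $\sigma>\sigma_0$, so the coupled factor is not what forces the restriction. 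The restriction on $\CB$ comes from the derivative gain: the $\bg\to\bQ$ symbols of $\lambda\CB(\lambda)$ are of the type $|\pd_\xi^\alpha m|\lesssim(|\lambda|^{1/2}+|\xi|)^{-1}|\xi|^{-|\alpha|}$, which realize the map $L_q\to W^1_q$, and Lemma \ref{kernel}\thetag3 for that symbol class gives an $\CR$-bound only on $\Sigma_{\sigma,\delta}$ with $\delta>0$. Aside from these two misidentifications the proposal is the same route as the paper and, once corrected, closes the argument; the homogeneity bookkeeping you flag as the crux is handled exactly by the two preliminary Lemmas \ref{spectrum} and \ref{lem:spectrum2}.
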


Postponing the proof of Theorem \ref{thm:Rbdd}, 
we are concerned with time dependent problem 
\eqref{l0}.
Set
\[
X_q(\R^N)= J_q(\R^N) \times W^1_q(\R^N)^{N^2}.
\]
Let $\CA$ be a linear operator defined by 
\begin{equation}\label{op}
\CA (\bu, \bQ) 
= \left(
 P\Delta \bu - \beta P(\DV \Delta \bQ), 
\beta \bD(\bu) + \Delta \bQ - a \left(\bQ + \frac{1}{N}\tr \bQ \bI\right)
\right)
\end{equation}
for $(\bu, \bQ) \in D(\CA)$,
where $P$ denotes solenoidal projection
and
\[
D(\CA) = (W^2_q(\R^N)^N \cap J_q(\R^N)) \times W^3_q(\R^N)^{N^2}
.\]
Since Definition \ref{dfn2} with $n = 1$ 
implies the uniform boundedness
of the operator family $\CT$, 
 solutions
$\bu$ and $\bQ$ of equations \eqref{r} satisfy 
the resolvent estimate:
\begin{equation}\label{resolvent es}
|\lambda|\|(\bu, \bQ)\|_{W_q^{0, 1}(\BR^N)} 
+|\lambda|^{1/2} \|(\nabla \bu, \nabla^2 \bQ)\|_{L_q(\BR^N)}
+ \|(\bu, \bQ)\|_{W^{2, 3}_q(\BR^N)}
\leq
C_{r_{N, q}}
\|(\bff, \bg)\|_{W^{0, 1}_q(\BR^N)}
\end{equation}
for any $\lambda \in \Sigma_{\sigma, \lambda_0}$ 
and $(\bff, \bg) \in X_q(\BR^N)$.
By \eqref{resolvent es}, we have the following theorem.

\begin{thm}\label{thm:semi1}
Let $1 < q < \infty$.
Then, the operator $\CA$ generates an analytic
semigroup $\{e^{\CA t}\}_{t\geq 0}$ on $X_q(\BR^N)$.  
Moreover, there exist
constants $\gamma_1 \geq 1$ and $C_{q, N, \gamma_1} > 0$
such that $\{e^{\CA t}\}_{t\geq 0}$ satisfies the estimates: 
\begin{align*}
\|e^{\CA t} (\bu_0, \bQ_0) \|_{X_q(\BR^N)}
&\leq C_{q, N, \gamma_1} e^{\gamma_1 t} \|(\bu_0, \bQ_0)\|_{X_q(\BR^N)},\\
\|\pd_t e^{\CA t} (\bu_0, \bQ_0) \|_{X_q(\BR^N)}
&\leq C_{q, N, \gamma_1} e^{\gamma_1 t} t^{-1} \|(\bu_0, \bQ_0)\|_{X_q(\BR^N)},\\
\|\pd_t e^{\CA t} (\bu_0, \bQ_0) \|_{X_q(\BR^N)}
&\leq C_{q, N, \gamma_1} e^{\gamma_1 t} \|(\bu_0, \bQ_0)\|
_{D(\CA) (\BR^N)}
\end{align*}
for any $t > 0$.
\end{thm}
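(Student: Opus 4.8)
\textbf{Proof proposal for Theorem \ref{thm:semi1}.}

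The plan is to derive the generation of the analytic semigroup and the three estimates entirely from the resolvent estimate \eqref{resolvent es} proved via Theorem \ref{thm:Rbdd}, together with standard analytic semigroup theory. First I would observe that \eqref{resolvent es} in particular gives, for $\lambda \in \Sigma_{\sigma,\lambda_0}$,
\[
|\lambda|\,\|(\bu,\bQ)\|_{X_q(\BR^N)} \leq C\,\|(\bff,\bg)\|_{X_q(\BR^N)},
\]
so that $(\lambda I - \CA)^{-1}$ exists as a bounded operator on $X_q(\BR^N)$ for $\lambda$ in the sector $\Sigma_{\sigma,\lambda_0}$ with $\|(\lambda I-\CA)^{-1}\|_{\CL(X_q)} \leq C/|\lambda|$. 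Here one must check that the pair $(\bu,\bQ)$ produced by $\CA(\lambda),\CB(\lambda)$ actually solves the abstract equation $\lambda(\bu,\bQ)-\CA(\bu,\bQ)=(\bff,\bg)$ in $X_q(\BR^N)$: applying the solenoidal projection $P$ to the first line of \eqref{r} absorbs $\nabla\fp$ and turns the $\bu$-equation into $\lambda\bu - P\Delta\bu + \beta P(\DV\Delta\bQ)=P\bff = \bff$ on $J_q(\BR^N)$, while the second line is already in the required form; thus $(\lambda I - \CA)$ is bijective from $D(\CA)$ onto $X_q(\BR^N)$ with the stated bound. Shifting by $\gamma_1$, i.e. replacing $\CA$ by $\CA - \gamma_1 I$ with $\gamma_1$ chosen so that $\{\lambda : \operatorname{Re}\lambda \geq 0\} \subset \Sigma_{\sigma,\lambda_0} + \gamma_1$, we get a sectorial operator of angle less than $\pi/2$, hence $\CA$ generates an analytic semigroup $\{e^{\CA t}\}_{t\geq 0}$ on $X_q(\BR^N)$ satisfying $\|e^{\CA t}\|_{\CL(X_q)} \leq C e^{\gamma_1 t}$, which is the first estimate.

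For the second estimate I would use the standard smoothing property of analytic semigroups: $\|\CA e^{\CA t}\|_{\CL(X_q)} \leq C t^{-1}$ for $0 < t \leq 1$, obtained from the Dunford integral representation $\CA e^{\CA t} = \frac{1}{2\pi i}\int_\Gamma \lambda e^{\lambda t}(\lambda I - \CA)^{-1}\,d\lambda$ over a suitable sectorial contour $\Gamma$, combined with the resolvent bound; for $t \geq 1$ one absorbs the $t^{-1}$ into the exponential $e^{\gamma_1 t}$ at the cost of enlarging the constant. Since $\pd_t e^{\CA t}(\bu_0,\bQ_0) = \CA e^{\CA t}(\bu_0,\bQ_0)$, this yields $\|\pd_t e^{\CA t}(\bu_0,\bQ_0)\|_{X_q} \leq C e^{\gamma_1 t} t^{-1}\|(\bu_0,\bQ_0)\|_{X_q}$. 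For the third estimate, when $(\bu_0,\bQ_0) \in D(\CA)$ one commutes $\CA$ through the semigroup: $\pd_t e^{\CA t}(\bu_0,\bQ_0) = e^{\CA t}\CA(\bu_0,\bQ_0)$, and then the first estimate applied to the datum $\CA(\bu_0,\bQ_0) \in X_q(\BR^N)$ gives the bound $C e^{\gamma_1 t}\|(\bu_0,\bQ_0)\|_{D(\CA)}$, using $\|\CA(\bu_0,\bQ_0)\|_{X_q} \leq C\|(\bu_0,\bQ_0)\|_{D(\CA)}$ which is immediate from the definition \eqref{op} and the fact that $P$ is bounded on $L_q(\BR^N)^N$.

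The main obstacle, and the only point requiring genuine care rather than citation of textbook semigroup theory, is justifying that the resolvent set of $\CA$ (acting in $X_q(\BR^N)$ with domain $D(\CA)$) really contains the full sector $\Sigma_{\sigma,\lambda_0}$ with the claimed uniform bound. This requires: (i) that the solution operators $\CA(\lambda),\CB(\lambda)$ of Theorem \ref{thm:Rbdd}, which are constructed for data $(\bff,\bg) \in L_q(\BR^N)^N \times W^1_q(\BR^N)^{N^2}$, restrict correctly to data in $X_q(\BR^N) = J_q(\BR^N) \times W^1_q(\BR^N)^{N^2}$ and land in $D(\CA)$ after projection; (ii) uniqueness, which is already asserted in Theorem \ref{thm:Rbdd}; and (iii) the pressure elimination: one must verify that for $\bff \in J_q(\BR^N)$ the associated $\fp$ is (up to constants) recovered consistently so that $P\bff=\bff$ and the abstract equation is equivalent to \eqref{r}. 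Once these identifications are in place, the $\CR$-boundedness with $n=1$ in Theorem \ref{thm:Rbdd} — in particular the uniform boundedness of $\{\lambda(\lambda I-\CA)^{-1}\}$ it entails, as already noted in the text preceding \eqref{resolvent es} — closes the argument, and everything else is the classical passage from a sectorial resolvent estimate to analytic semigroup generation and its smoothing estimates.
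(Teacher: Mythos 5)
Your proposal is correct and follows the same route the paper takes: the paper does not display a separate proof of Theorem~\ref{thm:semi1} but simply notes that the uniform sectorial resolvent bound \eqref{resolvent es} (itself a consequence of the $\CR$-boundedness in Theorem~\ref{thm:Rbdd}, since $\CR$-boundedness with a single operator gives ordinary uniform boundedness) implies generation of an analytic semigroup and the standard smoothing estimates, which is exactly what you spell out. One small wording caution: it is $\CR$-boundedness at derivative order $n=0$ in Theorem~\ref{thm:Rbdd} that already yields the uniform operator-norm bound — the "$n=1$" referenced near \eqref{resolvent es} is the $n$ from Definition~\ref{dfn2}, not the $(\tau\pd_\tau)^n$ index — but this is cosmetic and does not affect your argument.
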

Combining Theorem \ref{thm:semi1} with a real interpolation method
(cf. Shibata and Shimizu \cite[Proof of Theorem 3.9]{SS2}), we have
the following result for equations \eqref{l0} 
with $(\bff, \bg) = (0, O)$.

\begin{thm}\label{thm:semi2}
Let $1 < p, q < \infty$.  
Then, for any $(\bu_0, \bQ_0) \in 
D_{q, p} (\BR^N)$, 
problem \eqref{l0} with $(\bff, \bg) = (0, O)$
admits a unique solution $(\bu, \bQ) = e^{\CA t} (\bu_0, \bQ_0)$
possessing the estimate:
\begin{equation}\label{semi2}
\begin{aligned}
&\|e^{-\gamma t} \pd_t \bu\|_{L_p(\R_+, L_q(\BR^N))}
+ \|e^{-\gamma t}\bu\|_{L_p(\R_+, W^2_q(\BR^N))}\\
&+ \|e^{-\gamma t}\pd_t \bQ\|_{L_p(\R_+, W^1_q(\BR^N))}
+ \|e^{-\gamma t}\bQ\|_{L_p(\R_+, W^3_q(\BR^N))}\\
&
\leq C_{p, q, N, \gamma_1} 
\|(\bu_0, \bQ_0)\|
_{D_{q, p} (\BR^N)}
\end{aligned}
\end{equation}
for any $\gamma \geq \gamma_1$.

\end{thm}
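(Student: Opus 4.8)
The plan is to derive Theorem~\ref{thm:semi2} from Theorem~\ref{thm:semi1} by the standard real-interpolation argument that characterizes the trace space of maximal $L_p$ regularity. First I would recall that once $\CA$ generates an analytic semigroup on $X_q(\BR^N)$ with the estimates of Theorem~\ref{thm:semi1}, the solution of \eqref{l0} with $(\bff,\bg)=(0,O)$ is uniquely given by $(\bu,\bQ)=e^{\CA t}(\bu_0,\bQ_0)$; uniqueness is immediate from the semigroup property, so the content is the norm estimate \eqref{semi2}. The key point is that the norm on the left of \eqref{semi2}, after multiplication by $e^{-\gamma t}$ with $\gamma\ge\gamma_1$, is comparable to $\|(\bu_0,\bQ_0)\|_{(X_q(\BR^N),\,D(\CA))_{1-1/p,p}}$, the real interpolation space between the base space and the domain of the generator. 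This is the classical trace theorem for parabolic equations (see e.g. Shibata--Shimizu \cite{SS2}): for an analytic semigroup, $e^{-\gamma t}\pd_t e^{\CA t}v_0\in L_p(\BR_+,X_q)$ together with $e^{-\gamma t}e^{\CA t}v_0\in L_p(\BR_+,D(\CA))$ if and only if $v_0\in(X_q,D(\CA))_{1-1/p,p}$, with two-sided norm equivalence depending only on $p,q,N,\gamma_1$.

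The remaining step is then to identify the interpolation space $(X_q(\BR^N),D(\CA))_{1-1/p,p}$ with $D_{q,p}(\BR^N)$. Here I would use that $X_q(\BR^N)=J_q(\BR^N)\times W^1_q(\BR^N)^{N^2}$ and $D(\CA)=(W^2_q(\BR^N)^N\cap J_q(\BR^N))\times W^3_q(\BR^N)^{N^2}$ split as products, so the interpolation space is the product of the componentwise interpolation spaces. For the $\bQ$-component, $(W^1_q,W^3_q)_{1-1/p,p}=B^{1+2(1-1/p)}_{q,p}$ by the standard interpolation identity for Sobolev and Besov scales. For the $\bu$-component one has $(J_q, W^2_q\cap J_q)_{1-1/p,p}=B^{2(1-1/p)}_{q,p}\cap J_q$; this uses that the solenoidal projection $P$ is bounded on all the relevant spaces, so that $J_q$ is a complemented subspace and interpolation commutes with the projection. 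Combining these gives exactly $D_{q,p}(\BR^N)$ as defined in the excerpt, and hence \eqref{semi2}.

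The main obstacle I anticipate is bookkeeping rather than conceptual: one must be careful that the trace characterization is the \emph{homogeneous-in-time} version adapted to the shifted space $L_{p,\gamma}$, i.e. that the semigroup estimates of Theorem~\ref{thm:semi1} (which contain the exponential weight $e^{\gamma_1 t}$ and the singular bound $t^{-1}$ near $t=0$) are precisely what is needed to run the interpolation argument on $\BR_+$ with weight $e^{-\gamma t}$ for $\gamma\ge\gamma_1$. Concretely, the $t^{-1}$ bound controls the behavior near $t=0$ and produces the real-interpolation norm, while the $e^{\gamma_1 t}$ growth is absorbed by the weight $e^{-\gamma t}$ so that all integrals over $\BR_+$ converge; one checks this by splitting $\int_0^\infty = \int_0^1 + \int_1^\infty$ and using the third estimate of Theorem~\ref{thm:semi1} together with the density of $D(\CA)$ to justify the a priori bound. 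Since this is exactly the situation treated in \cite[Proof of Theorem 3.9]{SS2}, I would cite that argument and only indicate the product-space decomposition and the two interpolation identities above as the new (minor) verifications.
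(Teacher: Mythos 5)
Your proposal is correct and follows essentially the same route as the paper, which simply invokes Theorem~\ref{thm:semi1} together with the real-interpolation trace argument of \cite[Proof of Theorem 3.9]{SS2}. The details you spell out --- the identification of the trace space as $(X_q,D(\CA))_{1-1/p,p}$, the componentwise interpolation via the boundedness of the Helmholtz projection to get $D_{q,p}(\BR^N)$, and the splitting $\int_0^1+\int_1^\infty$ with the $t^{-1}$ singular bound near $t=0$ and the exponential weight $e^{-\gamma t}$ at infinity --- are precisely what the cited argument carries out.
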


The remaining part of this subsection
is devoted to proving Theorem \ref{thm:Rbdd}. 
For this purpose, we first calculate a solution formula of \eqref{r}.
Taking divergence of the first equation of \eqref{r},
we have 
\begin{equation}\label{p}
\fp=-\beta \dv \DV \bQ + \Delta^{-1}\dv \bff.
\end{equation}
Inserting \eqref{p} into the first equation of \eqref{r},
we have
\begin{equation}\label{uq1}
(\lambda - \Delta)\bu + \beta (\DV \Delta \bQ - \nabla \dv \DV \bQ)
= \bff -\nabla \Delta^{-1}\dv \bff.
\end{equation}
Let $\bold{g} = \bff -\nabla \Delta^{-1}\dv \bff.$
Substituting the formula:
$\tr \bQ = (\lambda - \Delta)^{-1} \tr \bg$,
which obtained by the trace of the second equation of \eqref{r},
into the second equation of \eqref{r} yields
\begin{equation*}\label{uq2}
(\lambda - (\Delta - a))\bQ - \beta \bD(\bu)
=\bg + \frac{a}{N}(\lambda - \Delta)^{-1}\tr \bg \bI.
\end{equation*} 
Thus, setting $\bH = \bg + a(\lambda - \Delta)^{-1}\tr \bg \bI/N$,
we have 
\begin{equation}\label{uq2}
(\lambda - (\Delta - a)) \bQ = \beta \bD(\bu) + \bH.
\end{equation}
Thanks to \eqref{uq2}, $\bQ$ can be represented by $\bu$;
therefore, we first consider a solution formula for $\bu$ below.
Applying $(\lambda - (\Delta - a))$ to \eqref{uq1},
we have
\begin{equation}\label{uq3}
(\lambda - \Delta)(\lambda - (\Delta - a))\bu
+ \beta(\lambda - (\Delta - a)) (\DV \Delta \bQ - \nabla \dv \DV \bQ)
= (\lambda - (\Delta - a))\bold{g}.
\end{equation}
By \eqref{uq2} and $\dv \bu =0$, the second term of \eqref{uq3} can be calculated as follows:
\[
\beta(\lambda - (\Delta - a)) (\DV \Delta \bQ - \nabla \dv \DV \bQ)
=\beta^2 \Delta^2 \bu + \beta(\DV \Delta \bH - \nabla \dv \DV \bH),
\]
so that \eqref{uq3} is equivalent to
\[
P_2(\lambda) \bu = (\lambda - (\Delta - a))\bold{g} - \beta(\DV \Delta \bH - \nabla \dv \DV \bH),
\]
where 
$P_2(\lambda) = (\lambda - \Delta) (\lambda - (\Delta - a)) + \beta^2 \Delta^2$.
Noting that $\DV \Delta \bH - \nabla \dv \DV \bH = \DV \Delta \bg - \nabla \dv \DV \bg$,
we have
\begin{equation}\label{u}
\bu = P_2(\lambda)^{-1}\{(\lambda - (\Delta - a))(\bff - \nabla \Delta^{-1} \dv\bff) 
- \beta(\DV \Delta \bg - \nabla \dv \DV \bg)\}.
\end{equation}
Thus $\bu=(u_1,\dots, u_N)$ has form:
\begin{equation}\label{u1}
\begin{aligned}
u_j &= 
\CA_j(\lambda)(\bff, \bg)
\end{aligned}\end{equation}
with 
\begin{equation}\label{sol.op.u}\begin{aligned}
\CA_j(\lambda)(\bff, \bg)
&= \CF^{-1} \left[ \frac{\lambda + |\xi|^2 + a}{P_2(\xi, \lambda)} 
\left(\hat{f}_j - \frac{\xi_j}{|\xi|^2} \xi \cdot \hat{\bff} \right)\right] \nonumber \\
& \enskip 
- \CF^{-1} \left[ \frac{\beta}{P_2(\xi, \lambda)}
\left(\sum^N_{\ell=1} i\xi_\ell |\xi|^2 \hat{G}_{j\ell} + i \xi_j \sum^N_{\ell, m=1} 
\xi_\ell \xi_m \hat{G}_{\ell m}\right)\right],
\end{aligned}
\end{equation}
where
\begin{equation}\label{p2}
P_2(\xi,  \lambda) = (\lambda + |\xi|^2) (\lambda + |\xi|^2 + a) + \beta^2|\xi|^4. 
\end{equation}
Moreover, using \eqref{uq2}, $\bQ$ with $(j, k)$ component
$Q_{jk}$ is represented as follows:
\begin{equation}\label{q1}
\begin{aligned}
Q_{jk} &= \CF^{-1} \left[\frac{\beta}{\lambda 
+ |\xi|^2+a} (i\xi_j \hat{u}_k + i\xi_k \hat{u}_j)\right]
+ \CF^{-1} \left[\frac{1}{\lambda + |\xi|^2 + a} \hat{G}_{jk} \right]
+\frac{a}{N} \CF^{-1} \left[\frac{1}{P_1(\xi, \lambda)} \tr \hat{\bg} \delta_{jk} \right]\\
&
= \CB_{jk}(\lambda)(\bff, \bg)
\end{aligned}\end{equation} 
with
\begin{equation}\label{sol.op.q}\begin{aligned}
\CB_{jk}(\lambda)(\bff, \bg)
&
= \CF^{-1} \left[\frac{\beta}{P_2(\xi, \lambda)} (i\xi_j f_k + i\xi_k f_j)\right]
- \CF^{-1} \left[\frac{ 2\beta}{P_2(\xi, \lambda)}
\frac{i \xi_j\xi_k}{|\xi|^2 } 
\xi \cdot \hat{\bff} \right] \\
&\enskip
+ \CF^{-1} \left[\frac{\beta^2|\xi|^2}{(\lambda + |\xi|^2 + a)P_2(\xi, \lambda)} 
\left( \xi_j \sum^N_{\ell=1} \xi_\ell \hat{G}_{k\ell}
+ \xi_k\sum^N_{\ell=1} \xi_\ell \hat{G}_{j\ell} \right) \right]  \\
&\enskip
+ \CF^{-1} \left[ \frac{2\beta^2}{(\lambda + |\xi|^2 + a)P_2(\xi, \lambda)}
\xi_j \xi_k \sum^N_{\ell, m=1} \xi_\ell \xi_m \hat{G}_{\ell m} \right] \\
&\enskip
+ \CF^{-1} \left[\frac{1}{\lambda + |\xi|^2 + a} \hat{G}_{jk} \right]
+\frac{a}{N} \CF^{-1} \left[\frac{1}{P_1(\xi, \lambda)} \tr \hat{\bg} \delta_{jk} \right]
\end{aligned}
\end{equation}
where
\begin{equation}\label{p1}
P_1(\xi, \lambda) 
= (\lambda + |\xi|^2) (\lambda + |\xi|^2 + a).
\end{equation}
Let $\CA(\lambda) (\bff, \bg)$ be a vector whose 
$j^{\rm th}$ component is 
 $\CA_j(\lambda)(\bff, \bg)$ and
let $\CB(\lambda)(\bff, \bg)$ be a matrix whose 
$(j, k)^{\rm th}$ component is 
by $\CB_{jk}(\lambda)(\bff, \bg)$, respectively. 
To prove the $\CR$-boundedness of $\CA(\lambda)$ and $\CB(\lambda)$,
we introduce the following two lemmas.
Lemma \ref{lem:5.3} was proved by \cite[Proposition 3.4]{DHP}
and 
Lemma \ref{kernel} proved by 
\cite[Lemma 2.1]{DS}, 
\cite[Theorem 3.3]{ES}, and 
\cite[Lemma 2.5]{Sa}

\begin{lem}\label{lem:5.3}
$\thetag1$ 
Let $X$ and $Y$ be Banach spaces, 
and let $\CT$ and $\CS$ be $\CR$-bounded families in $\CL(X, Y)$. 
Then, $\CT+\CS=\{T+S \mid T\in \CT, S\in \CS\}$ is also 
$\CR$-bounded family in $\CL(X, Y)$ and 
\[
\CR_{\CL(X, Y)}(\CT+\CS)\leq \CR_{\CL(X, Y)}(\CT)
+\CR_{\CL(X, Y)}(\CS).
\]
$\thetag2$
Let $X$, $Y$ and $Z$ be Banach spaces and 
let $\CT$ and $\CS$ be $\CR$-bounded families
 in $\CL(X, Y)$ and $\CL(Y, Z)$, respectively. 
Then, $\CS\CT=\{ST \mid T\in \CT, S\in \CS\}$ is also 
an $\CR$-bounded family 
in $\CL(X, Z)$ and 
\[
\CR_{\CL(X, Z)}(\CS\CT)\leq \CR_{\CL(X, Y)}(\CT)\CR_{\CL(Y, Z)}(\CS). 
\]
\end{lem}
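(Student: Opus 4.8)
The plan is to deduce both assertions directly from Definition~\ref{dfn2}; this is the classical statement recorded in \cite[Proposition~3.4]{DHP}, so the argument is essentially bookkeeping with random sums and no substantive difficulty is involved. The one preliminary point that deserves attention is that Definition~\ref{dfn2} permits the exponent $p\in[1,\infty)$ to depend on the family, whereas to add or to compose two $\CR$-bounded families one wants to test them against a common exponent. I would therefore start by recalling the standard fact (a consequence of Kahane's inequality) that the $L_p((0,1))$-norms of a random sum $\sum_j r_j(\cdot)x_j$ in a Banach space are mutually comparable for all $p\in[1,\infty)$; hence a family that is $\CR$-bounded for some exponent is $\CR$-bounded for every $p\in[1,\infty)$ with an equivalent bound, and we may fix one exponent $p$ to be used for $\CT$, $\CS$, and for the composite families appearing in the conclusion.

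For \thetag1, I would fix $n\in\BN$, operators $\{T_j\}_{j=1}^n\subset\CT$, $\{S_j\}_{j=1}^n\subset\CS$, elements $\{f_j\}_{j=1}^n\subset X$ and a Rademacher sequence $\{r_j\}_{j=1}^n$ on $[0,1]$, write $\sum_{j=1}^n r_j(u)(T_j+S_j)f_j=\sum_{j=1}^n r_j(u)T_jf_j+\sum_{j=1}^n r_j(u)S_jf_j$, and apply the triangle inequality in $L_p((0,1);Y)$. Estimating the first summand by $\CR_{\CL(X,Y)}(\CT)\{\int_0^1\|\sum_j r_j(u)f_j\|_X^p\,du\}^{1/p}$ and the second by the same quantity with $\CR_{\CL(X,Y)}(\CS)$ in place of $\CR_{\CL(X,Y)}(\CT)$ yields $\CR_{\CL(X,Y)}(\CT+\CS)\le\CR_{\CL(X,Y)}(\CT)+\CR_{\CL(X,Y)}(\CS)$.

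For \thetag2 the plan is to ``peel off'' the two families successively. Given $\{T_j\}_{j=1}^n\subset\CT$, $\{S_j\}_{j=1}^n\subset\CS$ and $\{f_j\}_{j=1}^n\subset X$, I would set $g_j:=T_jf_j$, which lies in $Y$, and note that $\sum_j r_j(u)S_jT_jf_j=\sum_j r_j(u)S_jg_j$. Since $\CS$ is $\CR$-bounded in $\CL(Y,Z)$ and the $g_j$ are admissible inputs there, the defining inequality gives
\[
\Bigl\{\int_0^1\Bigl\|\sum_{j=1}^n r_j(u)S_jT_jf_j\Bigr\|_Z^p\,du\Bigr\}^{1/p}
\le\CR_{\CL(Y,Z)}(\CS)\,\Bigl\{\int_0^1\Bigl\|\sum_{j=1}^n r_j(u)T_jf_j\Bigr\|_Y^p\,du\Bigr\}^{1/p}.
\]
Applying next the defining inequality for $\CR$-boundedness of $\CT$ in $\CL(X,Y)$ to the right-hand side bounds it by $\CR_{\CL(X,Y)}(\CT)\{\int_0^1\|\sum_j r_j(u)f_j\|_X^p\,du\}^{1/p}$; combining the two estimates and taking $p$-th roots gives $\CR_{\CL(X,Z)}(\CS\CT)\le\CR_{\CL(X,Y)}(\CT)\,\CR_{\CL(Y,Z)}(\CS)$.

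As indicated, there is no real obstacle here: the only delicate steps are the reduction to a common exponent in the preliminary step, handled by Kahane's inequality, and the observation in \thetag2 that $T_jf_j\in Y$, which is precisely what legitimizes the ``peeling''. A fully rigorous write-up would simply spell out these two inequalities together with the triangle inequality, keeping track of the constants.
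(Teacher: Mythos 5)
Your argument is correct and is the standard proof of this lemma; note that the paper itself does not supply a proof, simply citing \cite[Proposition~3.4]{DHP}, so there is no in-text argument to compare against. Both parts are handled exactly as in the reference: part \thetag1 by the triangle inequality in $L_p((0,1);Y)$ applied to the split $\sum_j r_j(T_j+S_j)f_j = \sum_j r_jT_jf_j + \sum_j r_jS_jf_j$, and part \thetag2 by peeling off $\CS$ first (using $T_jf_j\in Y$ as the admissible inputs) and then $\CT$. Your preliminary remark about Kahane's inequality is a sensible precaution given that Definition~\ref{dfn2} lets $p$ depend on the family; strictly speaking the clean numerical inequalities $\CR(\CT+\CS)\le\CR(\CT)+\CR(\CS)$ and $\CR(\CS\CT)\le\CR(\CT)\CR(\CS)$ are asserted for a \emph{fixed} exponent $p$, and if one insists on comparing $\CR$-bounds computed with different exponents then Kahane constants enter. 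Fixing $p$ once and for all, as you do, is the usual convention and makes the bookkeeping exact.
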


\begin{lem}\label{kernel}
 Let $1 < q < \infty$, $\delta > 0$.
Assume that $k(\xi, \lambda)$, $\ell(\xi, \lambda)$,
and $m(\xi, \lambda)$ 
are functions on $(\BR^N \setminus \{0\}) \times \Sigma_{\sigma, 0}$ such 
that for any $\sigma \in (\sigma_0, \pi/2)$ and
any multi-index $\alpha \in \BN^N_0$
there exists a positive constant $M_{\alpha, \sigma}$ such that 
\begin{align*} 
&|\pd_\xi^\alpha k(\xi, \lambda)|
\leq M_{\alpha, \epsilon} |\xi|^{1-|\alpha|}, \quad
|\pd_\xi^\alpha \ell(\xi, \lambda)|
\leq M_{\alpha, \epsilon} |\xi|^{-|\alpha|},\\
&
|\pd_\xi^\alpha m(\xi, \lambda)|
\leq M_{\alpha, \epsilon} (|\lambda|^{1/2} + |\xi|)^{-1} |\xi|^{-|\alpha|}
\end{align*}
for any $(\xi, \lambda) \in (\BR^N\setminus\{0\}) \times \Sigma_{\sigma, 0}$.
Let $K(\lambda)$, $L(\lambda)$, and $M(\lambda)$
 be operators defined by
\[
\begin{aligned}
&[K(\lambda) f](x)
= \CF^{-1}_\xi[k(\xi, \lambda)\hat{f}(\xi)](x)& \enskip& (\lambda \in \Sigma_{\sigma, 0}),\\
&[L(\lambda) f](x)
= \CF^{-1}_\xi[\ell(\xi, \lambda)\hat{f}(\xi)](x) &\enskip& (\lambda \in \Sigma_{\sigma, 0}),\\
&[M(\lambda) f](x)
= \CF^{-1}_\xi[m(\xi, \lambda)\hat{f}(\xi)](x) &\enskip &(\lambda \in \Sigma_{\sigma, \delta}).
\end{aligned}
\]
Then, the following assertions hold true: 
\begin{itemize}
\item[\thetag1]
The set
$\{K(\lambda) \mid \lambda \in \Sigma_{\sigma, 0}\}$ is 
$\CR$-bounded on 
$\CL(W^1_q(\BR^N), L_q(\BR^N))$
and there exists a positive constant $C_{N, q}$ such that 
\[
\CR_{\CL(W^1_q(\BR^N), L_q(\BR^N))}
(\{K(\lambda) \mid \lambda \in \Sigma_{\sigma, 0} \})
\leq C_{N, q}\max_{|\alpha| \leq N+1} M_{\alpha, \sigma}.
\]
\item[\thetag2]
Let $n=0, 1$. Then, the set
$\{L(\lambda) \mid \lambda \in \Sigma_{\sigma, 0}\}$ is 
$\CR$-bounded on 
$\CL(W^n_q(\BR^N))$
and there exists a positive constant $C_{N, q}$ such that 
\[
\CR_{\CL(W^n_q(\BR^N))}
(\{L(\lambda) \mid \lambda \in \Sigma_{\sigma, 0} \})
\leq C_{N, q}\max_{|\alpha| \leq N+1} M_{\alpha, \sigma}.
\]
\item[\thetag3]
The set
$\{M(\lambda) \mid \lambda \in \Sigma_{\sigma, \delta}\}$ is 
$\CR$-bounded on 
$\CL(L_q(\BR^N), W^1_q(\BR^N))$
and there exists a positive constant $C_{N, q}$ such that 
\[
\CR_{\CL(L_q(\BR^N), W^1_q(\BR^N))}
(\{M_\lambda \mid \lambda \in \Sigma_{\sigma, \delta} \})
\leq C_{N, q, \delta}\max_{|\alpha| \leq N+1} M_{\alpha, \sigma}.
\]
\end{itemize}

\end{lem}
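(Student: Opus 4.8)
The plan is to reduce the whole lemma to one core statement about $\CR$-boundedness of zero-th order Fourier multiplier families on $L_q(\BR^N)$, and then to prove that core statement using the square-function description of $\CR$-boundedness in $L_q$-spaces together with the classical Hilbert-space-valued Mikhlin--H\"ormander multiplier theorem. The core statement $(\star)$ I would isolate is: \emph{for an arbitrary index set $\Lambda$ and a family $\{n(\xi,\lambda)\}_{\lambda\in\Lambda}$ of functions, smooth on $\BR^N\setminus\{0\}$, with $|\pd_\xi^\alpha n(\xi,\lambda)|\le M|\xi|^{-|\alpha|}$ for all $\xi\ne0$, $\lambda\in\Lambda$ and $|\alpha|\le N+1$, the operators $N(\lambda)f:=\CF^{-1}[n(\xi,\lambda)\hat f(\xi)]$ form an $\CR$-bounded family in $\CL(L_q(\BR^N))$ with $\CR$-bound at most $C_{N,q}M$.}

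Granting $(\star)$, I would deduce (1), (2), (3) by symbol manipulations and Lemma \ref{lem:5.3}. Part (2) with target $L_q$ is immediate, since $\ell(\xi,\lambda)$ already obeys the hypothesis of $(\star)$; for the target $W^1_q$ I would use that the Fourier multiplier $L(\lambda)$ commutes with each $\pd_j$, so $\|L(\lambda)g\|_{W^1_q}$ equals the $L_q(\BR^N)^{N+1}$-norm of $(L(\lambda)^{\oplus(N+1)}\circ\iota)g$, where $\iota g=(g,\pd_1g,\dots,\pd_Ng)$ realizes an equivalent norm on $W^1_q$; as $\CR$-boundedness passes through finite diagonal sums and through composition with the fixed bounded operator $\iota$ (Definition \ref{dfn2} with $p=q$, and Lemma \ref{lem:5.3}), $\{L(\lambda)\}$ is $\CR$-bounded on $\CL(W^1_q(\BR^N))$. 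For (1), a short Leibniz computation shows $\tilde n_j(\xi,\lambda):=-i\,k(\xi,\lambda)\xi_j|\xi|^{-2}$ satisfies the hypothesis of $(\star)$ (the factor $|\xi|^{+1}$ in the bound for $k$ absorbs the $|\xi|^{-1}$ produced by $\xi_j|\xi|^{-2}$), and since $k(\xi,\lambda)\hat f=\sum_{j=1}^N\tilde n_j(\xi,\lambda)\widehat{\pd_j f}$ we have $K(\lambda)=\sum_{j}\tilde N_j(\lambda)\circ\pd_j$ with $\tilde N_j(\lambda)$ covered by $(\star)$ and $\pd_j\in\CL(W^1_q,L_q)$; Lemma \ref{lem:5.3} then gives $\{K(\lambda)\}$ $\CR$-bounded in $\CL(W^1_q,L_q)$. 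For (3), on $\Sigma_{\sigma,\delta}$ one has $|\lambda|\ge\delta$, hence $|\pd_\xi^\alpha m(\xi,\lambda)|\le M\delta^{-1/2}|\xi|^{-|\alpha|}$, while the symbol $i\xi_j m(\xi,\lambda)$ of $\pd_jM(\lambda)$ satisfies $|\pd_\xi^\alpha(i\xi_jm)|\le C_NM|\xi|^{-|\alpha|}$ because $|\xi|(|\lambda|^{1/2}+|\xi|)^{-1}\le1$; so $\{M(\lambda)\}$ and each $\{\pd_jM(\lambda)\}$ are $\CR$-bounded on $L_q$ by $(\star)$, and since $\|M(\lambda)f\|_{W^1_q}\sim\|M(\lambda)f\|_{L_q}+\sum_j\|\pd_jM(\lambda)f\|_{L_q}$, assembling them with Lemma \ref{lem:5.3} (through the slot inclusions $L_q\hookrightarrow L_q^{N+1}$) gives $\{M(\lambda)\}$ $\CR$-bounded in $\CL(L_q,W^1_q)$.

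The remaining and essential task is $(\star)$. I would first recall the square-function characterization of $\CR$-boundedness in $L_q$: by Fubini and Khintchine's inequality (valid for $0<q<\infty$), for any finite families one has
\[
\Bigl(\int_0^1\bigl\|{\textstyle\sum_j}r_j(u)T_jf_j\bigr\|_{L_q}^q\,du\Bigr)^{1/q}\ \sim_q\ \bigl\|({\textstyle\sum_j}|T_jf_j|^2)^{1/2}\bigr\|_{L_q},
\]
and the same with $f_j$ in place of $T_jf_j$; hence (Definition \ref{dfn2} with $p=q$) a family $\CT\subset\CL(L_q(\BR^N))$ is $\CR$-bounded iff $\|({\sum_j}|T_jf_j|^2)^{1/2}\|_{L_q}\le C\|({\sum_j}|f_j|^2)^{1/2}\|_{L_q}$ for all finite subfamilies, with $\CR$-bound comparable to the best such $C$, the comparison constant depending only on $q$. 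Now fix $\lambda_1,\dots,\lambda_\nu\in\Lambda$ and $f_1,\dots,f_\nu$; writing $F=(f_1,\dots,f_\nu)$ one has $\|({\sum_j}|f_j|^2)^{1/2}\|_{L_q}=\|F\|_{L_q(\BR^N;\ell^2_\nu)}$ and $({\sum_j}|N(\lambda_j)f_j|^2)^{1/2}=|\mathsf TF|_{\ell^2_\nu}$, where $\mathsf T$ is the Fourier multiplier on $\ell^2_\nu$-valued functions with operator-valued symbol $\xi\mapsto\mathrm{diag}(n(\xi,\lambda_1),\dots,n(\xi,\lambda_\nu))\in\CL(\ell^2_\nu)$. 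Because $\||\xi|^{|\alpha|}\pd_\xi^\alpha\,\mathrm{diag}(n(\xi,\lambda_j))\|_{\CL(\ell^2_\nu)}=|\xi|^{|\alpha|}\max_{1\le j\le\nu}|\pd_\xi^\alpha n(\xi,\lambda_j)|\le M$ for $|\alpha|\le N+1$, the operator-norm Mikhlin--H\"ormander condition holds uniformly, and since the target $\ell^2_\nu$ is a Hilbert space the classical vector-valued multiplier theorem applies and yields $\|\mathsf T\|_{\CL(L_q(\BR^N;\ell^2_\nu))}\le C_{N,q}M$ with constant independent of $\nu$ and of the $\lambda_j$; hence $\|({\sum_j}|N(\lambda_j)f_j|^2)^{1/2}\|_{L_q}\le C_{N,q}M\|({\sum_j}|f_j|^2)^{1/2}\|_{L_q}$, which is $(\star)$. (The $N(\lambda)$ are defined first on $L_2(\BR^N)\cap L_q(\BR^N)$ using boundedness of the symbols, and the estimate furnishes the extension to $L_q(\BR^N)$.)

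The main obstacle is $(\star)$, and within it the delicate point is uniformity in the number $\nu$ of operators and in the parameters $\lambda_j$; routing through the square-function reformulation is what turns this into an ordinary operator-norm Mikhlin bound on the Hilbert space $\ell^2_\nu$, for which uniformity is automatic, so no $\CR$-boundedness of the operator-valued symbol has to be checked. If one prefers to avoid the vector-valued multiplier theorem, $(\star)$ can equally be obtained by a direct Calder\'on--Zygmund argument: dyadically decompose $n(\xi,\lambda)=\sum_{k\in\BZ}n(\xi,\lambda)\varphi(2^{-k}\xi)$, estimate the $\ell^2_\nu$-valued convolution kernels and verify the H\"ormander integral condition uniformly in $\nu$ and $\lambda$, deduce an $L_2$ bound and a weak-$(1,1)$ bound for $\mathsf T$, and then interpolate and dualize to cover $1<q<\infty$; this is the route of the references cited just after the statement. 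Everything outside $(\star)$ is routine bookkeeping with Lemma \ref{lem:5.3} and the embedding $\iota$.
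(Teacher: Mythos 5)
The paper gives no proof of this lemma in-line; it simply cites \cite[Lemma 2.1]{DS}, \cite[Theorem 3.3]{ES}, and \cite[Lemma 2.5]{Sa}. Your argument is correct and is essentially the argument of those references: reduce to a single uniform Mikhlin-type $\CR$-boundedness statement $(\star)$ for zero-order symbols, establish $(\star)$ via the Khintchine square-function characterization of $\CR$-boundedness on $L_q$ and the Hilbert-space-valued ($\ell^2_\nu$) Mikhlin--H\"ormander theorem with $\nu$-independent constant, and then derive (1), (2), (3) by routine symbol algebra, Leibniz estimates, and Lemma \ref{lem:5.3}. The symbol verifications you carry out are all correct: $\tilde n_j=-ik\xi_j|\xi|^{-2}$ picks up exactly the missing $|\xi|^{-1}$; on $\Sigma_{\sigma,\delta}$ the factor $(|\lambda|^{1/2}+|\xi|)^{-1}\le\delta^{-1/2}$ produces the stated $\delta$-dependence; and $|\xi|(|\lambda|^{1/2}+|\xi|)^{-1}\le1$ controls $i\xi_j m$. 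The only cosmetic remark is that the case $n=0$ of (2) is already $(\star)$ verbatim, so your $W^1_q$ bookkeeping via $\iota$ is only needed for $n=1$; and your parenthetical alternative via a direct Calder\'on--Zygmund argument is precisely the route taken in the cited sources, so the two presentations are the same proof in different clothing.
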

To use Lemma \ref{kernel}, we prepare the following lemmas.

\begin{lem}\label{spectrum}
\begin{itemize} 
\item[\thetag1]
Let $0 < \epsilon < \pi/2$.
Then, for any $\lambda \in \Sigma_{\epsilon, 0}$
and
$\alpha \geq 0$, we have
\begin{equation}\label{spectrum1}
|\lambda + \alpha| \geq \left(\sin \frac{\epsilon}{2}\right) (|\lambda| + \alpha).
\end{equation}
\item[\thetag2]
Let $0 < \epsilon < \pi/2$ and 
$a >0$. Then, for any $(\xi, \lambda) \in \BR^N \times \Sigma_{\epsilon, 0}$,
we have
\[
|P_1(\xi, \lambda)| \geq \left(\frac{1}{2}\sin \frac{\epsilon}{2}\right)^2 (|\lambda|^{1/2}+ |\xi|)^4.
\]
\item[\thetag3]
Let $\sigma_0$ be the angle defined in \eqref{sigma}.
Then, for any $\sigma \in (\sigma_0, \pi /2)$ and $(\xi, \lambda) \in \BR^N \times \Sigma_{\sigma, 0}$,
we have     
\begin{equation}\label{spectrum3}
|P_2(\xi, \lambda)| \geq C_{\sigma, \beta} (|\lambda|^{1/2}+ |\xi|)^4
\end{equation}
with some constant $C_{\sigma, b}$ independent of $\xi$ and $\lambda$.
\end{itemize}
\end{lem}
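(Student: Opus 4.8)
\textbf{Proof plan for Lemma \ref{spectrum}.}

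The plan is to treat the three estimates in increasing order of difficulty, since each one feeds into the next. For \thetag1, I would argue geometrically: writing $\lambda = |\lambda|e^{i\theta}$ with $|\theta| \le \pi - \epsilon$, the point $\lambda + \alpha$ lies in the sector, and its modulus is minimized (relative to $|\lambda| + \alpha$) when $\theta$ is as close to $\pi$ as possible. A direct computation with the law of cosines, $|\lambda+\alpha|^2 = |\lambda|^2 + 2\alpha|\lambda|\cos\theta + \alpha^2$, together with the elementary inequality $1 + \cos\theta \ge 2\sin^2(\epsilon/2)$ for $|\theta| \le \pi-\epsilon$, yields $|\lambda+\alpha|^2 \ge (\sin^2(\epsilon/2))(|\lambda|+\alpha)^2$, which is \eqref{spectrum1}. (One can also see this via the half-angle bisection that explains the factor $\sin(\epsilon/2)$ rather than $\sin\epsilon$.)

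For \thetag2, I would apply \thetag1 twice. Note $P_1(\xi,\lambda) = (\lambda + |\xi|^2)(\lambda + |\xi|^2 + a)$, and both factors have the form $\lambda + \alpha$ with $\alpha = |\xi|^2 \ge 0$ and $\alpha = |\xi|^2 + a > 0$ respectively, so \thetag1 gives $|\lambda + |\xi|^2| \ge (\sin(\epsilon/2))(|\lambda| + |\xi|^2)$ and $|\lambda + |\xi|^2 + a| \ge (\sin(\epsilon/2))(|\lambda| + |\xi|^2 + a) \ge (\sin(\epsilon/2))(|\lambda| + |\xi|^2)$. Multiplying, $|P_1(\xi,\lambda)| \ge (\sin(\epsilon/2))^2(|\lambda| + |\xi|^2)^2$, and then the equivalence $|\lambda| + |\xi|^2 \ge \tfrac12(|\lambda|^{1/2} + |\xi|)^2$ finishes it. The constant $(\tfrac12\sin(\epsilon/2))^2$ matches the stated one after squaring that $\tfrac12$.

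The genuinely new point is \thetag3, and I expect it to be the main obstacle because $P_2(\xi,\lambda) = (\lambda + |\xi|^2)(\lambda + |\xi|^2 + a) + \beta^2|\xi|^4$ is no longer a product, so the two-factor trick fails; the extra term $\beta^2|\xi|^4$ is exactly the coupling between $\bu$ and $\bQ$. When $\beta = 0$ we are back to \thetag2 with $\sigma_0 = 0$, so assume $\beta \ne 0$. The idea is to substitute $\mu = \lambda + |\xi|^2$, so that $P_2 = \mu(\mu + a) + \beta^2|\xi|^4 = \mu^2 + a\mu + \beta^2|\xi|^4$; since $\lambda \in \Sigma_{\sigma,0}$ and $|\xi|^2 \ge 0$, the shifted parameter $\mu$ still lies in $\Sigma_{\sigma,0}$ (translation by a nonnegative real keeps us in the sector). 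The dominant balance is between $\mu^2$ and $\beta^2|\xi|^4$: I would split into the regime $|\mu| \ge |\beta||\xi|^2$, where $\mu^2$ controls and $\arg(\mu^2) \in (-2\sigma, 2\sigma)$ with $2\sigma < \pi$ keeps $\mu^2$ away from the negative axis while $\beta^2|\xi|^4 \ge 0$, so $|\mu^2 + \beta^2|\xi|^4|$ cannot be small; and the complementary regime $|\mu| < |\beta||\xi|^2$, where the factorization $\mu^2 + a\mu + \beta^2|\xi|^4 = (\mu - r_+|\xi|^2)(\mu - r_-|\xi|^2) + a\mu$ (with $r_\pm$ the roots of $r^2 + \beta^2 = 0$ in the leading order, i.e. $r_\pm = \pm i|\beta|$) shows the factors $\mu \mp i|\beta||\xi|^2$ each avoid the origin precisely because $\arg(i|\beta||\xi|^2) = \pm\pi/2$ is separated from the sector $\Sigma_{\sigma,0}$ by the angle condition $\sigma > \sigma_0 = \arg(1 + i|\beta|)$; here I would lower-bound $|\mu - i|\beta||\xi|^2|$ and $|\mu + i|\beta||\xi|^2|$ by a constant times $|\xi|^2$ using that the angular distance from $\Sigma_{\sigma,0}$ to $\pm i\infty$ direction is positive, and absorb the lower-order term $a\mu$. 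Combining the two regimes and re-expressing $|\mu| + |\xi|^2 \simeq |\lambda| + |\xi|^2 \simeq (|\lambda|^{1/2} + |\xi|)^2$ gives \eqref{spectrum3} with $C_{\sigma,\beta}$ depending only on $\sigma$ and $\beta$. The delicate bookkeeping is tracking how small $C_{\sigma,\beta}$ degenerates as $\sigma \downarrow \sigma_0$, but for the lemma we only need positivity for fixed $\sigma \in (\sigma_0, \pi/2)$.
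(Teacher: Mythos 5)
Your treatment of \thetag1 and \thetag2 is correct and matches what the paper does in spirit: \thetag1 via the law-of-cosines/half-angle estimate (the paper merely cites a reference), and \thetag2 by applying \thetag1 to each factor of $P_1$ and then using $|\lambda|+|\xi|^2 \ge \tfrac12(|\lambda|^{1/2}+|\xi|)^2$.

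Your outline for \thetag3, however, contains a conceptual error that breaks both halves of the regime split. The sector in this paper is
\[
\Sigma_{\sigma,0} = \{\lambda \in \BC : |\arg\lambda| < \pi - \sigma\},
\]
which is the \emph{wide} sector of opening $2(\pi-\sigma) > \pi$ (since $\sigma<\pi/2$); it contains the entire imaginary axis. After the substitution $\mu=\lambda+|\xi|^2$ you only retain the information $\mu\in\Sigma_{\sigma,0}$, and that is too weak. In the first regime you assert $\arg(\mu^2)\in(-2\sigma,2\sigma)$, but what $\mu\in\Sigma_{\sigma,0}$ actually gives is $|\arg(\mu^2)|<2(\pi-\sigma)$, which wraps past the negative real axis; indeed $\mu=i|\beta||\xi|^2$ lies in $\Sigma_{\sigma,0}$ and makes $\mu^2+\beta^2|\xi|^4=0$. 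In the second regime you assert that $\pm i|\beta||\xi|^2$ is separated from $\Sigma_{\sigma,0}$, but $\arg(\pm i|\beta||\xi|^2)=\pm\pi/2 < \pi-\sigma$, so these points lie inside $\Sigma_{\sigma,0}$; your lower bound on $|\mu\mp i|\beta||\xi|^2|$ does not follow. The feature your argument discards is that $\mu$ is not merely in $\Sigma_{\sigma,0}$ but in the \emph{translated} set $|\xi|^2+\Sigma_{\sigma,0}$, and the resonance points $\pm i|\beta||\xi|^2$ are excluded precisely because $\pm i|\beta||\xi|^2-|\xi|^2 = -(1\mp i|\beta|)|\xi|^2$ has argument $\pi\mp\sigma_0$, outside $\Sigma_{\sigma,0}$. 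That is the place where $\sigma>\sigma_0$ really enters, and it is invisible once you only record $\mu\in\Sigma_{\sigma,0}$. You also defer the $a\mu$ term to an unquantified ``absorption''; in the regime $|\mu|<|\beta||\xi|^2$ the quantity $\mu^2+\beta^2|\xi|^4$ can be of order $a|\mu|$, so this requires a genuine estimate, not an afterthought.

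The paper avoids these issues by \emph{not} shifting variables: it factors $P_2(\xi,\lambda)=(\lambda-\lambda_+)(\lambda-\lambda_-)$ in the original $\lambda$, records the asymptotics $\lambda_\pm = -|\xi|^2+O(|\xi|^4)$ (resp. $-|\xi|^2-a+O(|\xi|^4)$) as $|\xi|\to0$ and $\lambda_\pm = -(1\pm i|\beta|)|\xi|^2+O(1) = -\sqrt{1+\beta^2}\,|\xi|^2 e^{\pm i\sigma_0}+O(1)$ as $|\xi|\to\infty$, and then applies the law of cosines to $|\lambda-\lambda_\pm|^2$ directly. Writing $\lambda=|\lambda|e^{i\theta}$ with $|\theta|\le\pi-\sigma$, the cross term is $\cos(\theta\mp\sigma_0)\ge-\cos(\sigma-\sigma_0)$, so the angular separation $\sigma-\sigma_0>0$ between $\arg\lambda$ and $\arg\lambda_\pm$ produces the needed positive constant. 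Working in $\lambda$-space keeps the angular constraint on $\lambda$ in play throughout; this is exactly what your $\mu$-space argument loses. If you want to keep the $\mu$-substitution, you would have to carry along the condition $\mu\in|\xi|^2+\Sigma_{\sigma,0}$ and prove the lower bound for the factors over that translated sector, which is essentially equivalent to the paper's law-of-cosines estimate in $\lambda$.
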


\begin{proof}
\thetag1 is well-known (cf. e.g. \cite[Lemma 3.5.2]{S}).
\thetag2 follows from \thetag1.
We shall show \thetag3.
In the case $\beta = 0$, \eqref{spectrum3} is proved by \thetag2 because $P_2(\xi, \lambda) = P_1(\xi, \lambda)$.
In the case $\beta \neq 0$, we rewrite $P_2(\xi, \lambda)$ as follows: 
\[
P_2(\xi, \lambda) = (\lambda - \lambda_+)(\lambda - \lambda_-),
\]
where
\begin{align}\label{p2 low}
\left\{
\begin{aligned}
&\lambda_+ = -|\xi|^2 + O(|\xi|^4), \\
&\lambda_- = -|\xi|^2 - a + O(|\xi|^4)  \enskip \text{as } |\xi| \to 0,
\end{aligned}
\right.
\end{align}
\begin{equation}\label{p2 high}
\lambda_\pm 
= - (1 \pm i|\beta|)|\xi|^2 +O(1) \enskip \text{as } |\xi| \to \infty.
\end{equation}
By \eqref{p2 low},
we can prove \eqref{spectrum3} for any $\sigma \in (0, \pi/2)$
by the same way as $\beta=0$.
We consider the high frequency part.
Let $\lambda = |\lambda| e^{i\theta}$
for $|\theta| \leq \pi -\sigma$, $\sigma \in (\sigma_0, \pi/2)$.
Noting that 
$\lambda_\pm = -\sqrt{1+\beta^2}|\xi|^2 e^{\pm i \sigma_0}+O(1)$
for large $|\xi|$,
we have
\begin{align*}
|\lambda - \lambda_\pm|^2
&\geq |\lambda|^2 + (1+\beta^2)|\xi|^4 
+ 2 \sqrt{1+\beta^2}|\lambda||\xi|^2 \cos (\theta \mp \sigma_0)
-(|\lambda|+|\xi|^2+O(1))O(1)\\
&\geq  |\lambda|^2 + (1+\beta^2)|\xi|^4 - 2 \sqrt{1+\beta^2}|\lambda||\xi|^2 \cos (\sigma - \sigma_0) -(|\lambda|+|\xi|^2+O(1))O(1)\\
&\geq (1- \cos (\sigma - \sigma_0))(|\lambda|^2 + |\xi|^4)
-(|\lambda|+|\xi|^2+O(1))O(1).
\end{align*}
Here, we used that
$\cos (\theta \mp \sigma_0) \geq \cos \{\pi - (\sigma - \sigma_0)\} 
=- \cos (\sigma - \sigma_0)$.
Therefore, we have \eqref{spectrum3} for any $\sigma \in (\sigma_0, \pi/2)$
and $\lambda \in \Sigma_{\sigma, 0}$. 
\end{proof}
Using Lemma \ref{spectrum} and 
the following Bell's formula for the derivatives of the composite functions:
\[
\pd^\alpha_\xi f (g (\xi))
= \sum^{|\alpha|}_{k = 1} f^{(k)} (g(\xi)) 
\sum_{\alpha = \alpha_1 + \cdots + \alpha_k \atop |\alpha_i| \geq 1}
\Gamma^{\alpha}_{\alpha_1, \ldots, \alpha_k}
(\pd_{\xi}^{\alpha_1} g(\xi)) \cdots
(\pd_{\xi}^{\alpha_k} g(\xi))
\]
with $f^{(k)}(t) = d^k f(t)/dt^{k}$ and suitable coefficients
 $\Gamma^{\alpha}_{\alpha_1, \ldots, \alpha_k}$,
we have the following estimates.

\begin{lem}\label{lem:spectrum2}
\begin{itemize} 
\item[\thetag1]
Let $0 < \epsilon < \pi/2$, $a>0$ and $n = 0, 1$.
Then, for any multi-index $\alpha \in \BN^N_0$,
there exists a positive constant $C$
depending on at most $\alpha$, $\epsilon$
and $b$ such that for any
$(\xi, \lambda) \in \BR^N \times \Sigma_{\epsilon, 0}$
with $\lambda = \gamma + i\tau$,
\begin{align*}
|\pd_\xi^\alpha \{(\tau \pd_\tau)^n (\lambda + |\xi|^2)^{-1}\}|
&\leq C (|\lambda|^{1/2} + |\xi|)^{-2-|\alpha|},\\
|\pd_\xi^\alpha \{(\tau \pd_\tau)^n (\lambda + |\xi|^2+a)^{-1}\}|
&\leq C (|\lambda|^{1/2} + |\xi|)^{-2-|\alpha|},\\
|\pd_\xi^\alpha \{(\tau \pd_\tau)^n P_1(\xi, \lambda)^{-1}\}|
&\leq C (|\lambda|^{1/2} + |\xi|)^{-4-|\alpha|}.
\end{align*} 
\item[\thetag2]
Let $a>0$ and $n = 0, 1$.
Then, for any $\sigma \in (\sigma_0, \pi/2)$ and 
any multi-index $\alpha \in \BN^N_0$,
there exists a positive constant $C$ depending on
at most $\alpha$, $\epsilon$
and $b$ such that for any
$(\xi, \lambda) \in \BR^N \times \Sigma_{\sigma, 0}$
with $\lambda = \gamma + i\tau$,
\[
|\pd_\xi^\alpha \{(\tau \pd_\tau)^n P_2(\xi, \lambda)^{-1}\}|
\leq C (|\lambda|^{1/2} + |\xi|)^{-4-|\alpha|}.
\]
\end{itemize}
\end{lem}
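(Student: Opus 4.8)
The strategy is to reduce everything to the two basic building blocks whose symbol estimates are already available: the single factor $(\lambda+|\xi|^2)^{-1}$ (and its variant $(\lambda+|\xi|^2+a)^{-1}$), together with the fourth-order symbols $P_1(\xi,\lambda)^{-1}$ and $P_2(\xi,\lambda)^{-1}$, whose denominators obey the lower bounds in Lemma \ref{spectrum}. First I would treat part \thetag1. For the factor $(\lambda+|\xi|^2)^{-1}$, one writes it as $f(g(\xi))$ with $f(t)=(\lambda+t)^{-1}$ and $g(\xi)=|\xi|^2$, and applies the stated Bell (Fa\`a di Bruno) formula. Since $f^{(k)}(t)=(-1)^k k!(\lambda+t)^{-k-1}$ and $|\partial_\xi^{\alpha_i}|\xi|^2|\le C|\xi|^{2-|\alpha_i|}$, each term in the sum is bounded by $C|\lambda+|\xi|^2|^{-k-1}|\xi|^{2k-|\alpha|}$; using Lemma \ref{spectrum}\thetag1, namely $|\lambda+|\xi|^2|\ge(\sin\frac\epsilon2)(|\lambda|+|\xi|^2)\ge c(|\lambda|^{1/2}+|\xi|)^2$, and the trivial bound $|\xi|^{2k}\le(|\lambda|^{1/2}+|\xi|)^{2k}$, every term is $\le C(|\lambda|^{1/2}+|\xi|)^{-2k-2+2k-|\alpha|}=C(|\lambda|^{1/2}+|\xi|)^{-2-|\alpha|}$, which is the claim. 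The same computation verbatim handles $(\lambda+|\xi|^2+a)^{-1}$ (the shift by $a\ge0$ only improves the lower bound). For $P_1(\xi,\lambda)^{-1}=(\lambda+|\xi|^2)^{-1}(\lambda+|\xi|^2+a)^{-1}$ I would simply apply the Leibniz rule to the product of the two factors just estimated, giving $(|\lambda|^{1/2}+|\xi|)^{-2-|\alpha_1|}\cdot(|\lambda|^{1/2}+|\xi|)^{-2-|\alpha_2|}$ summed over $\alpha_1+\alpha_2=\alpha$, i.e. $C(|\lambda|^{1/2}+|\xi|)^{-4-|\alpha|}$.

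To incorporate the operator $(\tau\partial_\tau)^n$ with $n=0,1$, note that $\tau\partial_\tau$ applied to a function of $\lambda=\gamma+i\tau$ acts as $\tau\partial_\tau = \tau\,\frac{\partial\lambda}{\partial\tau}\,\frac{d}{d\lambda}=i\tau\,\frac{d}{d\lambda}$. Applied to $(\lambda+|\xi|^2)^{-1}$ this yields $-i\tau(\lambda+|\xi|^2)^{-2}$, and since $|\tau|\le|\lambda|\le C(|\lambda|^{1/2}+|\xi|)^2$ while $|\lambda+|\xi|^2|^{-2}\le C(|\lambda|^{1/2}+|\xi|)^{-4}$, the product is still $\le C(|\lambda|^{1/2}+|\xi|)^{-2}$ — so one factor of $\tau\partial_\tau$ costs nothing in the homogeneity count. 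The key structural point is that $\partial_\xi^\alpha$ and $\tau\partial_\tau$ commute (they differentiate in independent variables), so for $n=1$ I would first apply $\tau\partial_\tau$, obtaining a finite linear combination of terms of the form $\tau\cdot(\text{product of resolvent-type factors})$, and then run the Bell/Leibniz argument on each; the extra $|\tau|$ is absorbed exactly as above. The same remark applies to $P_1$: $(\tau\partial_\tau)P_1^{-1}=-i\tau P_1^{-1}\cdot P_1^{-1}\partial_\lambda P_1$, and since $\partial_\lambda P_1=(\lambda+|\xi|^2)+(\lambda+|\xi|^2+a)$ has size $\le C(|\lambda|^{1/2}+|\xi|)^2$, one gets $|\tau|\,|P_1|^{-1}(|\lambda|^{1/2}+|\xi|)^{-2}\le C(|\lambda|^{1/2}+|\xi|)^{-4}$, and then $\partial_\xi^\alpha$ contributes $-|\alpha|$ to the exponent via Bell's formula as before.

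For part \thetag2 the only new ingredient is that $P_2(\xi,\lambda)^{-1}$ satisfies the \emph{same} lower bound $|P_2(\xi,\lambda)|\ge C_{\sigma,\beta}(|\lambda|^{1/2}+|\xi|)^4$ on the smaller sector $\Sigma_{\sigma,0}$ with $\sigma\in(\sigma_0,\pi/2)$, which is precisely Lemma \ref{spectrum}\thetag3. Writing $P_2(\xi,\lambda)=(\lambda+|\xi|^2)(\lambda+|\xi|^2+a)+\beta^2|\xi|^4$, one sees $P_2$ is a polynomial in $\xi$ and $\lambda$ with $|\partial_\xi^\alpha P_2(\xi,\lambda)|\le C(|\lambda|^{1/2}+|\xi|)^{4-|\alpha|}$ and $|\partial_\lambda P_2|\le C(|\lambda|^{1/2}+|\xi|)^2$; then $\partial_\xi^\alpha\{(\tau\partial_\tau)^n P_2^{-1}\}$ is expanded by the quotient/Leibniz rules into finitely many terms each of which is a product of powers of $P_2^{-1}$ times derivatives of $P_2$ (and possibly one factor of $\tau$), and the homogeneity bookkeeping using Lemma \ref{spectrum}\thetag3 gives exactly $C(|\lambda|^{1/2}+|\xi|)^{-4-|\alpha|}$. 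I do not anticipate a genuine obstacle here; the one place requiring a little care is the bookkeeping that the numerators $\partial_\lambda^j\partial_\xi^\bullet P_1$ or $P_2$ never grow faster than the denominators shrink — but this is guaranteed because $P_1$ and $P_2$ are (quasi-)homogeneous of degree $4$ in $(|\lambda|^{1/2},|\xi|)$ up to lower-order terms that are controlled by the same lower bounds, so the estimates are tight by design.
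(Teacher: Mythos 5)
Your strategy (Bell's formula for the composite $f(g(\xi))$, Leibniz for products, Lemma~\ref{spectrum} for the lower bounds) is the same one the paper points to, and your treatment of the single factors $(\lambda+|\xi|^2)^{-1}$, $(\lambda+|\xi|^2+a)^{-1}$ and of $P_1^{-1}$ by factoring into those two and using Leibniz is correct. The observation that $\partial_\xi^{\alpha_i}|\xi|^2=0$ for $|\alpha_i|\geq 3$, so $2k-|\alpha|\geq 0$ in each Bell term, is exactly what makes the bookkeeping close there.

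However, the bookkeeping you propose for $P_2^{-1}$ (and for $(\tau\pd_\tau)^nP_1^{-1}$ via $\pd_\lambda P_1$) contains a genuine error. Writing $\mu=|\lambda|^{1/2}+|\xi|$, your claimed bounds $|\pd_\xi^\alpha P_2|\leq C\mu^{4-|\alpha|}$ and $|\pd_\lambda P_2|\leq C\mu^2$ are false. Indeed $P_2=\lambda^2+2\lambda|\xi|^2+a\lambda+(1+\beta^2)|\xi|^4+a|\xi|^2$, so for instance $\pd_{\xi_j}\pd_{\xi_k}P_2$ contains the constant $2a\delta_{jk}$, while $\mu^{4-2}=\mu^2\to 0$ as $(\xi,\lambda)\to(0,0)$; likewise $\pd_\lambda P_2 = 2\lambda+2|\xi|^2+a$ has the persistent constant $a$. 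This is not a cosmetic issue: feeding your claimed bounds together with the lower bound $|P_2|\geq c\mu^4$ of Lemma~\ref{spectrum}\thetag3 into the Bell expansion $\sum_k P_2^{-k-1}\prod_i\pd_\xi^{\alpha_i}P_2$ yields, after tracking the extra $a$-terms honestly, contributions of the form $a^j\mu^{-4-|\alpha|-2j}$ with $j\geq 1$, which is \emph{worse} than $\mu^{-4-|\alpha|}$ precisely in the regime $\mu\lesssim\sqrt a$. Your closing remark that the lower-order terms are ``controlled by the same lower bounds'' is exactly where the argument breaks: the lower bound $|P_2|\geq c\mu^4$ is not sharp near $\mu=0$ (there $|P_2|\sim a\mu^2$) and does not absorb the $a$-terms in the numerator.

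The fix is to use the sharper lower bound $|P_2(\xi,\lambda)|\geq c\,\mu^2(\mu^2+a)$, which follows from the same root factorization $P_2=(\lambda-\lambda_+)(\lambda-\lambda_-)$ used to prove Lemma~\ref{spectrum}: one has $|\lambda-\lambda_+|\geq c\mu^2$ and $|\lambda-\lambda_-|\geq c(\mu^2+a)$ for $|\xi|$ small, with the high-frequency case handled as in the paper. Combined with the (true) bounds $|\pd_\xi^{\alpha_i}P_2|\leq C\mu^{2-|\alpha_i|}(\mu^2+a)$ for $1\leq|\alpha_i|\leq 4$ and $|\pd_\lambda P_2|\leq C(\mu^2+a)$, every Bell/Leibniz term is then bounded by $[\mu^2(\mu^2+a)]^{-k-1}\cdot\mu^{2k-|\alpha|}(\mu^2+a)^k=\mu^{-2-|\alpha|}(\mu^2+a)^{-1}\leq\mu^{-4-|\alpha|}$, which is the claim. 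Alternatively, for the $(\tau\pd_\tau)^n P_1^{-1}$ estimate you can simply apply $\tau\pd_\tau$ inside the factored form $(\lambda+|\xi|^2)^{-1}(\lambda+|\xi|^2+a)^{-1}$ and use Leibniz, which avoids the issue entirely; for $P_2^{-1}$ no such elementary factorization is available, so the sharper lower bound (or a low/high frequency split) is genuinely needed.
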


\begin{proof}[Proof of Theorem \ref{thm:Rbdd}]
Let $n = 0, 1$, $j, k, \ell, m =1, \dots, N$ and let $\alpha$ be any multi-index of $\BN_0^N$.
We first prove $\CR$-boundedness of $\CS_\lambda \CA(\lambda)$.
For this purpose, 
we verify $(\lambda, \lambda^{1/2}\xi_a, \xi_a \xi_b)\CA(\lambda)\bff$
satisfies the assumption of Lemma \ref{kernel}.
Using Lemma \ref{lem:spectrum2} and Leibniz's rule, for any 
$(\xi, \lambda) \in (\BR^N \setminus \{0\}) \times \Sigma_{\sigma, 0}$,
we have
\begin{align*}
\left|
\pd_\xi^\alpha (\tau \pd_\tau)^n
\left\{(\xi_a \xi_b, \lambda^{1/2}\xi_a, \lambda)\frac{\lambda + |\xi|^2 + a}{P_2(\xi, \lambda)}\right\}
\right|
&\leq C_{\alpha, \epsilon}|\xi|^{-|\alpha|},\\
\left|
\pd_\xi^\alpha (\tau \pd_\tau)^n
\left\{(\xi_a \xi_b, \lambda^{1/2}\xi_a, \lambda)
\frac{(\lambda + |\xi|^2 + a)\xi_j \xi_k}{P_2(\xi, \lambda)|\xi|^2}
\right\}
\right|
&\leq C_{\alpha, \epsilon}|\xi|^{-|\alpha|},\\
\left|
\pd_\xi^\alpha (\tau \pd_\tau)^n
\left\{\frac{(\xi_a \xi_b, \lambda^{1/2}\xi_a, \lambda)\xi_j|\xi|^2}{P_2(\xi, \lambda)}\right\}
\right|
&\leq C_{\alpha, \epsilon}|\xi|^{1-|\alpha|},\\
\left|
\pd_\xi^\alpha (\tau \pd_\tau)^n
\left\{\frac{(\xi_a \xi_b, \lambda^{1/2}\xi_a, \lambda)\xi_j \xi_k \xi_\ell}{P_2(\xi, \lambda)}\right\}
\right|
&\leq C_{\alpha, \epsilon}|\xi|^{1-|\alpha|}
\end{align*}
which combined with Lemma \ref{kernel} \thetag2 
yields \eqref{rbdd u} in Theorem \ref{thm:Rbdd}.

We next consider $\CR$-boundedness of $\CT_\lambda \CB(\lambda)$.
Using Lemma \ref{spectrum2} and Leibniz's rule, for any 
$(\xi, \lambda) \in (\BR^N \setminus \{0\}) \times \Sigma_{\sigma, \lambda_0}$,
we have
\begin{align*}
\left|
\pd_\xi^\alpha (\tau \pd_\tau)^n
\left\{
\frac{(\xi_a \xi_b \xi_c, \lambda^{1/2}\xi_a \xi_b)\xi_j}{P_2(\xi, \lambda)}
\right\}
\right|
&\leq C_{\alpha, \epsilon}|\xi|^{-|\alpha|},\\
\left|
\pd_\xi^\alpha (\tau \pd_\tau)^n
\left\{
\frac{(\xi_a \xi_b \xi_c, \lambda^{1/2}\xi_a \xi_b)\xi_j \xi_k\xi_\ell}{P_2(\xi, \lambda)|\xi|^2}
\right\}
\right|
&\leq C_{\alpha, \epsilon}|\xi|^{-|\alpha|},\\
\left|
\pd_\xi^\alpha (\tau \pd_\tau)^n
\left\{
\frac{(\xi_a \xi_b \xi_c, \lambda^{1/2}\xi_a \xi_b)|\xi|^2\xi_j\xi_k}{(\lambda + |\xi|^2 +a)P_2(\xi, \lambda)}
\right\}
\right|
&\leq C_{\alpha, \epsilon}|\xi|^{1-|\alpha|},\\
\left|
\pd_\xi^\alpha (\tau \pd_\tau)^n
\left\{
\frac{(\xi_a \xi_b \xi_c, \lambda^{1/2}\xi_a \xi_b)\xi_j \xi_k \xi_\ell \xi_m}
{(\lambda + |\xi|^2 +a)P_2(\xi, \lambda)}
\right\}
\right|
&\leq C_{\alpha, \epsilon}|\xi|^{1-|\alpha|},\\
\left|
\pd_\xi^\alpha (\tau \pd_\tau)^n
\left\{
\frac{(\xi_a \xi_b \xi_c, \lambda^{1/2}\xi_a \xi_b)}{\lambda + |\xi|^2+a}
\right\}
\right|
&\leq C_{\alpha, \epsilon}|\xi|^{1-|\alpha|},\\
\left|
\pd_\xi^\alpha (\tau \pd_\tau)^n
\left\{
\frac{(\xi_a \xi_b \xi_c, \lambda^{1/2}\xi_a \xi_b)}{P_1(\xi, \lambda)}
\right\}
\right|
&\leq C_{\alpha, \epsilon}|\xi|^{-|\alpha|},
\end{align*}
which combined with Lemma \ref{kernel} \thetag1, \thetag2,
and Lemma \ref{lem:5.3} 
gives
\begin{equation}\label{rbdd b1}
\begin{aligned}
\CR_{\CL(W^{0, 1}_q(\BR^N), L_q(\BR^N)^{N^5})}
(\{(\tau \pd_\tau)^n (\nabla^3 \CB(\lambda)) \mid \lambda \in \Sigma_{\sigma, \lambda_0}\})
&\leq C_{N, q}, \\
\CR_{\CL(W^{0, 1}_q(\BR^N), L_q(\BR^N)^{N^4})}
(\{(\tau \pd_\tau)^n (\lambda ^{1/2} \nabla^2 \CB (\lambda)) \mid \lambda \in \Sigma_{\sigma, \lambda_0}\})
&\leq C_{N, q}.
\end{aligned}
\end{equation}
Analogously, 
for any 
$(\xi, \lambda) \in (\BR^N \setminus \{0\}) \times \Sigma_{\sigma, \lambda_0}$,
we have
\begin{align*}
\left|
\pd_\xi^\alpha (\tau \pd_\tau)^n
\left\{
\frac{\lambda \xi_j}{P_2(\xi, \lambda)}
\right\}
\right|
&\leq C_{\alpha, \epsilon}(|\lambda|^{1/2} + |\xi|)^{-1}
|\xi|^{-|\alpha|},\\
\left|
\pd_\xi^\alpha (\tau \pd_\tau)^n
\left\{
\frac{\lambda \xi_j \xi_k\xi_\ell}{P_2(\xi, \lambda)|\xi|^2}
\right\}
\right|
&\leq C_{\alpha, \epsilon}(|\lambda|^{1/2} + |\xi|)^{-1}
|\xi|^{-|\alpha|},\\
\left|
\pd_\xi^\alpha (\tau \pd_\tau)^n
\left\{
\frac{\lambda |\xi|^2\xi_j\xi_k}{(\lambda + |\xi|^2 +a)P_2(\xi, \lambda)}
\right\}
\right|
&\leq C_{\alpha, \epsilon}|\xi|^{-|\alpha|},\\
\left|
\pd_\xi^\alpha (\tau \pd_\tau)^n
\left\{
\frac{\lambda \xi_j \xi_k \xi_\ell \xi_m}
{(\lambda + |\xi|^2 +a)P_2(\xi, \lambda)}
\right\}
\right|
&\leq C_{\alpha, \epsilon}|\xi|^{-|\alpha|},\\
\left|
\pd_\xi^\alpha (\tau \pd_\tau)^n
\left\{
\frac{\lambda}{\lambda + |\xi|^2+a}
\right\}
\right|
&\leq C_{\alpha, \epsilon}|\xi|^{-|\alpha|},\\
\left|
\pd_\xi^\alpha (\tau \pd_\tau)^n
\left\{
\frac{\lambda}{P_1(\xi, \lambda)}
\right\}
\right|
&\leq C_{\alpha, \epsilon}(|\lambda|^{1/2} + |\xi|)^{-2}
|\xi|^{-|\alpha|},
\end{align*}
which combined with Lemma \ref{kernel} \thetag2, \thetag3,
and Lemma \ref{lem:5.3}
gives
\begin{equation}\label{rbdd b2}
\CR_{\CL(W^{0,1}_q(\BR^N), W^1_q(\BR^N)^{N^2})}
(\{(\tau \pd_\tau)^n (\lambda \CB(\lambda)) \mid \lambda \in \Sigma_{\sigma, \lambda_0}\})
\leq C_{N, q}.
\end{equation}
\eqref{rbdd b1} and \eqref{rbdd b2} imply \eqref{rbdd q} in Theorem \ref{thm:Rbdd}.

We finally show the uniqueness of solutions.
Let $(\bu, \bQ) \in W^2_q(\BR^N)\times W^3_q(\BR^N)^{N^2}$ 
satisfy the homogeneous equations:
\begin{equation}\label{r1}\left\{
\begin{aligned}
&\lambda \bu -\Delta \bu + \nabla \fp +\beta \dv \Delta \bQ = 0,
\quad \dv \bu=0
& \quad&\text{in $\R^N$}, \\
&\lambda \bQ + \beta \bD(\bu) -\Delta \bQ + a\left(\bQ - \frac{1}{N} \tr \bQ \bI \right)
=O & \quad&\text{in $\R^N$}. \\
&\dv \bu = 0& \quad&\text{in $\R^N$}.\end{aligned}\right.
\end{equation}
Applying formulas \eqref{p}, \eqref{uq1}, \eqref{uq2} and  \eqref{uq3} with
$\bff=0$ and $\BG=0$ yields that $P_2(\lambda)\bu=0$. If we set
$P_2(\xi, \lambda) = (\lambda+|\xi|^2)(\lambda+|\xi|^2+a) +\beta^2|\xi|^4$,
Fourier transformation $\hat \bu$ of $\bu$ satisfies $P_2(\xi, \lambda)\hat \bu
=0$, which, combined with Lemma \ref{spectrum} \thetag3, yields that 
$\bu=0$.  Thus, by \eqref{uq2}, $(\lambda + |\xi|^2+a)\hat \BQ=0$, 
where $\hat \BQ$ denotes Fourier transformation of $\BQ$, 
which yields that $\BQ=0$. By \eqref{p}, we also have $\fp=0$.
 This completes the proof of the uniqueness,
and therefore we have proved Theorem \ref{thm:Rbdd}.

\end{proof}

\subsection{A proof of Theorem \ref{thm:mr}}
To prove Theorem \ref{thm:mr}, the key tool is
the Weis
operator valued Fourier multiplier theorem.
Let $\CD(\BR,X)$ and $\CS(\BR,X)$ be the set of all $X$ 
valued $C^{\infty}$ functions having compact support 
and the Schwartz space of rapidly decreasing $X$ 
valued functions, respectively,
while $\CS'(\BR,X)= \CL(\CS(\BR,\BC),X)$. 
Given $M \in L_{1,\rm{loc}}(\BR \backslash \{0\}, \CL(X, Y))$, 
we define the operator $T_{M} : \CF^{-1} \CD(\BR,X)\rightarrow \CS'(\BR,Y)$ 
by
\begin{align}\label{eqTM}
T_M \phi=\CF^{-1}[M\CF[\phi]],\quad (\CF[\phi] \in \CD(\BR,X)). 
\end{align}

\begin{thm}[Weis \cite{W}]\label{Weis}
Let $X$ and $Y$ be two UMD Banach spaces and $1 < p < \infty$. 
Let $M$ be a function in $C^{1}(\BR \backslash \{0\}, \CL(X,Y))$ such that 
\begin{align*}
\CR_{\CL(X,Y)} (\{(\zeta \frac{d}{d\zeta})^{\ell} M(\zeta) \mid
 \zeta \in \BR \backslash \{0\}\}) \leq \kappa < \infty
\quad (\ell =0,1)
\end{align*}
with some constant $\kappa$. 
Then, the operator $T_{M}$ defined in \eqref{eqTM} 
is extended to a bounded linear operator from
 $L_{p}(\BR,X)$ into $L_{p}(\BR,Y)$. 
Moreover, denoting this extension by $T_{M}$, we have 
\begin{align*}
\|T_{M}\|_{\CL(L_p(\BR,X),L_p(\BR,Y))} \leq C\kappa
\end{align*}
for some positive constant $C$ depending on $p$, $X$ and $Y$. 
\end{thm}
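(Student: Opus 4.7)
The plan is to follow the standard scheme for proving operator-valued Fourier multiplier theorems by combining a Littlewood--Paley decomposition in frequency with the UMD randomization principle, using the $\CR$-boundedness hypothesis as a substitute for the scalar Mihlin condition. By density arguments (mollification of $M$ away from the origin, and truncation/mollification of $\phi$ with $\CF \phi \in \CD(\BR, X)$) it suffices to establish the a priori bound $\|T_M\phi\|_{L_p(\BR,Y)} \leq C\kappa \|\phi\|_{L_p(\BR,X)}$ and then extend by continuity.

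First, I would fix a smooth dyadic partition $\{\psi_k\}_{k \in \BZ}$ with $\psi_k$ supported in $\{2^{k-1} \leq |\zeta| \leq 2^{k+1}\}$ and $\sum_k \psi_k \equiv 1$ on $\BR \setminus \{0\}$, set $\phi_k = T_{\psi_k} \phi$, and decompose $T_M \phi = \sum_k T_{M_k} \phi_k$ with $M_k = \tilde\psi_k M$ for a slightly enlarged cutoff $\tilde\psi_k$ identically $1$ on the support of $\psi_k$. Because $X$ and $Y$ are UMD, the vector-valued Littlewood--Paley theorem (Bourgain, McConnell) combined with Khintchine's inequality provides the equivalence
\begin{equation*}
\Bigl\|\sum_k g_k\Bigr\|_{L_p(\BR, Z)}
\sim \Bigl(\int_0^1 \Bigl\|\sum_k r_k(u) g_k\Bigr\|_{L_p(\BR, Z)}^p\,du\Bigr)^{1/p},
\end{equation*}
uniformly for any sequence $\{g_k\}$ with $\CF g_k$ supported in the dyadic annuli, and for $Z = X$ or $Z = Y$. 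This reduces the desired inequality to controlling the Rademacher sum of $\{T_{M_k}\phi_k\}$ in $Y$ by the Rademacher sum of $\{\phi_k\}$ in $X$, with constant proportional to $\kappa$.

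Second, I would exploit $\CR$-boundedness through the representation
\begin{equation*}
M(\zeta) = M(2^k) + \int_{2^k}^{\zeta} \bigl(sM'(s)\bigr)\,\frac{ds}{s}, \qquad \zeta \in \mathrm{supp}\,\tilde\psi_k,
\end{equation*}
which displays each $M_k(\zeta)$ as a bounded average, with weights depending only on the fixed profile $\tilde\psi$, of operators drawn from the two $\CR$-bounded families $\{M(\zeta) : \zeta \in \BR\setminus\{0\}\}$ and $\{\zeta M'(\zeta) : \zeta \in \BR\setminus\{0\}\}$, each with $\CR$-bound at most $\kappa$. Applying Kahane's contraction principle pointwise in the Rademacher variable $u \in [0,1]$, this transfers the $\CR$-bound into an $L_p(\BR,Y)$ estimate of the form $C\kappa$ times the Rademacher sum of $\{\phi_k\}$ in $L_p(\BR,X)$, and the two Littlewood--Paley equivalences then close the argument.

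The hardest step will be the transfer in the third paragraph: one must realize each $T_{M_k}$ as a Bochner average of pointwise multiplications by operators from the $\CR$-bounded family, commute the Fourier multiplier with this Bochner integral, and then invoke the contraction principle at each fixed value of $u$ so that $\CR$-boundedness (via its defining inequality on Rademacher sums) dominates the outer $L_p$-norm. It is precisely here that $\CR$-boundedness---strictly stronger than uniform boundedness in operator norm---is indispensable: only the $\CR$-bound is compatible with the Rademacher randomization and hence with the summation of the infinitely many dyadic pieces.
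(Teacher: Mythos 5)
The paper offers no proof of this statement: it is quoted, with attribution, as Weis's operator-valued Fourier multiplier theorem from \cite{W} and is used as a black box, so there is no internal argument to compare yours against. Judged on its own terms, your outline reproduces the standard (essentially Weis's original) proof and its architecture is sound: a dyadic Littlewood--Paley decomposition, Bourgain's randomized Littlewood--Paley inequalities in UMD spaces to pass between $\|\sum_k g_k\|_{L_p}$ and Rademacher averages, and the fundamental-theorem-of-calculus representation of $M$ on each annulus exhibiting $M_k(\zeta)$ as a bounded average of members of the two $\CR$-bounded families, closed by Kahane's contraction principle. This is a correct proof outline rather than a complete proof, and two of the suppressed points deserve flagging. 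First, the two-sided ``equivalence'' you assert for arbitrary $g_k$ with dyadically supported transforms is more than is true and more than you need; the argument actually uses the pair of one-sided estimates $\|\sum_k g_k\|_{L_p(\BR,Y)}\leq C\,\|\sum_k r_k g_k\|_{{\rm rad}}$ (applied to $g_k=T_{M_k}\phi_k$) and $\|\sum_k r_k \Delta_k\phi\|_{{\rm rad}}\leq C\,\|\phi\|_{L_p(\BR,X)}$, both valid in UMD spaces. Second, in the transfer step the upper limit of $\int_{2^k}^{\zeta}(sM'(s))\,ds/s$ depends on the frequency variable, so after commuting $\CF^{-1}$ with the $s$-integral one is left not with ``pointwise multiplications'' but with the scalar symbols $\mathbf{1}_{[s,\infty)}(\zeta)\tilde{\psi}_k(\zeta)$, i.e.\ truncated Riesz projections composed with the constant operators $sM'(s)$; the uniform (indeed $\CR$-) boundedness of these projections on $L_p(\BR,X)$ is a second, independent use of the UMD hypothesis, and the defining Rademacher inequality for $\CR$-boundedness must be upgraded from a pointwise-in-$t$ statement to an $L_p(\BR)$-integrated one via Fubini and the contraction principle. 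Both gaps are standard and fixable, and negative frequencies require the symmetric representation based at $-2^k$; with those details supplied your sketch becomes a complete proof.
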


We now prove Theorem \ref{thm:mr}. 
Using the linear operator $\CA$ defined by \eqref{op},
we can rewrite problem \eqref{l0} as follows:
\begin{equation}\label{rl0}
\pd_t \bU - \CA \bU = \bF \text{ in $\R^N$ for $t>0$}, \enskip
\bU|_{t=0} = \bU_0,
\end{equation}
where $\bU=(\bu, \bQ)$, $\bF=(P\bff, \bg)$,
and $\bU_0=(P\bu_0, \bQ_0)$.
Let $\bF \in L_{p, \gamma_1} (\BR_+, W^{0, 1}_q(\BR^N))$
and let $\bF_0$ be the zero extension of $\bF$
to $t < 0$.
Here, $\gamma_1$ is the same as in Theorem \ref{thm:semi1} and
Theorem \ref{thm:semi2}. 
To solve problem \eqref{rl0},  we  consider problem:
\begin{equation}\label{l1}
\pd_t \bU_1 - \CA \bU_1 = \bF_0
\quad \text{ in $\R^N$ for $t \in \BR$}.
\end{equation}
Here,  $\bF_0= (\bff_0, \BG_0)$ and 
$\bff_0$ and $\BG_0$ are zero extensions of $\bff$ and $\BG$ to 
$t < 0$. To solve equations \eqref{l1}, 
we introduce Laplace transformation $\CL$ and Laplace inverse 
transformation $\CL^{-1}$ defined by 
$$\CL[f](\lambda) = \int_{\BR}e^{-\lambda t}f(t)\,dt, \quad
 \CL^{-1}[g](t)= \frac{1}{2\pi}\int_{\BR}e^{\lambda t}g(\tau)\,d\tau$$
where $\lambda = \gamma + i\tau \in\BC$, which are written by Fourie transformation
$\CF$ and Fourier inverse transformation in $\BR$ as 
$$\CL[f](\lambda) = \CF[e^{-\gamma t}f(t)](\tau),
\quad \CL^{-1}[g](t) = e^{\gamma t}\CF^{-1}[g](\tau).$$
Applying Laplace transformation to equations \eqref{l1} yields
$$
\lambda \CL[\bU_1] -\CA \CL[\bU_1] = \CL[\bF_0] 
\quad\text{in $\R^N$}. 
$$
Applying the operators $\CA(\lambda)$ and $\CB(\lambda)$ given in
Theorem \ref{thm:Rbdd} yields that
$$\CL[\bU_1](\lambda) = (\CA(\lambda)\CL[\bF_0](\lambda), 
\CB(\lambda)\CL[\bF_0](\lambda)).
$$
Since $\bF_0 = 0$ for $t < 0$, $\CL[\bF_0](\lambda)$ is holomorphic for 
${\rm Re}\,\lambda > 0$, and so by Cauchy's theorem in theory of one complex
variable we see that
\begin{equation}\label{ident:1}
\CL[\bU_1](\gamma_1 + i\tau) = \CL[\bU_1](\gamma + i\tau)
\quad (\gamma > \lambda_0, \enskip \tau \in \BR).
\end{equation}
Using Laplace inverse transformation, we define $\bU_1$ by 
$$\bU_1(\cdot, t) = \CL^{-1}[(\CA(\lambda)\CL[\bF_0](\lambda), 
\CB(\lambda)\CL[\bF_0](\lambda))](t).$$
Since we can write
$$e^{-\gamma t}\bU_1 = \CF^{-1}[(\CA(\lambda)\CF[e^{-\gamma t}\bF_0](\tau), 
\CB(\lambda)\CF[e^{-\gamma t}\bF_0](\tau))]
$$
using Theorem \ref{thm:Rbdd} and applying Theorem \ref{Weis}, 
we have
\begin{equation}\label{ident:2}
\|e^{-\gamma t}\bU_1\|_{L_p(\BR, W^{2,3}_q(\BR^N))} 
+ \|e^{-\gamma t}\pd_t\bU_1\|_{L_p(\BR, W^{0,1}_q(\BR^N))}
\leq C\|e^{-\gamma t}\bF\|_{L_p(\BR_+, W^{0,1}_q(\BR^N))}.
\end{equation}
Since $|(\tau\pd_\tau)^\ell\gamma/\lambda| \leq C$ for $\lambda
\in \Sigma_{\epsilon, \lambda_0}$, we have
$$\gamma \|e^{-\gamma t}\bU_1\|_{L_p(\BR, W^{0,1}_q(\BR^N))}
\leq C\|e^{-\gamma t}\pd_t \bU_1\|_{L_p(\BR, W^{0,1}_q(\BR^N))}
$$
as follows from \cite[Proposition 3.6 and Corollary 3.7]{DHP},
which, combined with \eqref{ident:2},  yields that
\begin{align*}
&\gamma \|\bU_1\|_{L_p((-\infty, 0), W^{0,1}_q(\BR^N))}
\leq \gamma \|e^{-\gamma t}\bU_1\|_{L_p(\BR, W^{0,1}_q(\BR^N))}
\leq \|e^{-\gamma t}\pd_t\bU_1\|_{L_p(\BR, W^{0,1}_q(\BR^N))} \\
&\quad 
\leq C\|e^{-\gamma t}\pd_t \bU_1\|_{L_p(\BR, W^{0,1}_q(\BR^N))}
\leq C\|e^{-\gamma_1 t}\pd_t\bU_1\|_{L_p(\BR, W^{0,1}_q(\BR^N))}.
\end{align*}
for any $\gamma \geq \gamma_1 >\lambda_0$. Thus, letting $\gamma \to \infty$
yields that $\|\bU_1\|_{L_p((-\infty, 0), W^{0,1}_q(\BR^N))}=0$, 
which implies that $\bU_1(\cdot, t)=0$ for $t < 0$. By trace theorem in
theory of real interpolation, $\bU_1$ is a $D_{q,p}(\BR^N)$ 
valued continuous in $\BR$, we have $\bU_1(\cdot, t)=0$ for $t \leq 0$.

In view of Theorem \ref{thm:semi2}, we set $\bU= \bU_1+e^{\CA t}(\bu_0, \BQ_0)$,
and then $\bU$ is a required solution of equations \eqref{l0}, which proves
the existence part of Theorem \ref{thm:mr}. 

We finally show the uniqueness of solutions.  
By Theorem \ref{thm:semi1} and Duhamel's principle
we can write $\bU$ as follows:
\[
\bU = e^{\CA t} \bU_0 + \int^t_0 e^{\CA (t-s)} \bF(s)\, ds.
\]
Thus, if $\bU$ satisfies the equation \eqref{rl0} with $\bF = 0$, $\bU_0 = 0$,
we have $\bU=0$.
This completes the proof of Theorem \ref{thm:mr}.


\section{Decay property of solutions to the linearized problem}\label{sec:decay}
In this section, we consider the following linearized problem:
\begin{equation}\label{l3}\left\{
\begin{aligned}
&\pd_t \bu -\Delta \bu + \nabla \fp +\beta \DV \Delta \bQ = 0,
\enskip \dv \bu=0
& \quad&\text{in $\R^N$ for $t \in (0, T)$}, \\
&\pd_t \bQ - \beta \bD(\bu) -\Delta \bQ + a\left(\bQ - \frac{1}{N} \tr \bQ \bI \right)
=O & \quad&\text{in $\R^N$ for $t \in (0, T)$}, \\
&(\bu, \bQ)|_{t=0} = (\bff, \bg)& \quad&\text{in $\R^N$}.
\end{aligned}\right.
\end{equation}
Let $d = \tr \bQ$.
We consider the $L_p$-$L_q$ decay estimates
for $d$, $\bu$, and $\bQ$ satisfying \eqref{l3} below. 

\subsection{Decay estimates for $d$}
We first consider the decay estimates for $d$.
Taking divergence of the second equation of \eqref{l3}
and using $\dv \bu =0$,
$d$ satisfies the heat equations:
\begin{equation}\label{d}
\pd_t d - \Delta d = 0 \enskip \text{in $\R^N$ for $t \in (0, T)$},
\enskip
d|_{t=0} = d_0 \enskip \text{in $\R^N$},
\end{equation}
where $d_0 = \tr \bQ_0$.
By the the $L_p$-$L_q$ estimate for the heat semigroup
we see that $d$ satisfies 
\begin{equation}\label{esd}
\|\nabla^j d \|_{L_p(\R^N)}
\leq C
t^{-\frac{N}{2} (\frac{1}{q} - \frac{1}{p}) - \frac{j}{2}}
\|d_0\|_{L_q(\R^N)}
\end{equation}
for any $t>0$,
where $d_0 \in L_q(\R^N)$, $j \in \N_0$, and $1 \leq q \leq p \leq \infty$.

\subsection{Decay estimates for $\bu$ and $\bQ$}
We next consider the decay estimates for $\bu$ and $\bQ$.
Theorem \ref{thm:semi2} implies that 
there exist operators
\begin{equation}\label{opu}
S(t) \in \CL(X_q(\R^N), W^2_q(\BR^N)^N),
\enskip
T(t) \in \CL(X_q(\R^N), W^3_q(\BR^N)^{N^2}) 
\end{equation}
such that for any $(\bff, \bg) \in X_q(\R^N)$,
$\bu = S(t)(\bff, \bg)$ and $\bQ = T(t)(\bff, \bg)$ satisfy \eqref{l3}.
We shall prove the $L_p$-$L_q$ decay estimates of operators
$S(t)$ and $T(t)$.
For this purpose, we decompose the solution into low and high frequency parts.
Let $\vp \in C^\infty_0 (\BR^N)$ be a function such that
$0 \leq \vp(\xi) \leq 1$, $\vp(\xi) = 1$ if $|\xi| \leq 1/3$ and 
$\vp(\xi) = 0$ if $|\xi| \geq 2/3$.
Let $\vp_0$ and $\vp_\infty$ be functions such that
\[
\vp_0(\xi) = \vp(\xi/A_0), \enskip
\vp_\infty(\xi) = 1 - \vp(\xi/A_0),
\]
where $A_0 \in (0, 1)$ is a  sufficiently small number, 
which determined in subsection \ref{subsection low}.
Moreover, we set 
\begin{align*}
&S_n(t)(\bff, \bg)
=\frac{1}{2\pi i} \sum^2_{j = 1}
\CF^{-1} \left[\int_\Gamma e^{\lambda t} 
\ell_j(\xi, \lambda) \vp_n \hat \bff \,d\lambda \right](x)
+ \frac{1}{2\pi i} \sum^2_{j = 1}\CF^{-1} \left[\int_\Gamma e^{\lambda t} 
m_j(\xi, \lambda) \vp_n \widehat{\DV\bg} \,d\lambda \right](x),\\
&T_n(t)(\bff, \bg)
=\frac{1}{2\pi i} \sum^2_{j = 1}
\CF^{-1} \left[\int_\Gamma e^{\lambda t} 
\tilde \ell_j(\xi, \lambda) \vp_n \hat \bff \,d\lambda \right](x)
+ \frac{1}{2\pi i} \sum^2_{j = 1}\CF^{-1} \left[\int_\Gamma e^{\lambda t} 
\tilde m_j(\xi, \lambda) \vp_n \widehat{\DV\bg} \,d\lambda \right](x)\\
&\enskip + \frac{1}{2\pi i} \CF^{-1} \left[\int_\Gamma e^{\lambda t} 
\tilde m_3(\xi, \lambda) \vp_n \hat{\bg} \,d\lambda \right](x)
+\frac{1}{2\pi i} \frac{a}{N}\CF^{-1} \left[\int_\Gamma e^{\lambda t} 
\tilde m_3(\xi, \lambda) \vp_n{\rm tr}\,\hat \BQ
 \bI \,d\lambda \right](x),
\end{align*}
where $n=0$, $\infty$,
\begin{align*}
&\ell_1(\xi, \lambda) \hat \bff = \frac{\lambda + |\xi|^2 + a}{P_2(\xi, \lambda)}\hat \bff,& 
&\ell_2(\xi, \lambda) \hat \bff = \frac{\lambda + |\xi|^2 + a}{P_2(\xi, \lambda)} \frac{\xi}{|\xi|^2} \xi \cdot \hat \bff, \\
&m_1(\xi, \lambda) \widehat{\DV\bg} = -\frac{\beta |\xi|^2}{P_2(\xi, \lambda)}\widehat{\DV\bg},&
&m_2(\xi, \lambda) \widehat{\DV\bg} = -\frac{\beta \xi}{P_2(\xi, \lambda)}\xi\cdot \widehat{\DV\bg},\\
&\tilde \ell_1(\xi, \lambda) \hat \bff 
= \frac{\beta}{P_2(\xi, \lambda)}(i\xi \otimes \hat \bff - \hat \bff \otimes \xi),& 
&\tilde \ell_2(\xi, \lambda) \hat \bff = \frac{2\beta}{P_2(\xi, \lambda)} \frac{\xi \otimes \xi}{|\xi|^2} \xi \cdot \hat \bff, \\
&\tilde m_1(\xi, \lambda) \widehat{\DV\bg} 
= \frac{\beta^2 |\xi|^2(\xi \otimes \widehat{\DV\bg} + \widehat{\DV\bg} \otimes \xi)}
{(\lambda + |\xi|^2 + a)P_2(\xi, \lambda)}, \\
&\tilde m_2(\xi, \lambda) \widehat{\DV\bg} 
= \frac{2\beta^2 \xi \otimes \xi}{(\lambda + |\xi|^2 + a)P_2(\xi, \lambda)} \xi \cdot \widehat{\DV\bg},&
&\tilde m_3(\xi, \lambda) \hat{\bg} = \frac{1}{\lambda + |\xi|^2 + a} \hat{\bg}.
\end{align*}
Here, we set the integral path 
$\Gamma = \Gamma^+ \cup \Gamma^-$ as follows:
\begin{equation}\label{path}
\Gamma^\pm
= \{\lambda \in \C \mid 
\lambda = \tilde \lambda_0 (\sigma) + s e^{\pm i(\pi-\sigma)},
s: 0 \to \infty \}
\end{equation}
for some $\sigma_0 < \sigma < \pi/2$ with
$\tilde{\lambda}_0 (\sigma) = 2\lambda_0(\sigma)/ \sin \sigma$,
where $\lambda_0 (\sigma)$ is the same as in Theorem \ref{thm:Rbdd}.
We then have the following $L_p$-$L_q$ decay estimates for $S(t)$ and $T(t)$.

\begin{thm}\label{decay for u}
Let $S(t)$ and $T(t)$ be the solution operators of \eqref{l3} 
such that $\bu = S(t)(\bff, \bg)$ and $\bQ = T(t)(\bff, \bg)$
 for $(\bff, \bg) \in X_q(\R^N)$.
Then, the following assertions hold.
\begin{enumerate}
\item\label{t>0}
If $t \geq 1$, $S(t)$ and $T(t)$ are decomposed as follows: 
\[
S(t)(\bff, \bg) = S_0(t)(\bff, \bg) + S_\infty(t)(\bff, \bg),
\enskip
T(t)(\bff, \bg) = T_0(t)(\bff, \bg) + T_\infty(t)(\bff, \bg)
\]
satisfying the following estimates.
\begin{enumerate}
\item\label{es low}
Let $1\leq q \leq 2 \leq p \leq \infty$.
\begin{align}
\|\pd^k_t S_0(t) (\bff, \bg)\|_{L_p(\R^N)}
&\leq C
t^{-\frac{N}{2} (\frac{1}{q} - \frac{1}{p}) - k}
\|(\bff, \bg)\|_{W^{0, 1}_q(\R^N)}
\enskip (q \neq 2), \label{esemi1} \\
\|\nabla^j S_0(t) (\bff, \bg)\|_{L_p(\R^N)}
&\leq C
t^{-\frac{N}{2} (\frac{1}{q} - \frac{1}{p}) - \frac{j}{2}}
\|(\bff, \bg)\|_{W^{0, 1}_q(\R^N)},
\label{esemi1'} \\
\|\nabla^{j+1} T_0(t) (\bff, \bg)\|_{L_p(\R^N)}
&\leq C
t^{-\frac{N}{2} (\frac{1}{q} - \frac{1}{p}) - \frac{j}{2}}
\|(\bff, \bg)\|_{W^{0, 1}_q(\R^N)} \label{est} 
\end{align}
with $j \in \N$, $k \in \N_0$ and 
some positive constant $C$ depending on 
$j$, $k$, $p$, and $q$.
In addition, 
\begin{equation}\label{t0}
\|\pd_t^k \nabla^j T_0 (t) (\bff, \bg)\|_{L_p(\R^N)}
\leq C
t^{-\frac{N}{2} (\frac{1}{q} - \frac{1}{p})- \frac{j}{2}-k}
\|(\bff, \bg)\|_{W^{0, 1}_q(\R^N)} \enskip (q \neq 2)
\end{equation}
with $j, k \in \N_0$ and  
some positive constant $C$ depending on 
$j$, $k$, $p$ and $q$.


\item\label{es high}
Let $1<p<\infty$.
\begin{align*}
&\|\pd_t (S_\infty(t)(\bff, \BG), T_\infty(t)(\bff, \BG))\|_{W^{0,1}_p(\BR^N)}
+ \|(S_\infty(t)(\bff, \BG), T_\infty(t)(\bff, \BG))\|_{W^{2,3}_p(\BR^N)}\\
&\quad\leq Ce^{-\gamma_\infty t}\|(\bff, \BG)\|_{W^{0,1}_p(\BR^N)}
\end{align*}
with  
some positive constants $C$ and $\gamma_\infty$. 

\end{enumerate}


\item \label{0<t<1}
Let $1 < q \leq p \leq \infty$ and 
$(p, q) \neq (\infty, \infty)$.
If $0 < t \leq 1$, $S(t) (\bff, \bg)$ and $T(t) (\bff, \bg)$ satisfy the following estimate:
\begin{equation}\label{t small}
\begin{aligned}
&\|\nabla^j (S(t) (\bff, \bg), T(t) (\bff, \bg))\|_{W^{0, 1}_p(\R^N)}
\leq C
t^{-\frac{N}{2} (\frac{1}{q} - \frac{1}{p}) - \frac{j}{2}}
\|(\bff, \bg)\|_{W^{0, 1}_q(\R^N)},\\
&\|\pd_t (S(t) (\bff, \bg), T(t) (\bff, \bg))\|_{W^{0, 1}_p(\R^N)}
\leq C
t^{-\frac{N}{2} (\frac{1}{q} - \frac{1}{p}) - 1}
\|(\bff, \bg)\|_{W^{0, 1}_q(\R^N)}
\end{aligned}
\end{equation}
with 
$j = 0, 1$ and some positive constant $C$ depending on 
$p$ and $q$.
Moreover,
\begin{equation}\label{esemi2}
\begin{aligned}
&\|\nabla^j (S(t) (\bff, \bg), T(t) (\bff, \bg))\|_{W^{0, 1}_q(\R^N)}
\leq C
\|(\bff, \bg)\|_{W^{j, j+1}_q(\R^N)}
\quad(j=0,1,2),\\
&\|\pd_t (S(t) (\bff, \bg), T(t) (\bff, \bg))\|_{W^{0, 1}_q(\R^N)}
\leq C
\|(\bff, \bg)\|_{W^{2, 3}_q(\R^N)}
\end{aligned}
\end{equation}
with some positive constant $C$.
\end{enumerate}
\end{thm}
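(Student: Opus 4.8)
The plan is to obtain all the decay estimates by representing the semigroup via the inverse Laplace transform along the path $\Gamma$ and splitting into low and high frequency parts through the cutoffs $\vp_0$ and $\vp_\infty$. For the high frequency part (assertion \ref{es high}), I would deform $\Gamma$ into a contour lying strictly in the left half-plane: since on $\mathrm{supp}\,\vp_\infty$ we have $|\xi|\geq A_0$, Lemma \ref{spectrum} \thetag3 and the fact that the roots $\lambda_\pm$ of $P_2$ satisfy $\mathrm{Re}\,\lambda_\pm \leq -c|\xi|^2 + O(1) \leq -\gamma_\infty$ for a suitable $\gamma_\infty>0$ force the symbols $\ell_j, m_j, \tilde\ell_j, \tilde m_j$ to be holomorphic and suitably decaying in a region $\mathrm{Re}\,\lambda \geq -\gamma_\infty$; combining the resulting $e^{\lambda t}$ factor with $e^{-\gamma_\infty t}$ and applying the Fourier multiplier estimates (Lemma \ref{kernel} together with Theorem \ref{Weis}-type bounds already encoded in Theorem \ref{thm:Rbdd}) yields the claimed exponential decay in $W^{0,1}_p$ and $W^{2,3}_p$.

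For the low frequency part (assertion \ref{es low}), the idea is the standard one: on $\mathrm{supp}\,\vp_0$ one uses the asymptotic expansions \eqref{p2 low}, namely $\lambda_+ = -|\xi|^2 + O(|\xi|^4)$ and $\lambda_- = -|\xi|^2-a+O(|\xi|^4)$, so that the branch near $\lambda_+$ behaves like the heat kernel while the branch near $\lambda_-$ carries an extra factor bounded by $(|\xi|^2+a)^{-1}$ and is therefore even better behaved. Shifting the contour to wrap around these eigenvalues (or equivalently evaluating the residue and estimating directly), each symbol reduces to a finite sum of terms of the form $\xi^\gamma e^{-c|\xi|^2 t}$ (possibly times a harmless bounded factor), and the $L_q\to L_p$ bound
\[
\|\CF^{-1}[\xi^\gamma e^{-c|\xi|^2 t}\vp_0\hat h]\|_{L_p(\R^N)} \leq C t^{-\frac{N}{2}(\frac1q-\frac1p)-\frac{|\gamma|}{2}}\|h\|_{L_q(\R^N)}
\]
for $1\leq q\leq 2\leq p\leq\infty$ follows from Hausdorff--Young together with Hölder on the multiplier (the $q\neq 2$ restriction in \eqref{esemi1} and \eqref{t0} being exactly where one needs strict inequality to get the $L^1$-type bound on the multiplier). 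Counting derivatives: $S_0$ involves $\ell_j$ with the factor $\lambda+|\xi|^2+a$ over $P_2$, which is $O(1)$, so one extra $\nabla$ costs $t^{-1/2}$; $T_0$ always carries at least one factor $i\xi$ (through $\tilde\ell_j$) or $|\xi|^2$ in the numerator relative to $(\lambda+|\xi|^2+a)^{-1}P_2^{-1}$, which is precisely why $\nabla^{j+1}T_0$ and not $\nabla^{j}T_0$ appears in \eqref{est}. The $d$-term $\frac{a}{N}\tilde m_3 \vp_n \tr\hat\BQ\,\bI$ is handled by feeding the heat estimate \eqref{esd} for $d$ into the $(\lambda+|\xi|^2+a)^{-1}$ multiplier; since $a>0$ this is an exponentially-weighted-in-$a$ but still heat-type bound, consistent with \eqref{t0}.

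For assertion \ref{0<t<1} ($0<t\leq 1$), I would \emph{not} split into low/high frequencies but instead use the full representation $S(t)=S_0(t)+S_\infty(t)$ and note that for $t\leq 1$ both $e^{-\gamma_\infty t}$ and the polynomial $t^{-N(1/q-1/p)/2-j/2}$ are comparable to $1$ up to the singular factor, so the global $L_q\to L_p$ smoothing estimate \eqref{t small} follows by the same Fourier-multiplier argument applied on all of $\R^N$ (here $1<q\leq p\leq\infty$ with $(p,q)\neq(\infty,\infty)$, the exclusion coming from the endpoint failure of the multiplier bound). The estimates \eqref{esemi2} with no time singularity are just the statement that $S(t)$, $T(t)$ map $W^{j,j+1}_q$ boundedly into $W^{0,1}_q$ uniformly for $t\in(0,1]$, which is immediate from Theorem \ref{thm:semi1} (boundedness of the semigroup and of $\pd_t e^{\CA t}$ on $D(\CA)$) together with interpolation; the $\pd_t$ line uses the third estimate in Theorem \ref{thm:semi1}.

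The main obstacle I anticipate is the low-frequency bookkeeping near $\lambda_-\approx -|\xi|^2-a$: one must check that after the contour deformation the contribution of that branch genuinely carries the gain $(|\xi|^2+a)^{-1}$ and that the cross terms in the partial-fraction decomposition $P_2^{-1} = (\lambda_+-\lambda_-)^{-1}[(\lambda-\lambda_+)^{-1}-(\lambda-\lambda_-)^{-1}]$ do not degrade the estimate when $\lambda_+-\lambda_- \to -a \neq 0$ is uniformly bounded away from zero for small $\xi$ (this is where $a>0$ is essential and why the case $a>0$ is singled out in the introduction). A secondary technical point is justifying the contour deformations rigorously — checking decay of the integrands at infinity along $\Gamma$ so Cauchy's theorem applies, and checking the symbols' derivative bounds needed for Lemma \ref{kernel} hold uniformly along the deformed contours — but this is routine given Lemma \ref{lem:spectrum2}.
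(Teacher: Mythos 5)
Your strategy matches the paper's in its large-scale architecture (contour representation of the semigroup, low/high frequency split via $\vp_0, \vp_\infty$, exponential decay from pushing $\Gamma$ into the left half-plane for high frequencies, heat-kernel-type multipliers for low frequencies, and analyticity for $0<t\le 1$), so the proposal is viable. Where it differs is the execution of the low-frequency estimates: you propose to compute residues at $\lambda_\pm$ and reduce each symbol to $\xi^\gamma e^{-c|\xi|^2 t}$-type multipliers, whereas the paper instead deforms $\Gamma$ into three explicit pieces $\Gamma_1^\pm$ (a small arc around $\lambda_+\approx -|\xi|^2$), $\Gamma_2^\pm$ (a connecting segment), $\Gamma_3^\pm$ (rays to infinity) and proves three separate kernel lemmas (Lemmas~\ref{gamma1}--\ref{gamma3}), estimating the contour integrals directly without isolating residues. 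Your residue route is morally equivalent but would require explicit control of $1/(\lambda_+-\lambda_-)$ and of $e^{(\lambda_+ +|\xi|^2)t}$ (which is where $a>0$ and the $O(|\xi|^4)$ corrections to \eqref{p2 low} enter), while the paper's contour split sidesteps explicit root formulas; the paper's $\Gamma_2$-estimate is exactly where the $q\ne 2$ restriction arises, via a $|\xi|^{-\delta}$/Young's-inequality device that needs $r>1$, a mechanism you allude to only vaguely. Two further points you gloss over: (i) the $\Gamma_4$-contribution in the high-frequency part produces a harmless $(|\log t|+1)$ factor (see \eqref{s4} and \eqref{log}), which is only absorbed because assertion~(1) is stated for $t\ge 1$; and (ii) for the full-rate estimate \eqref{t0} of $\pd_t^k\nabla^jT_0$ the paper does \emph{not} estimate $T_0$ directly but rewrites it through \eqref{uq2} as $\tilde m_3$ applied to $\bD(\bu)$, $\bg$, and $\tr\hat\BQ$, feeding the already-proved decay \eqref{esemi1} of $S_0$ and the heat estimate \eqref{esd} for $d=\tr\BQ$ into the bound — a bootstrap you would need to replicate or replace by a careful accounting of which residue terms carry the extra $|\xi|$ factors. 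Finally, note that for assertion~(2) the paper's proof actually only establishes the $q=p$ case of \eqref{t small} via the analytic-semigroup bound from the resolvent estimate \eqref{resolvent es}; your proposed Fourier-multiplier route over all of $\R^N$ would run into the same obstruction for $q<p$, since the high-frequency multipliers do not decay in $\xi$ and hence do not yield genuine $L_q$-$L_p$ smoothing.
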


We shall prove Theorem \ref{decay for u}.
For Theorem \ref{decay for u} \eqref{0<t<1},
by \eqref{resolvent es} and $0< t \leq 1$, 
we see that $S(t)(\bff, \bg)$ and $T(t)(\bff, \bg)$ satisfy
\begin{align*}
\|\nabla^j (S(t) (\bff, \bg), T(t) (\bff, \bg))\|_{W^{0, 1}_p(\R^N)}
&\leq C_\sigma e^{\tilde{\lambda}_0(\sigma) t} t^{-\frac{j}{2}} \|(\bff, \bg)\|_{W^{0, 1}_p(\R^N)} \\
&\leq C_\sigma t^{-\frac{j}{2}} \|(\bff, \bg)\|_{W^{0, 1}_p(\R^N)}
\end{align*}
with $j=0, 1, 2$.
Moreover, we have
\begin{align*}
\|\pd_t (S(t) (\bff, \bg), T(t) (\bff, \bg))\|_{W^{0, 1}_p(\R^N)}
&\leq C_\sigma t^{-1} \|(\bff, \bg)\|_{W^{0, 1}_p(\R^N)},
\end{align*}
which implies that \eqref{t small}.
Furthermore, \eqref{esemi2} follows from 
the fact that $\{e^{\CA t}\}_{t \geq 0}$
is continuous analytic semigroup.
Thus, we prove Theorem \ref{decay for u} \eqref{t>0}.

\subsubsection{Analysis of low frequency parts}\label{subsection low}
To prove Theorem \ref{decay for u} \eqref{t>0} (a),
we prepare several lemmas.
Recall that roots of $P_2(\xi, \lambda)$ have 
the following expansion formula:
\begin{equation}\label{expansion}
\lambda_+ = -|\xi|^2 + O(|\xi|^4), \enskip
\lambda_- = -|\xi|^2 - a + O(|\xi|^4).
\end{equation}
We set $\sigma_0 = \tan^{-1}\{(|\xi|^2/8)/|\xi|^2\} = \tan^{-1}1/8$
and
\begin{align*}
\Gamma^{\pm}_1 &=
\{\lambda \in \BC \mid \lambda = -|\xi|^2 + (|\xi|^2/4) e^{\pm is}, 
\enskip s: 0 \to \pi/2 \},\\
\Gamma^{\pm}_2 &=
\{\lambda \in \BC \mid \lambda = -(|\xi|^2(1-s) + \gamma_0 s)
\pm i 
\{(|\xi|^2/4)(1-s) + \tilde{\gamma}_0 s\}, \enskip s: 0 \to 1 \},\\
\Gamma^{\pm}_3 &=
\{\lambda \in \BC \mid \lambda = - (\gamma_0 \pm i\tilde{\gamma}_0)
+ s e^{\pm i(\pi - \sigma_0)}, \enskip s: 0 \to \infty \},
\end{align*}
where 
$\gamma_0 = \lambda_0 (\sigma_0)$ and
$\tilde{\gamma}_0 = (\lambda_0(\sigma) + \tilde{\lambda}_0(\sigma) )/8
= \gamma_0 (2\sqrt{65}+1)/8$.
By Cauchy's integral theorem,
$S_0(t)(\bff, \bg)$ and $T_0(t)(\bff, \bg)$ can be decomposed by 
\[
S_0(t)(\bff, \bg) = \sum^3_{k=1} S_0^k(t)(\bff, \bg),
\enskip
T_0(t)(\bff, \bg) = \sum^3_{k=1} T_0^k(t)(\bff, \bg),
\]
where
\begin{align*}
&S_0^k(t)(\bff, \bg)
=\frac{1}{2\pi i} \sum^2_{j = 1}
\CF^{-1} \left[\int_{\Gamma^+_k \cup \Gamma^-_k}
e^{\lambda t} 
\ell_j(\xi, \lambda) \vp_0 \hat \bff \,d\lambda \right](x)
+
\frac{1}{2\pi i} \sum^2_{j = 1}
\CF^{-1} \left[\int_{\Gamma^+_k \cup \Gamma^-_k}
e^{\lambda t} 
m_j(\xi, \lambda) \vp_0 \widehat{\DV\bg} \,d\lambda \right](x),\\
&T_0^k(t)(\bff, \bg)
=\frac{1}{2\pi i} \sum^2_{j = 1}
\CF^{-1} \left[\int_{\Gamma^+_k \cup \Gamma^-_k} e^{\lambda t} 
\tilde \ell_j(\xi, \lambda) \vp_0 \hat \bff \,d\lambda \right](x)
+ \frac{1}{2\pi i} \sum^2_{j = 1}\CF^{-1} \left[\int_{\Gamma^+_k \cup \Gamma^-_k} 
e^{\lambda t} 
\tilde m_j(\xi, \lambda) \vp_0 \widehat{\DV\bg} \,d\lambda \right](x)\\
&\enskip + \frac{1}{2\pi i} \CF^{-1} \left[\int_{\Gamma^+_k \cup \Gamma^-_k} 
e^{\lambda t} 
\tilde m_3 (\xi, \lambda) \vp_0 \hat{\bg} \,d\lambda \right](x)
+\frac{1}{2\pi i} \frac{a}{N}\CF^{-1} \left[\int_{\Gamma^+_k \cup \Gamma^-_k} 
e^{\lambda t} 
\tilde m_3 (\xi, \lambda) \vp_0{\rm tr}\,\hat \BQ \bI \,d\lambda \right](x).
\end{align*}
We only consider estimates on $\Gamma^+_k$ $( k = 1,2,3)$ below
because the case $\Gamma^-_k$ can be proved similarly.

\noindent
\underline{\bf Estimates on $\Gamma^+_1$.}
First, we consider the case $\Gamma^+_1$.

\begin{lem}\label{gamma1}
Let $1 \leq q \leq 2 \leq p \leq \infty$ and let $\bff \in L_q(\R^N)^N$.
Assume that there exist positive constants $A_1 \in (0, 1)$ and $C$
such that for any $|\xi| \in (0, A_1)$
\[
|\ell(\xi, \lambda)| \leq C |\xi|^{-2}, 
\]
Then, there exist positive constant $A_0 \in (0, 1)$ and $C$ such that 
for any $t>0$
\begin{align}
\left\|
\pd_t^k \nabla^j \CF^{-1} 
\left[ \int_{\Gamma^+_1} e^{\lambda t} \ell(\xi, \lambda) \vp_0 \hat{\bff} 
\,d\lambda \right]
\right\|_{L_p(\BR^N)}
&\leq C t^{-\frac{N}{2}(\frac{1}{q}-\frac{1}{p})-\frac{j}{2} - k}\|\bff\|_{L_q(\BR^N)}
\end{align}
with $j, k \in \N_0$.
\end{lem}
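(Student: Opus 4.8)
The plan is to collapse the contour integral to a single scalar Fourier multiplier and then run the $L_q$--$L_p$ estimate through Hausdorff--Young and H\"older. Parametrize $\Gamma^+_1$ by $\lambda = \lambda(s) = -|\xi|^2 + (|\xi|^2/4)e^{is}$ with $s\in[0,\pi/2]$, so that $d\lambda = i(|\xi|^2/4)e^{is}\,ds$ and, crucially, ${\rm Re}\,\lambda(s) = -|\xi|^2 + (|\xi|^2/4)\cos s \le -\tfrac34|\xi|^2$ while $|\lambda(s)| \le \tfrac54|\xi|^2$. Since $\pd_t^k$ produces the factor $\lambda(s)^k$ in front of $e^{\lambda(s)t}$ and, for a multi-index $\alpha$ with $|\alpha|=j$, the operator $\pd_x^\alpha$ turns $\CF^{-1}$ into multiplication by $(i\xi)^\alpha$, the object to be estimated equals $\CF^{-1}[m(\cdot,t)\hat\bff]$ with
\[
m(\xi,t) = \frac{1}{2\pi i}\int_0^{\pi/2} (i\xi)^\alpha\,\lambda(s)^k\,e^{\lambda(s)t}\,\ell(\xi,\lambda(s))\,\vp_0(\xi)\, i\,\frac{|\xi|^2}{4}\,e^{is}\,ds ,
\]
the interchange of $\CF^{-1}$, $\pd_t^k\pd_x^\alpha$, and the (compact) arc integral being justified by the rapid decay of all integrands.

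First I would fix $A_0\in(0,1)$ small enough that the support condition $|\xi|\le 2A_0/3$ carried by $\vp_0$ forces $|\xi|<A_1$, so that the hypothesis $|\ell(\xi,\lambda)|\le C|\xi|^{-2}$ is available all along $\Gamma^+_1$. Then the elementary bounds $|e^{\lambda(s)t}|\le e^{-\frac34|\xi|^2 t}$, $|\lambda(s)^k|\le C|\xi|^{2k}$, $|(i\xi)^\alpha|\le |\xi|^j$, $|\ell(\xi,\lambda(s))|\le C|\xi|^{-2}$, $|d\lambda|\le\tfrac14|\xi|^2\,ds$ and $\int_0^{\pi/2}ds=\pi/2$ combine --- with the powers $|\xi|^{2k}$, $|\xi|^{j}$, $|\xi|^{-2}$, $|\xi|^{2}$ adding up to $|\xi|^{2k+j}$ --- to the single pointwise bound
\[
|m(\xi,t)| \le C\,|\xi|^{2k+j}\,e^{-\frac34|\xi|^2 t}\ \text{ for } |\xi|\le 2A_0/3,\qquad m(\xi,t)=0 \ \text{ otherwise}.
\]
This, together with the compact support in $\xi$, is the only property of $m$ used below; no derivative estimates on $m$ are needed, which is exactly why $\Gamma^+_1$ was designed to stay a fixed distance inside the left half-plane.

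To close the estimate, observe that for $1\le q\le 2\le p\le\infty$ the exponent $s$ defined by $1/s = 1/q - 1/p$ lies in $[1,\infty]$ and satisfies $1/s + 1/q' = 1/p'$. Applying the Hausdorff--Young inequality (legitimate since $p\ge 2$), then H\"older's inequality, then the Hausdorff--Young inequality once more (legitimate since $q\le 2$),
\[
\|\CF^{-1}[m(\cdot,t)\hat\bff]\|_{L_p(\BR^N)} \le C\|m(\cdot,t)\hat\bff\|_{L_{p'}(\BR^N)} \le C\|m(\cdot,t)\|_{L_s(\BR^N)}\|\hat\bff\|_{L_{q'}(\BR^N)} \le C\|m(\cdot,t)\|_{L_s(\BR^N)}\|\bff\|_{L_q(\BR^N)} .
\]
The rescaling $\eta = \sqrt{t}\,\xi$ turns the pointwise bound into $\|m(\cdot,t)\|_{L_s(\BR^N)} \le C\,t^{-N/(2s)-(2k+j)/2} = C\,t^{-\frac N2(1/q-1/p)-k-j/2}$, and this is precisely the asserted estimate. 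The endpoints $p=\infty$ (so $p'=1$) and $q=1$ (so $q'=\infty$) are covered by Hausdorff--Young, while the degenerate case $q=p=2$ is just Plancherel, so no case distinction is required.

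The only step that really asks for care is the bookkeeping of powers of $|\xi|$: the decay rate $t^{-\frac N2(1/q-1/p)-k-j/2}$ is correct precisely because $m$ vanishes to exact order $2k+j$ at the origin, after the cancellation of the $|\xi|^{-2}$ coming from $\ell$ against the $|\xi|^{2}$ coming from $d\lambda$ (so that, e.g., for $j=k=0$ one only has $|m|\le Ce^{-\frac34|\xi|^2t}$ and still gets the sharp rate). The deformation of the original path $\Gamma$ onto $\Gamma^+_1\cup\Gamma^-_1\cup\cdots$ has already been performed via Cauchy's theorem before the lemma, so it plays no role here.
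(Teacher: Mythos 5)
Your proof is correct. You take a genuinely different (though closely related) route from the paper. The paper's proof peels off a heat semigroup: it splits $e^{-|\xi|^2 t+\frac{|\xi|^2}{4}te^{is}} = e^{-\frac{|\xi|^2}{2}t}\cdot e^{-\frac{|\xi|^2}{2}t+\frac{|\xi|^2}{4}te^{is}}$, applies the $L_p$--$L_2$ heat-kernel estimate to the factored-out $e^{t\Delta/2}$, passes to the Fourier side by Plancherel to absorb $|\xi|^{j+2k}e^{-\frac{|\xi|^2}{4}t}\leq Ct^{-(j+2k)/2}e^{-\frac{|\xi|^2}{8}t}$, and finishes with the $L_2$--$L_q$ heat-kernel estimate on the surviving $e^{t\Delta/8}$. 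You instead collapse the whole thing into a single multiplier $m(\xi,t)$ with the pointwise bound $|m|\leq C|\xi|^{2k+j}e^{-\frac34|\xi|^2 t}$, and then run Hausdorff--Young at the $L_p$ end, H\"older, and Hausdorff--Young at the $L_q$ end, computing $\|m(\cdot,t)\|_{L_s}$ (with $1/s=1/q-1/p$) directly by the scaling $\eta=\sqrt t\,\xi$. Both arguments ``route through $L_2$'' and therefore require $q\leq 2\leq p$ for exactly the same reason. What you buy is a slightly more elementary and self-contained derivation that needs no appeal to heat-kernel $L_p$--$L_q$ mapping bounds; what the paper buys is a shorter display, since it leans on the cited heat-kernel estimates to hide the Hausdorff--Young/H\"older bookkeeping. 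One cosmetic remark: the paper simply takes $A_0=A_1$, whereas you allow yourself $A_0$ small enough that $\mathrm{supp}\,\vp_0\subset\{|\xi|<A_1\}$; since $\vp_0(\xi)=\vp(\xi/A_0)$ vanishes for $|\xi|\geq 2A_0/3$, either choice works.
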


\begin{proof}
Choosing $A_0=A_1$, by
$L_p$-$L_q$ estimates of heat kernel,
and Parseval's theorem, we have 
\begin{equation*}
\begin{aligned}
&\left\|
\pd_t^k \nabla^j \CF^{-1} 
\left[ \int_{\Gamma^+_1} e^{\lambda t} \ell (\xi, \lambda) \vp_0 \hat{\bff} 
\,d\lambda \right]
\right\|_{L_p(\BR^N)}\\
& \enskip
\leq C
\left\|
\CF^{-1} 
\left[ \int^{\pi/2}_0 e^{-|\xi|^2t+\frac{|\xi|^2}{4}t e^{is}} 
\ell(\xi, \lambda) |\xi|^{2+j+2k} e^{is}
\left(-1+\frac{e^{is}}{4} \right)^k\, ds \vp_0 \hat{\bff} \right]
\right\|_{L_p(\BR^N)}\\
&\enskip
\leq C t^{-\frac{N}{2}(\frac{1}{2}-\frac{1}{p})}
\left\|
\CF^{-1} 
\left[ \int^{\pi/2}_0 e^{-\frac{|\xi|^2}{2} t+\frac{|\xi|^2}{4}t e^{is}} 
\ell(\xi, \lambda) |\xi|^{2+j+2k} e^{is} 
\left(-1+\frac{e^{is}}{4} \right)^k\, ds \vp_0 \hat{\bff} \right]
\right\|_{L_2(\BR^N)}\\
&\enskip
\leq C t^{-\frac{N}{2}(\frac{1}{2}-\frac{1}{p})-\frac{j}{2}-k}
\left\|
\CF^{-1} 
\left[e^{-\frac{|\xi|^2}{8}t} \hat{\bff} \right]
\right\|_{L_2(\BR^N)}\\
&\enskip
\leq C t^{-\frac{N}{2}(\frac{1}{q}-\frac{1}{p})-\frac{j}{2}-k}
\|\bff \|_{L_q(\BR^N)}.
\end{aligned}
\end{equation*}

\end{proof}

\noindent
\underline{\bf Estimates on $\Gamma^+_2$.}
Next, we consider the case $\Gamma^+_2$.
\begin{lem}\label{gamma2}
Let $1 \leq q \leq 2 \leq p \leq \infty$ and let $\bff \in L_q(\R^N)^N$.
Assume that there exist positive constants $A_1 \in (0, 1)$ and $C$
such that for any $|\xi| \in (0, A_1)$
\[
|\ell(\xi, \lambda)| \leq C (|\lambda| + |\xi|^2)^{-1}.
\]
Then, there exist positive constant $A_0 \in (0, A_1)$ and $C$ such that 
for any $t>0$
\begin{align}
\left\|
\pd_t^k \CF^{-1} 
\left[ \int_{\Gamma^\pm_2} e^{\lambda t} \ell(\xi, \lambda) \vp_0 \hat{\bff} 
\,d\lambda \right]
\right\|_{L_p(\BR^N)}
&\leq C t^{-\frac{N}{2}(\frac{1}{q}-\frac{1}{p})-k}\|\bff\|_{L_q(\BR^N)} \enskip(q \neq 2), \label{pd t}\\
\left\|
\nabla^j \CF^{-1} 
\left[ \int_{\Gamma^\pm_2} e^{\lambda t} \ell(\xi, \lambda) \vp_0 \hat{\bff} 
\,d\lambda \right]
\right\|_{L_p(\BR^N)}
&\leq C t^{-\frac{N}{2}(\frac{1}{q}-\frac{1}{p})-\frac{j}{2}}\|\bff\|_{L_q(\BR^N)} \label{nabla}
\end{align}
with $j \in \N$ and $k \in \N_0$.
\end{lem}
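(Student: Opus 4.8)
The plan is to reduce the whole estimate to an $L_\rho$-bound on the Fourier-side kernel, using the fact that the low-frequency cut-off $\vp_0$ is supported in a fixed ball together with the Hausdorff--Young inequality, available here because $1\le q\le 2\le p\le\infty$. Set $1/\rho=1/q-1/p$, so that $\rho\in[1,\infty]$, $p'\in[1,2]$, $q'\in[2,\infty]$ and $1/p'=1/\rho+1/q'$, and write
\[
m_k(\xi,t)=\int_{\Gamma^+_2}\pd_t^k\bigl(e^{\lambda t}\ell(\xi,\lambda)\bigr)\,d\lambda
=\int_{\Gamma^+_2}\lambda^k e^{\lambda t}\ell(\xi,\lambda)\,d\lambda
\]
for \eqref{pd t}, and the analogous kernel carrying an extra factor $(i\xi)^\alpha$, $|\alpha|=j$, for \eqref{nabla}. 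Then, componentwise in $\bff$,
\[
\Bigl\|\pd_t^k\CF^{-1}\bigl[\textstyle\int_{\Gamma_2^+}e^{\lambda t}\ell\,\vp_0\hat\bff\,d\lambda\bigr]\Bigr\|_{L_p(\R^N)}
\le C\|m_k\vp_0\hat\bff\|_{L_{p'}(\R^N)}
\le C\|m_k\vp_0\|_{L_\rho(\R^N)}\|\hat\bff\|_{L_{q'}(\R^N)}
\le C\|m_k\vp_0\|_{L_\rho(\R^N)}\|\bff\|_{L_q(\R^N)},
\]
so everything comes down to showing $\|m_k\vp_0\|_{L_\rho}\le Ct^{-\frac N2(\frac1q-\frac1p)-k}$ (resp. $t^{-\frac N2(\frac1q-\frac1p)-j/2}$). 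The path $\Gamma_2^-$ is the complex conjugate of $\Gamma_2^+$ and is handled identically.

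The key geometric input, obtained by direct inspection of the parametrisation $\lambda=-(|\xi|^2(1-s)+\gamma_0 s)\pm i\bigl(\tfrac{|\xi|^2}4(1-s)+\tilde{\gamma}_0 s\bigr)$ of $\Gamma_2^\pm$, is that, after choosing $A_0\le A_1$ (so in particular $|\xi|<1$), there are constants $0<c_1\le c_2$, $c_3>0$, depending only on $\gamma_0$, such that for $|\xi|<A_0$ and $s\in(0,1)$
\[
c_1(|\xi|^2+s)\le|\lambda|\le c_2(|\xi|^2+s),\qquad \mathrm{Re}\,\lambda\le-c_3(|\xi|^2+s),\qquad |d\lambda|\le C\,ds;
\]
indeed $\mathrm{Re}\,\lambda=-(|\xi|^2(1-s)+\gamma_0 s)$ lies between $-c_2(|\xi|^2+s)$ and $-\tfrac12(|\xi|^2+s)$ because $\gamma_0\ge1\ge|\xi|^2$, and $|\lambda'(s)|=\sqrt{(\gamma_0-|\xi|^2)^2+(\tilde{\gamma}_0-|\xi|^2/4)^2}$ is bounded above and away from zero. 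Combining this with the hypothesis $|\ell(\xi,\lambda)|\le C(|\lambda|+|\xi|^2)^{-1}$ gives $|\ell(\xi,\lambda)|\le C(|\xi|^2+s)^{-1}$ on $\Gamma_2^\pm$, hence $|\lambda^k e^{\lambda t}\ell|\le C(|\xi|^2+s)^{k-1}e^{-c_3(|\xi|^2+s)t}$, and after the substitution $u=|\xi|^2+s$,
\[
|m_k(\xi,t)|\le C\int_0^1(|\xi|^2+s)^{k-1}e^{-c_3(|\xi|^2+s)t}\,ds\le C\int_{|\xi|^2}^{\infty}u^{k-1}e^{-c_3ut}\,du .
\]

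For $k\ge1$ one splits $e^{-c_3ut}\le e^{-c_3|\xi|^2t/2}e^{-c_3ut/2}$ on $u\ge|\xi|^2$, getting $|m_k(\xi,t)|\le Ct^{-k}e^{-c_3|\xi|^2t/2}$, whence $\|m_k\vp_0\|_{L_\rho}\le Ct^{-k}\|e^{-c_3|\xi|^2t/2}\|_{L_\rho(\R^N)}=Ct^{-k-N/(2\rho)}$, which is exactly the claimed rate since $N/(2\rho)=\tfrac N2(\tfrac1q-\tfrac1p)$. For $k=0$ the inner integral equals $\Psi(|\xi|^2t)$ with $\Psi(r)=\int_r^{\infty}v^{-1}e^{-c_3v}\,dv$, which satisfies $\Psi(r)\le C(1+|\log r|)$ for $0<r\le1$ and $\Psi(r)\le Ce^{-c_3r}$ for $r\ge1$; scaling $\eta=\sqrt t\,\xi$ and using $\vp_0\le1$ gives $\|m_0\vp_0\|_{L_\rho}\le Ct^{-N/(2\rho)}\|\Psi(|\eta|^2)\|_{L_\rho(\R^N)}$, and $\Psi(|\eta|^2)\in L_\rho(\R^N)$ for every finite $\rho$ (the logarithm at $\eta=0$ is $L_\rho$-integrable in dimension $N$, and $\Psi$ decays exponentially at infinity) — this is why $q\ne2$ is imposed: it forces $1/\rho=1/q-1/p>0$, i.e. $\rho<\infty$, whereas $\Psi(0)=+\infty$ would make the endpoint $q=p=2$ fail. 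For \eqref{nabla} the extra factor is bounded by $C|\xi|^j$, so $|m(\xi,t)|\le C|\xi|^j\Psi(|\xi|^2t)$ and the same scaling yields $\|m\vp_0\|_{L_\rho}\le Ct^{-j/2-N/(2\rho)}\||\eta|^j\Psi(|\eta|^2)\|_{L_\rho(\R^N)}$; since $j\ge1$, the factor $|\eta|^j$ absorbs the logarithmic singularity, so $|\eta|^j\Psi(|\eta|^2)\in L_\rho(\R^N)$ for all $\rho\in[1,\infty]$, which explains why \eqref{nabla} needs no restriction on $q$.

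I expect the only genuinely delicate point to be the $k=0$ case of \eqref{pd t}: there the $d\lambda$-integral no longer produces a Gaussian but the exponential-integral-type function $\Psi$, and one must track carefully both its logarithmic blow-up at the origin (the precise obstruction to $q=2$) and, via the scaling $\eta=\sqrt t\,\xi$, the exact exponent $t^{-N/(2\rho)}$. The contour estimates on $\Gamma_2^\pm$ are elementary but require choosing $A_0$ small and keeping every constant independent of $\xi$ and $t$.
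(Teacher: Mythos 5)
Your argument is correct and establishes the lemma, but by a genuinely different route than the paper. The paper first pulls out a factor $e^{-|\xi|^2t/2}$, applies the $L_p$-$L_2$ smoothing estimate for the heat kernel together with Parseval's theorem, borrows a small auxiliary power $|\xi|^{-\delta}$ ($\delta>0$ small) to control the singularity of $(|\xi|^2+|\xi|^2(1-s)+\gamma_0 s)^{-1}$, performs a change of variables in the $s$-integral to produce $t^{-k-\delta/2}$, and finally absorbs the $|\xi|^{-\delta}$ by Young's convolution inequality $1+\tfrac12 = \tfrac1r+\tfrac1q$, which is where the restriction $q<2$ (equivalently $r>1$) enters. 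You instead pass everything to the Fourier side via Hausdorff--Young (valid precisely because $1\le q\le 2\le p$), reduce the whole lemma to an $L_\rho$-bound on the kernel $m_k\vp_0$ with $1/\rho=1/q-1/p$, and evaluate the contour integral directly: the integrand is controlled by $(|\xi|^2+s)^{k-1}e^{-c_3(|\xi|^2+s)t}$, which for $k=0$ produces the exponential-integral function $\Psi(|\xi|^2 t)$ with a clean logarithmic singularity at the origin. The scaling $\eta=\sqrt t\,\xi$ then yields the exponent $t^{-N/(2\rho)-k}$ (resp. $-j/2$) at once. Your approach makes the mechanism more transparent: the logarithm $\Psi$ is exactly the obstruction to $\rho=\infty$, i.e.\ to the endpoint $q=p=2$, and the extra factor $|\xi|^j$ in \eqref{nabla} visibly absorbs it, which explains without further computation why \eqref{nabla} needs no restriction on $q$. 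The paper's $\delta$-trick is more indirect but avoids introducing $\Psi$ explicitly. One small remark: your scheme actually handles the whole range $q<p$ (hence also $q=2<p$), so the hypothesis ``$q\ne2$'' in the lemma is slightly stronger than what your argument needs; this does not affect correctness.
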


\begin{proof}
First, we prove \eqref{pd t}.
We choose $A_0 \in (0, A_1)$ in such a way that
\begin{equation}\label{a0}
0< \frac{\gamma_0 + \tilde \gamma_0 + |\xi|^2}{\gamma_0 - |\xi|^2} <C
\end{equation}
for any $|\xi|\in (0, A_0)$ with some positive constant $C$.
Since $|\xi|$ and $\tilde\gamma_0$ satisfy 
$|\xi|^2<1< (2\sqrt{65}+1)/8 < \tilde\gamma_0$, 
we have
\begin{equation}\label{spectrum2}
|\lambda + |\xi|^2| \geq C \{|\xi|^2 + |\xi|^2(1-s) + \gamma_0 s\}.
\end{equation}
In fact,
\begin{align*}
|\lambda + |\xi|^2|^2&= \left|s(|\xi|^2-\gamma_0)
+i\left\{\frac{|\xi|^2}{4}(1-s) + \tilde\gamma_0 s\right\}\right|^2\\
&=s^2(\gamma_0-|\xi|^2)^2 + \left\{\frac{|\xi|^2}{4}(1-s) + \tilde\gamma_0 s\right\}^2 \\
&\geq \left\{\frac{|\xi|^2}{4}(1-s) + \tilde\gamma_0 s\right\}^2
\end{align*}
yields that
\begin{align*}
|\lambda + |\xi|^2|
&\geq \frac{|\xi|^2}{4}(1-s) + \tilde\gamma_0 s\\
&= \frac12\left\{ \frac{|\xi|^2}{4}(1-s) + \tilde\gamma_0 s\right\} 
+ \frac12\left\{ \frac{|\xi|^2}{4}(1-s) 
+ \tilde\gamma_0 s\right\}\\
&\geq \frac18|\xi|^2 + \frac s8(4\tilde\gamma_0 - |\xi|^2) 
+ \frac18(|\xi|^2(1-s) + \tilde\gamma_0 s)
\\
&\geq  \frac18(|\xi|^2 + |\xi|^2(1-s) + \tilde\gamma_0 s).
\end{align*}
By \eqref{spectrum2}, $L_p$-$L_q$ estimates of heat kernel, and
Parseval's theorem, 
we have 
\begin{equation*}
\begin{aligned}
&\left\|
\pd_t^k \CF^{-1} 
\left[ \int_{\Gamma^+_2} e^{\lambda t} \ell(\xi, \lambda) \vp_0 \hat{\bff} 
\,d\lambda \right]
\right\|_{L_p(\BR^N)}\\
&\enskip
\leq C t^{-\frac{N}{2}(\frac{1}{2}-\frac{1}{p})}
\left\|
 e^{-\frac{|\xi|^2}{2}t} \int^1_0 e^{-\frac{1}{4}(|\xi|^2(1-s) + \gamma_0 s) t} 
\frac{(\gamma_0 + \tilde \gamma_0 + |\xi|^2)(|\xi|^2(1-s) + \gamma_0 s)^k}{|\xi|^2 + |\xi|^2(1-s) + \gamma_0 s} \, ds \vp_0  \hat{\bff} 
\right\|_{L_2(\BR^N)}\\
&\enskip
\leq C t^{-\frac{N}{2}(\frac{1}{2}-\frac{1}{p})}
\left\|
e^{-\frac{|\xi|^2}{2}t} \int^1_0 e^{-\frac{1}{4}(|\xi|^2(1-s) + \gamma_0 s)t} 
(|\xi|^2(1-s) + \gamma_0 s )^{k-1+\frac{\delta}{2}} \, ds
(\gamma_0 + \tilde \gamma_0 + |\xi|^2) |\xi|^{-\delta} \vp_0 \hat{\bff} 
\right\|_{L_2(\BR^N)}
\end{aligned}
\end{equation*}
for a sufficiently small $\delta >0$.
Here, by change of variables, we have
\[
\int^1_0 e^{-\frac{1}{4}(|\xi|^2(1-s) + \gamma_0 s)t} 
(|\xi|^2(1-s) + \gamma_0 s )^{k-1+\frac{\delta}{2}} \, ds
\leq
C t^{-k-\frac{\delta}{2}}
\frac{1}{\gamma_0 - |\xi|^2},
\]
which combined with \eqref{a0} and Young's inequality with $1+1/2 = 1/r + 1/q$ for $1 \leq q <2$, 
we have
\begin{equation*}
\begin{aligned}
&\left\|
\pd_t^k \CF^{-1} 
\left[ \int_{\Gamma^+_2} e^{\lambda t} \ell(\xi, \lambda) \vp_0 \hat{\bff} 
\,d\lambda \right]
\right\|_{L_p(\BR^N)}\\
&\enskip
\leq C t^{-\frac{N}{2}(\frac{1}{2}-\frac{1}{p})-k}
t^{-\frac{\delta}{2}}
\left\|
e^{-\frac{|\xi|^2}{2}t} 
\frac{\gamma_0 + \tilde \gamma_0 + |\xi|^2}{\gamma_0 - |\xi|^2}
|\xi|^{-\delta} \vp_0 \hat{\bff} 
\right\|_{L_2(\BR^N)}\\
&\enskip
\leq Ct^{-\frac{N}{2}(\frac{1}{2}-\frac{1}{p})-k}
t^{-\frac{\delta}{2}}
\left\|
e^{-\frac{|\xi|^2}{2}t} 
|\xi|^{-\delta}\hat{\bff} 
\right\|_{L_2(\BR^N)}\\
&\enskip
\leq C t^{-\frac{N}{2}(\frac{1}{2}-\frac{1}{p})-k}
t^{-\frac{\delta}{2}}
\left\|
\CF^{-1} 
\left[ 
e^{-\frac{|\xi|^2}{2}t} 
|\xi|^{-\delta}
\right]
\right\|_{L_r(\R^N)}
\|\bff \|_{L_q(\BR^N)}.
\end{aligned}
\end{equation*}
Here,
\[
\left\|
\CF^{-1} 
\left[ 
e^{-\frac{|\xi|^2}{2}t} 
|\xi|^{-\delta}
\right]
\right\|_{L_r(\R^N)}
\leq
C t^{\frac{\delta}{2} - \frac{N}{2}\left(\frac{1}{q} - \frac{1}{2}\right)}
\]
provided by $r>1$, so that we have
\begin{equation*}
\begin{aligned}
&\left\|
\pd_t^k \CF^{-1} 
\left[ \int_{\Gamma^+_2} e^{\lambda t} \ell(\xi, \lambda) \vp_0 \hat{\bff} 
\,d\lambda \right]
\right\|_{L_p(\BR^N)}\\
&\enskip
\leq C t^{-\frac{N}{2}(\frac{1}{2}-\frac{1}{p})-k}
t^{\frac{\delta}{2} - \frac{N}{2}\left(\frac{1}{q} - \frac{1}{2}\right)}
t^{-\frac{\delta}{2}}
\|
\bff \|_{L_q(\BR^N)}\\
&\enskip
= Ct^{-\frac{N}{2}(\frac{1}{q}-\frac{1}{p})-k}
\|\bff \|_{L_q(\BR^N)}.
\end{aligned}
\end{equation*}

By a similar calculation, we can prove \eqref{nabla}, but without using Young's inequality.
In fact, for $j \in \N$,
\begin{align*}
&\left\|
\nabla^j \CF^{-1} 
\left[ \int_{\Gamma^+_2} e^{\lambda t} \ell(\xi, \lambda) \vp_0 \hat{\bff} 
\,d\lambda \right]
\right\|_{L_p(\BR^N)}\\
&\enskip
\leq C t^{-\frac{N}{2}(\frac{1}{2}-\frac{1}{p})}
\left\|
e^{-\frac{|\xi|^2}{2}t} \int^1_0 e^{-\frac{1}{4}(|\xi|^2(1-s) + \gamma_0 s)t} 
(|\xi|^2(1-s) + \gamma_0 s )^{-1+\frac{\delta}{2}} \, ds
(\gamma_0 + \tilde \gamma_0 + |\xi|^2) |\xi|^{j-\delta} \vp_0 \hat{\bff} 
\right\|_{L_2(\BR^N)}\\
&\enskip
\leq C t^{-\frac{N}{2}(\frac{1}{2}-\frac{1}{p})}
t^{-\frac{\delta}{2}}
\left\|
e^{-\frac{|\xi|^2}{2}t} 
\frac{\gamma_0 + \tilde \gamma_0 + |\xi|^2}{\gamma_0 - |\xi|^2}
|\xi|^{j-\delta} \vp_0 \hat{\bff} 
\right\|_{L_2(\BR^N)}\\
&\enskip
\leq C t^{-\frac{N}{2}(\frac{1}{q}-\frac{1}{p})- \frac{j}{2}}
\|
\bff \|_{L_q(\BR^N)}.
\end{align*}

\end{proof}

\noindent
\underline{\bf Estimates on $\Gamma^+_3$.}
Finally, we consider the case $\Gamma^+_3$.
\begin{lem}\label{gamma3}
Let $1 \leq q \leq 2 \leq p \leq \infty$ and let $\bff \in L_q(\R^N)^N$.
Assume that there exist positive constants $A_1 \in (0, 1)$ and $C$
such that for any $|\xi| \in (0, A_1)$
\[
|\ell (\xi, \lambda)| \leq C |\lambda| ^{-1}.
\]
Then, there exist positive constant $A_0 \in (0, A_1)$ and $C$ such that 
for any $t>0$
\begin{align}\label{es:gamma3}
\left\|
\pd_t^k \nabla^j \CF^{-1} 
\left[ \int_{\Gamma^\pm_3} e^{\lambda t} \ell (\xi, \lambda) \vp_0 \hat{\bff} 
\,d\lambda \right]
\right\|_{L_p(\BR^N)}
&\leq C t^{-\frac{N}{2}(\frac{1}{q}-\frac{1}{p})-\frac{j}{2}-k}\|\bff\|_{L_q(\BR^N)}
\end{align}
with $j, k \in \N_0$.
\end{lem}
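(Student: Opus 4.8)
The plan is to follow the template of the proofs of Lemmas~\ref{gamma1} and~\ref{gamma2}, using the one structural feature that distinguishes the ray $\Gamma^\pm_3$ from the finite arcs $\Gamma^\pm_1,\Gamma^\pm_2$: every $\lambda\in\Gamma^\pm_3$ has ${\rm Re}\,\lambda\le-\gamma_0$, so $e^{\lambda t}$ supplies a uniform exponential decay in $t$ on top of whatever the cutoff $\vp_0$ provides. First I would parametrize $\Gamma^+_3$ by $s\in(0,\infty)$ as in \eqref{path}, recording ${\rm Re}\,\lambda(s)=-\gamma_0-s\cos\sigma_0$, $|\lambda(s)|\ge c(\gamma_0+s)$, and $d\lambda=e^{i(\pi-\sigma_0)}\,ds$ (the path $\Gamma^-_3$ is identical). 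Then I would shrink $A_0\in(0,A_1)$ so that $|\xi|^2\le\gamma_0/4$ on ${\rm supp}\,\vp_0$, split $e^{\lambda t}=e^{-|\xi|^2 t}\,e^{(\lambda+|\xi|^2)t}$, and note $|e^{(\lambda+|\xi|^2)t}|\le e^{-3\gamma_0 t/4}\,e^{-s(\cos\sigma_0)t}$ there. Differentiating under the integral (justified by this decay) replaces $e^{\lambda t}$ by $\lambda^k e^{\lambda t}$ for $\pd_t^k$ and inserts a factor $(i\xi)^\alpha$, $|\alpha|=j$, for $\nabla^j$.

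Next I would run the heat-kernel estimates exactly as in Lemma~\ref{gamma2}. Using one half of the surviving factor $e^{-|\xi|^2 t}$ and the $L_p$--$L_2$ estimate for the heat semigroup gains the factor $t^{-\frac N2(\frac12-\frac1p)}$; passing to $L_2$ by Parseval, a quarter of $e^{-|\xi|^2 t}$ bounds $|(i\xi)^\alpha|\,e^{-|\xi|^2 t/4}\le C t^{-j/2}$, and the last quarter together with the $L_2$--$L_q$ heat estimate applied to $e^{-|\xi|^2 t/4}\widehat{\bff}$ gains $t^{-\frac N2(\frac1q-\frac12)}$ (when $1\le q<2$ this intermediate step goes through Young's inequality as in Lemma~\ref{gamma2}; for $q=2$ it is immediate). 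What remains is the contour integral itself, which — using $|\ell|\le C|\lambda|^{-1}\le C(\gamma_0+s)^{-1}$ and $|\lambda|^k\le C(\gamma_0+s)^k$ — is bounded uniformly in $\xi$ by $C e^{-3\gamma_0 t/4}\int_0^\infty(\gamma_0+s)^{k-1}e^{-s(\cos\sigma_0)t}\,ds$. Collecting the four factors produces precisely $C t^{-\frac N2(\frac1q-\frac1p)-\frac j2-k}\|\bff\|_{L_q(\R^N)}$, provided this last $s$-integral is $O(t^{-k})$.

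Verifying that last point is the delicate step and the one I expect to be the main obstacle. For $k\ge1$ the substitution $u=s(\cos\sigma_0)t$ gives $\int_0^\infty(\gamma_0+s)^{k-1}e^{-s(\cos\sigma_0)t}\,ds\le C t^{-k}$, which closes the estimate for every $t>0$. For $k=0$ the integral is $O(1)$ for $t\ge1$ — so the bound holds there, the exponential $e^{-3\gamma_0 t/4}$ absorbing any negative power of $t$, and this already covers the regime $t\ge1$ relevant to Theorem~\ref{decay for u} — but behaves like $\log(1/t)$ as $t\to0^+$, so the crude $L^\infty_\xi$ bound loses the endpoint. For $0<t<1$ I would recover the clean power by not discarding the cancellation in the contour integral: in the cases where the lemma is invoked with $k=0$ the multiplier either decays faster, $|\ell|\le C|\lambda|^{-2}$ (because of an extra factor $|\xi|$ or $P_2$), making $\int_0^\infty(\gamma_0+s)^{-2}\,ds$ convergent and $O(1)$ uniformly in $t$, or it is of heat-resolvent type, in which case one does not split the original contour at all but evaluates $\frac1{2\pi i}\int_\Gamma e^{\lambda t}\,\ell(\xi,\lambda)\,d\lambda$ by residues (e.g. $\ell=\tilde m_3(\xi,\lambda)$ gives $e^{-(|\xi|^2+a)t}$), so that the associated operator is a low-frequency piece of a heat semigroup, whose $L_p$--$L_q$ decay is classical. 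A final bookkeeping of the factors $t^{-\frac N2(\frac12-\frac1p)}$, $t^{-j/2}$, $t^{-\frac N2(\frac1q-\frac12)}$ and $t^{-k}$ then yields \eqref{es:gamma3}.
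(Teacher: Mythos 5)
Your proof follows exactly the paper's route: shrink $A_0$ so that $|\xi|^2\le\gamma_0/4$ on $\operatorname{supp}\vp_0$, use the uniform negative real part of $\lambda$ on $\Gamma_3^\pm$ to split off $e^{-|\xi|^2 t}$, apply the $L_p$--$L_2$ heat-kernel estimate together with Parseval and then the $L_2$--$L_q$ step (via Young for $q<2$) to produce $t^{-\frac N2(\frac1q-\frac1p)-\frac j2}$, and bound the remaining $s$-integral $\int_0^\infty e^{-(\gamma_0/2+s\cos\sigma_0)t}|\lambda|^{k-1}\,ds$. You also correctly flag the one genuine subtlety: for $k=0$ this $s$-integral is $O(|\log t|+1)$ as $t\to0^+$, so the crude estimate overshoots the claimed power there. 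The paper's own proof runs into precisely the same $\log$ factor (see \eqref{log}) and simply moves on to ``Thus, we have \eqref{es:gamma3}''; the implicit justification is that Lemma~\ref{gamma3} is invoked only in the regime $t\ge 1$ of Theorem~\ref{decay for u}\,\eqref{t>0}(a), where $e^{-\gamma_0 t/2}$ absorbs both the log and the polynomial prefactor. Your proposed repairs for $0<t<1$ (extra decay of the concrete symbols on $\Gamma_3$, or a residue computation for the heat-resolvent pieces such as $\tilde m_3$) are plausible but go beyond what the paper does and are not needed for the application.
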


\begin{proof}
We choose $A_0 \in (0, A_1)$ in such a way that
$2|\xi|^2 \leq \gamma_0/2$
for any $|\xi| \in (0, A_0)$. 
By $L_p$-$L_q$ estimates of heat kernel,
and Parseval's theorem, 
we have 
\begin{equation*}
\begin{aligned}
&\left\|
\pd_t^k \nabla^j \CF^{-1} 
\left[ \int_{\Gamma^+_3} e^{\lambda t} \ell (\xi, \lambda) \vp_0 \hat{\bff} 
\,d\lambda \right]
\right\|_{L_p(\BR^N)}\\
&\enskip
\leq C t^{-\frac{N}{2}(\frac{1}{2}-\frac{1}{p})}
\left\|
 e^{-|\xi|^2t} \int^\infty_0 
e^{-\left(\frac{\gamma_0}{2} + s\cos \sigma_0 \right)t} 
\frac{1}{|\lambda|^{1-k}} \, ds  |\xi|^j \vp_0 \hat{\bff} 
\right\|_{L_2(\BR^N)}\\
&\enskip
\leq C t^{-\frac{N}{2}(\frac{1}{q}-\frac{1}{p})-\frac{j}{2}}
\int^\infty_0 \frac{e^{-\left(\frac{\gamma_0}{2} + s\cos \sigma_0 \right)t}}{|\lambda|^{1-k}}\,ds
\|\bff\|_{L_q(\BR^N)}\\
&\enskip
\leq C \left\{
\begin{aligned}
&(|\log t| + 1) e^{-\frac{\gamma_0}{2}t} \|\bff\|_{L_q(\BR^N)}
&(k=0), \\
&t^{-k} e^{-\frac{\gamma_0}{2}t} \|\bff\|_{L_q(\BR^N)}
&(k>0).
\end{aligned}
\right.
\end{aligned}
\end{equation*}
Here, we used the following estimate
\begin{equation}\label{log}
\int^\infty_0 \frac{e^{({\rm Re} \lambda) t}}{|\lambda|^{1-k}}\,ds
\leq C
 \left\{
\begin{aligned}
&(|\log t| + 1) e^{-\gamma_0 t}&(k=0), \\
&t^{-k} e^{-\gamma_0 t}&(k>0).
\end{aligned}
\right.
\end{equation}
Thus, we have \eqref{es:gamma3}.
\end{proof}

\begin{proof}[proof of Theorem \ref{decay for u} \eqref{t>0} (a)]
To obtain the estimates of $S^1_0(t)$, 
we first consider estimate of $P_2(\xi, \lambda)$
for $\lambda \in \Gamma^+_1$.
By \eqref{expansion}, there exists positive constant $A_1 \in (0, 1)$ and $C$
such that for any $\lambda \in \Gamma^+_1$ and $|\xi| < A_1$
\[
P_2(\xi, \lambda)
= \frac{|\xi|^2}{4} \left(\frac{|\xi|^2}{4}e^{is} + a\right).
\]
By
$|(|\xi|^2/4)e^{is} + a| \geq C(|\xi|^2 + a)$ for $s \in [0, \pi/2]$,
we have
\begin{equation}\label{p1p2}
|P_2(\xi, \lambda)^{-1}|
\leq C(|\xi|^4 + a |\xi|^2)^{-1}.
\end{equation}
Using $|\lambda+|\xi|^2+a|\leq C(|\xi|^2 + a)$ and \eqref{p1p2},
we have
\begin{equation}\label{esgamma1}
|\ell_j(\xi, \lambda)| \leq C|\xi|^{-2},
\enskip
|m_j(\xi, \lambda)| \leq C|\xi|^{-2}
\end{equation}
for $\lambda \in \Gamma^+_1$ and $j = 1, 2$.

We next consider the estimate of $S_0^2(t)$ and $S_0^3(t)$.
Noting that $\Gamma^+_2, \Gamma^+_3 \subset \Sigma_{\sigma_0, 0} $,
by \eqref{spectrum1} and \eqref{spectrum3},
we know that
\begin{equation} \label{esgamma23m}
\begin{aligned}
&|m_j(\xi, \lambda)| 
\leq C (|\lambda| + |\xi|^2)^{-1} &\text{ for } &\lambda \in \Gamma^+_2,\\
&|m_j(\xi, \lambda)| \leq C |\lambda|^{-1} &\text{ for } &\lambda \in \Gamma^+_3 
\end{aligned}
\end{equation}
with $j=1, 2$. 
Moreover, by the estimate
\[
|P_2(\xi, \lambda)^{-1}| \leq C(|\lambda| + |\xi|^2)^{-1} (|\lambda| + |\xi|^2 + a)^{-1},
\]
which follows from \eqref{spectrum1} and the expansion formula \eqref{p2 low},
we have
\begin{equation} \label{esgamma23l}
\begin{aligned}
&|\ell_j(\xi, \lambda)| 
\leq C (|\lambda| + |\xi|^2)^{-1} &\text{ for } &\lambda \in \Gamma^+_2,\\
&|\ell_j(\xi, \lambda)| \leq C |\lambda|^{-1} &\text{ for } &\lambda \in \Gamma^+_3 
\end{aligned}
\end{equation}
with $j=1, 2$. 

By \eqref{esgamma1}, \eqref{esgamma23m}, and \eqref{esgamma23l},
we can apply Lemma \ref{gamma1}, Lemma \ref{gamma2}, and Lemma \ref{gamma3},
which gives \eqref{esemi1} and \eqref{esemi1'}.

Similarly, we have the estimate of $\nabla^{k+1} T_0(t) (\bff, \bg)$
for $k \in \N$.
In fact, $i\xi_\ell \tilde \ell_j (\xi, \lambda)$, 
$i\xi_\ell \tilde m_j (\xi, \lambda)$, and 
$\tilde m_3 (\xi, \lambda)$
satisfy the assumption of Lemma \ref{gamma1}, Lemma \ref{gamma2}, and Lemma \ref{gamma3}
for $\ell = 1, \dots, N$ and $j = 1, 2$.
Using \eqref{esd}, we have \eqref{est}.

We finally consider the estimate of $\pd_t^k \nabla^j T_0(t) (\bff, \bg)$
for $j, k \in \N_0$.
$T_0(t) (\bff, \bg)$ can be rewritten by the following form:
\begin{align*}
T_0(t)(\bff, \bg)
&=\frac{\beta}{2\pi i} 
\CF^{-1} \left[\int_\Gamma e^{\lambda t} 
\tilde m_3(\xi, \lambda) \vp_0 \widehat{\bD(\bu)} \,d\lambda \right](x)
+ \frac{1}{2\pi i} \sum^2_{j = 1}\CF^{-1} \left[\int_\Gamma e^{\lambda t} 
\tilde m_3(\xi, \lambda) \vp_0 \hat\bg \,d\lambda \right](x)\\
&\enskip
+\frac{1}{2\pi i} \frac{a}{N}\CF^{-1} \left[\int_\Gamma e^{\lambda t} 
\tilde m_3(\xi, \lambda) \vp_0 \tr \hat{\bQ} \bI \,d\lambda \right](x)\\
&=: I_0(t) + II_0(t) + III_0(t).
\end{align*}
Recalling that $\tilde m_3(\xi, \lambda)$ satisfies Lemma \ref{gamma1},
Lemma \ref{gamma2}, and Lemma \ref{gamma3} and using \eqref{esd},
we have
\begin{equation}\label{23}
\begin{aligned}
\|\pd_t^k \nabla^j II_0\|_{L_p(\R^N)} + \|\pd_t^k \nabla^j III_0\|_{L_p(\R^N)} 
&\leq Ct^{-\frac{N}{2} (\frac{1}{q} - \frac{1}{p})-\frac{j}{2}-k}
(\|\bg\|_{L_q(\R^N)} + \|d\|_{L_q(\R^N)})\\
&\leq Ct^{-\frac{N}{2} (\frac{1}{q} - \frac{1}{p})-\frac{j}{2}-k}
\|\bg\|_{L_q(\R^N)}
\end{aligned}
\end{equation}
for $q \neq 2$, $j, k\in \N_0$.
To get $L_p$-$L_2$ decay estimates for $I_0$, 
we reconsider the estimates for $\lambda \in \Gamma_2$ as follows.
By the same calculation as in the proof of \eqref{nabla} in Lemma \ref{gamma2},
we have
\begin{equation}\label{du}
\begin{aligned}
&\left\|\pd_t^k \nabla^j \CF^{-1} \left[\int_{\Gamma^+_2} e^{\lambda t} 
\tilde m_3(\xi, \lambda) \vp_0 \widehat{\bD(\bu)} \,d\lambda \right]\right\|_{L_p(\R^N)}\\
&\enskip 
\leq C t^{-\frac{N}{2}(\frac{1}{2}-\frac{1}{p})-\frac{j}{2}}\\
&\enskip \times
\left\|
e^{-\frac{|\xi|^2}{8} t} \int^1_0 e^{-\frac{1}{4}(|\xi|^2(1-s) + \gamma_0 s)t} 
\frac{(|\xi|^2(1-s) + \gamma_0 s)^k}{|\xi|^2 + |\xi|^2(1-s) + \gamma_0 s } \, ds
(\gamma_0 + \tilde \gamma_0 + |\xi|^2)|\xi| \vp_0 \hat{\bu} 
\right\|_{L_2(\BR^N)}\\
&\enskip 
\leq C t^{-\frac{N}{2}(\frac{1}{q}-\frac{1}{p})-\frac{1}{2}-\frac{j}{2}-k}
\|S_0(t)(\bff, \bg)\|_{L_q(\BR^N)},
\end{aligned}
\end{equation}
where $1 \leq q \leq 2 \leq p \leq \infty$.
Combining \eqref{du} with $(p, q) = (p, 2)$ and \eqref{esemi1} with $(p, q) = (2, q)$,
we have
\begin{equation}\label{1}
\|\pd_t^k \nabla^j I_0\|_{L_p(\R^N)}
\leq C t^{-\frac{N}{2}(\frac{1}{q}-\frac{1}{p})-\frac{1}{2}-\frac{j}{2}-k}
\|(\bff, \bg)\|_{W^{0, 1}_q(\BR^N)},
\end{equation}
where $1 \leq q \leq 2 \leq p \leq \infty$ and $q \neq 2$.
By \eqref{23} and \eqref{1}, we have \eqref{t0} for $t \geq 1$.
\end{proof}

\subsubsection{Analysis of high frequency parts}
We shall prove Theorem \ref{decay for u} \eqref{t>0} (b).
Recall that roots of $P_2(\xi, \lambda)$ have 
the following expansion formula:
\begin{equation}\label{expansion2}
\lambda_\pm = -(1 \pm i|\beta|)|\xi|^2 + O(1).
\end{equation}
Set $\gamma_\infty$ as follows:
\[
0 < \gamma_\infty \leq 2^{-1} (A_0/6)^2,
\]
where $A_0$ is the same as in subsection \ref{subsection low}.
Moreover, we set integral paths:
\begin{align*}
\Gamma^{\pm}_4 &=
\{\lambda \in \BC \mid \lambda = -\gamma_\infty \pm 
i \tilde \gamma_\infty  + s e^{\pm i(\pi - \sigma_0)}, 
\enskip s: 0 \to \infty \},\\
\Gamma^{\pm}_5 &=
\{\lambda \in \BC \mid \lambda = - \gamma_\infty \pm is, 
\enskip s: 0 \to \tilde\gamma_\infty \},
\end{align*}
where 
$\tilde \gamma_\infty = (\tilde \lambda_0(\sigma_0) + \gamma_\infty) \tan \sigma_0$,
$\tilde\lambda_0(\sigma_0) = 2\lambda_0(\sigma_0)/ \sin \sigma_0$,
and $\sigma_0$ is defined in \eqref{sigma}.
By Cauchy's integral theorem,
$S_\infty(t)(\bff, \bg)$ and $T_\infty(t)(\bff, \bg)$can be decomposed by 
\[
S_\infty(t)(\bff, \bg) = \sum^5_{k=4} S_\infty^k(t)(\bff, \bg),
\enskip
T_\infty(t)(\bff, \bg) = \sum^5_{k=4} T_\infty^k(t)(\bff, \bg),
\]
where
\begin{align*}
S_\infty^k(t)(\bff, \bg)
&=\frac{1}{2\pi i} 
\int_{\Gamma^+_k \cup \Gamma^-_k}
e^{\lambda t} \bu_\infty (\lambda, x) \,d\lambda,\\
\bu_\infty (\lambda, x) 
&=\CF^{-1} 
\left[\sum^2_{j = 1} \ell_j(\xi, \lambda) \vp_\infty \hat \bff \right](x)
+\CF^{-1} 
\left[\sum^2_{j = 1} m_j(\xi, \lambda) \vp_\infty \hat \bg \right](x),\\
T_\infty^k(t)(\bff, \bg)
&=\frac{1}{2\pi i} 
\int_{\Gamma^+_k \cup \Gamma^-_k}
e^{\lambda t} \bQ_\infty (\lambda, x) \,d\lambda,\\
\bQ_\infty (\lambda, x) 
&=\CF^{-1} 
\left[\sum^2_{j = 1} \tilde \ell_j(\xi, \lambda) \vp_\infty \hat \bff \right](x)
+\CF^{-1} 
\left[\sum^4_{j = 1} \tilde m_j(\xi, \lambda) \vp_\infty \hat \bg \right](x),\\
&\tilde m_4(\xi, \lambda) \hat \bg = \frac{1}{P_1(\xi, \lambda)} \tr \hat \bg \bI.
\end{align*}
We only consider estimates on $\Gamma^+_k$ $( k = 4, 5)$ below
because the case $\Gamma^-_k$ can be proved similarly.

\noindent
\underline{\bf Estimates on $\Gamma^+_4$.}
First, we consider the case $\Gamma^+_4$.
Let $1< q< \infty$.
Noting that $\Gamma^+_4 \subset \Sigma_{\sigma_0, \lambda_0(\sigma_0)}$,
by \eqref{rbdd u} \eqref{rbdd q} in Theorem \ref{thm:Rbdd}, there exists a positive constant $C$
such that for any $\lambda \in \Gamma^+_4$ and $(\bff, \bg) \in W^{0, 1}_q(\R^N)^N$, 
\begin{equation}\label{resolvent}
\begin{aligned}
&\|(\lambda \bu_\infty, \lambda \bQ_\infty)\|_{W^{0, 1}_q(\R^N)}
+\|(\lambda^{1/2}\nabla \bu_\infty, \lambda^{1/2}\nabla \bQ_\infty)\|_{W^{0, 1}_q(\R^N)}
+\|(\nabla^2 \bu_\infty, \nabla^2 \bQ_\infty)\|_{W^{0, 1}_q(\R^N)}\\
&\enskip \leq C\|(\bff, \bg)\|_{W^{0, 1}_q(\R^N)}.
\end{aligned}
\end{equation}
By \eqref{resolvent} and \eqref{log},
there exists a positive constant $C$ such that
for any $t>0$
\begin{equation}\label{s4}
\begin{aligned}
&\|(S^4_\infty(t) (\bff, \bg), T^4_\infty(t) (\bff, \bg))\|_{W^{0, 1}_q(\R^N)}
\leq C(|\log t|+1) e^{-\gamma_\infty t}\|(\bff, \bg)\|_{W^{0, 1}_q(\R^N)},\\
&\|(\nabla S^4_\infty(t) (\bff, \bg), \nabla T^4_\infty(t) (\bff, \bg))\|_{W^{0, 1}_q(\R^N)}
\leq Ct^{-\frac{1}{2}} e^{-\gamma_\infty t}\|(\bff, \bg)\|_{W^{0, 1}_q(\R^N)},\\
&\|((\pd_t, \nabla^2) S^4_\infty(t) (\bff, \bg), (\pd_t, \nabla^2) T^4_\infty(t) (\bff, \bg))\|_{W^{0, 1}_q(\R^N)}
\leq Ct^{-1} e^{-\gamma_\infty t}\|(\bff, \bg)\|_{W^{0, 1}_q(\R^N)}.
\end{aligned}
\end{equation}

\noindent
\underline{\bf Estimates on $\Gamma^+_5$.}
Next, we consider the case $\Gamma^+_5$.
Let $1< q< \infty$.
We shall estimate $\bv_\infty$ by Fourier multiplier theorem.
We consider estimates of $\lambda + |\xi|^2$, $P_1(\xi, \lambda)$, and $P_2(\xi, \lambda)$
for $\lambda \in \Gamma^+_5$ and $|\xi| \geq A_0/6$.

\begin{lem}
Let $\alpha \in \N_0^N$. There exists a positive constant $C$
such that for any $\lambda \in \Gamma^+_5$ and $|\xi| \geq A_0/6$,
\begin{equation}\label{es p2}\begin{aligned}
&|\pd_\xi^\alpha (\lambda + |\xi|^2)^{-1}|\leq C|\xi|^{-2-|\alpha|}, 
\\
&|\pd_\xi^\alpha (P_1(\xi, \lambda)^{-1})|
\leq C|\xi|^{-4-|\alpha|},
\\
&|\pd_\xi^\alpha (P_2(\xi, \lambda)^{-1})| 
\leq C|\xi|^{-4-|\alpha|}.
\end{aligned}
\end{equation}
\end{lem}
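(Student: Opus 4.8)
The plan is to reduce the lemma to two ingredients, in parallel with the proof of Lemma~\ref{lem:spectrum2}: sharp two‑sided polynomial bounds for the symbols $\lambda+|\xi|^2$, $\lambda+|\xi|^2+a$, $P_1(\xi,\lambda)$, $P_2(\xi,\lambda)$ on the set in question, and Bell's formula for the $\xi$‑derivatives of $1/w$. First I would record the elementary facts about the path: for $\lambda\in\Gamma^+_5$ one has ${\rm Re}\,\lambda=-\gamma_\infty$ and $|\lambda|\le M:=(\gamma_\infty^2+\tilde\gamma_\infty^2)^{1/2}$, while the choice $0<\gamma_\infty\le\tfrac12(A_0/6)^2$ gives $\gamma_\infty\le\tfrac12|\xi|^2$ whenever $|\xi|\ge A_0/6$. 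Consequently ${\rm Re}\,(\lambda+|\xi|^2)=|\xi|^2-\gamma_\infty\ge\tfrac12|\xi|^2$ and ${\rm Re}\,(\lambda+|\xi|^2+a)\ge\tfrac12|\xi|^2+a$, so that $|\lambda+|\xi|^2|\ge\tfrac12|\xi|^2$, $|\lambda+|\xi|^2+a|\ge\tfrac12|\xi|^2$, and hence $|P_1(\xi,\lambda)|\ge\tfrac14|\xi|^4$ on this set.

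The point that needs a little care is the lower bound for $P_2$, because $\Gamma^+_5$ touches the negative real axis at $\lambda=-\gamma_\infty$ and is therefore not contained in any sector $\Sigma_{\sigma,0}$; thus Lemma~\ref{spectrum}~\thetag3 does not apply directly. Here I would use the factorization $P_2(\xi,\lambda)=(\lambda-\lambda_+(\xi))(\lambda-\lambda_-(\xi))$ with the explicit roots $\lambda_\pm(\xi)=\tfrac12\bigl(-(2|\xi|^2+a)\pm\sqrt{a^2-4\beta^2|\xi|^4}\bigr)$ obtained by solving $P_2(\xi,\lambda)=0$ in $\lambda$. Since $a>0$, a short case distinction on the sign of $a^2-4\beta^2|\xi|^4$ shows ${\rm Re}\,\lambda_\pm(\xi)\le-|\xi|^2$ for every $\xi$. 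Combined with ${\rm Re}\,\lambda=-\gamma_\infty$ and $\gamma_\infty\le\tfrac12|\xi|^2$, this yields ${\rm Re}\,(\lambda-\lambda_\pm(\xi))\ge|\xi|^2-\gamma_\infty\ge\tfrac12|\xi|^2$, hence $|P_2(\xi,\lambda)|\ge\tfrac14|\xi|^4$ for all $\lambda\in\Gamma^+_5$ and $|\xi|\ge A_0/6$. In other words, the constraint $\gamma_\infty\le\tfrac12(A_0/6)^2$ is precisely what places $\Gamma^+_5$ strictly to the right of the high‑frequency spectrum.

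For the derivative estimates I would note that each of $\lambda+|\xi|^2$, $\lambda+|\xi|^2+a$, $P_1(\xi,\lambda)$, $P_2(\xi,\lambda)$ is a polynomial in $\xi$ of degree $2$, $2$, $4$, $4$ respectively, with coefficients bounded uniformly for $\lambda\in\Gamma^+_5$ by $|\lambda|\le M$; since $|\xi|$ is bounded below on the region in question, one gets at once $|\pd_\xi^\alpha(\lambda+|\xi|^2)|\le C|\xi|^{2-|\alpha|}$, $|\pd_\xi^\alpha(\lambda+|\xi|^2+a)|\le C|\xi|^{2-|\alpha|}$ and $|\pd_\xi^\alpha P_2(\xi,\lambda)|\le C|\xi|^{4-|\alpha|}$ (the ``wrong‑sign'' powers of $|\xi|$ being absorbed into $C$). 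Feeding these together with the lower bounds above into Bell's formula for $\pd_\xi^\alpha(P^{-1})$ — exactly as in the proof of Lemma~\ref{lem:spectrum2} — gives $|\pd_\xi^\alpha(\lambda+|\xi|^2)^{-1}|\le C|\xi|^{-2-|\alpha|}$ and $|\pd_\xi^\alpha(P_2(\xi,\lambda)^{-1})|\le C|\xi|^{-4-|\alpha|}$; for $P_1^{-1}$ I would instead write $P_1(\xi,\lambda)^{-1}=(\lambda+|\xi|^2)^{-1}(\lambda+|\xi|^2+a)^{-1}$ and apply the Leibniz rule to the two factors just estimated. I expect the only real obstacle to be the $P_2$ lower bound near the negative real axis; once the explicit roots and the smallness of $\gamma_\infty$ are exploited, the remainder is the routine polynomial bookkeeping already carried out in Lemma~\ref{lem:spectrum2}.
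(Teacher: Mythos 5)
Your proof is correct and follows the paper's route: lower-bound $|\lambda+|\xi|^2|$, $|\lambda+|\xi|^2+a|$, and $|\lambda-\lambda_\pm|$ by a multiple of $|\xi|^2$ on $\Gamma^+_5\times\{|\xi|\ge A_0/6\}$, then feed these together with the polynomial upper bounds on the numerators into Bell's formula and Leibniz's rule. Where the paper invokes the asymptotic expansion \eqref{expansion2} (stated only for $|\xi|\to\infty$) to deduce $|\lambda-\lambda_\pm|\ge|\xi|^2-\gamma_\infty$, you verify $\mathrm{Re}\,\lambda_\pm(\xi)\le-|\xi|^2$ for all $\xi$ directly from the exact quadratic roots, which is more careful and closes a small imprecision in the printed argument.
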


\begin{proof}
For any $\lambda \in \Gamma^+_5$ and $|\xi| \geq A_0/6$, we have
\begin{equation}\label{lambda xi 2}
|\lambda + |\xi|^2| 
\geq |{\rm Re}(\lambda + |\xi|^2)|
=|\xi|^2 - \gamma_\infty \geq |\xi|^2 - 2^{-1}(A_0/6)^2 
\geq 2^{-1}|\xi|^2.
\end{equation}
Similarly, we have
\begin{equation}\label{lambda xi a}
|\lambda + |\xi|^2 + a| 
\geq 2^{-1}|\xi|^2.
\end{equation}
 
By \eqref{expansion2}, we have
\begin{equation}\label{es p2 2}
|\lambda - \lambda_\pm| 
\geq
- \gamma_\infty + |\xi|^2
\geq 2^{-1}|\xi|^2
\end{equation}
for any $\lambda \in \Gamma^+_5$ and $|\xi| \geq A_0/6$.
By \eqref{lambda xi 2}, \eqref{lambda xi a}, 
\eqref{es p2 2}, $|\pd_\xi^\alpha |\xi|^2| \leq 2|\xi|^{2-|\alpha|}$, 
Leibniz's rule, and Bell's formula, we have
\eqref{es p2}.

\end{proof}
Let $\alpha \in \N_0^N$.
Note that there exists a positive constant $C$ 
such that for any
$\lambda \in \Gamma^+_5$ and $|\xi| \geq A_0/6$,
$|\lambda| \leq C |\xi|^2$.
Using \eqref{es p2} and Leibniz's rule,
there exists a positive constant $C$ such that for any
$\lambda \in \Gamma^+_5$ and $|\xi| \geq A_0/6$,
\begin{align*}
&|\pd_\xi^\alpha (\lambda L(\xi, \lambda), L(\xi, \lambda), 
i\xi_a L(\xi, \lambda), 
i\xi_a i\xi_b L(\xi, \lambda))|
\leq C|\xi|^{-|\alpha|},\\
&|\pd_\xi^\alpha (i\xi_a \lambda \tilde \ell_j (\xi, \lambda), i\xi_a \lambda \tilde m_k (\xi, \lambda), 
i\xi_a i\xi_b i\xi_c \tilde \ell_j (\xi, \lambda), i\xi_a i\xi_b i\xi_c \tilde m_k (\xi, \lambda))|
\leq C|\xi|^{-|\alpha|},
\end{align*}
where $L(\xi, \lambda) = \ell_j(\xi, \lambda), \tilde \ell_j(\xi, \lambda), m_j(\xi, \lambda), \tilde m_k(\xi, \lambda)$,
$j = 1, 2$, $k=1, \dots, 4$, $a, b, c = 1, \dots ,N$.
Applying Fourier multiplier theorem,
for any $(\bff, \bg) \in W^{0, 1}_q(\R^N)$,
we have
\[
\|(\lambda \bu_\infty, \lambda \bQ_\infty)\|_{W^{0, 1}_q(\R^N)}+
\|(\bu_\infty, \bQ_\infty)\|_{W^{2, 3}_q(\R^N)} \leq C \|(\bff, \bg)\|_{W^{0, 1}_q(\R^N)},
\]
so that
\begin{equation*}
\begin{aligned}
&\|\pd_t (S^5_\infty(t) (\bff, \bg), T^5_\infty(t) (\bff, \bg))\|_{W^{0, 1}_q(\R^N)}
+\|(S^5_\infty(t) (\bff, \bg), T^5_\infty(t) (\bff, \bg))\|_{W^{2, 3}_q(\R^N)}\\
&\enskip \leq C e^{-\gamma_\infty t}\|(\bff, \bg)\|_{W^{0, 1}_q(\R^N)}
\end{aligned}
\end{equation*}
for any $t>0$ with some constant $C$,
which combined with \eqref{s4}, we obtain Theorem \ref{decay for u}
\eqref{t>0} (b).

\section{A proof of Theorem \ref{global}}\label{proof of main}
We prove Theorem \ref{global} by the Banach fixed point argument.
Let $p$, $q_1$, and $q_2$ be exponents given in Theorem \ref{global}.
Let $\epsilon$ be a small positive number 
and let $\CN (\bu, \bQ)$ be the norm defined in \eqref{N}.
We define the underlying space $\CI_{T, \epsilon}$ as
\begin{align}
\CI_{T, \epsilon}
= \{ (\bu, \bQ) \in X_{p, q_2, T} \cap X_{p, q_1, T} 
\mid &(\bu, \bQ)|_{t=0} = (\bu_0, \bQ_0), 
\enskip \CN(\bu, \bQ) (T) \leq \epsilon, \nonumber\\
\label{infty}
&
\sup_{0<t<T}\|\bQ(\cdot, t)\|_{L_\infty(\R^N)}
\leq 1
\}.
\end{align}
Given $(\bu, \bQ) \in \CI_{T, \epsilon}$, 
let $(\bv, \bP)$ be a solution to the equation:
\begin{equation}\label{nonlinear1}
\left\{
\begin{aligned}
&\pd_t \bv -\Delta \bv + \nabla \fp +\beta \DV \Delta \bP = \bff(\bu, \bQ), 
\enskip \dv \bv=0
& \quad&\text{in $\R^N$ for $t \in (0, T)$}, \\
&\pd_t \bP - \beta \bD(\bv) -\Delta \bP + a\left(\bP - \frac{1}{N} \tr \bP \bI \right)
=\bg(\bu, \bQ) & \quad&\text{in $\R^N$ for $t \in (0, T)$}, \\
&(\bv, \bP) = (\bu_0, \bQ_0)& \quad&\text{in $\R^N$}.
\end{aligned}\right.
\end{equation}
We shall prove the following inequality in several steps:
\begin{equation}\label{extend}
\CN(\bv, \bP) (T) \leq C \epsilon^2.
\end{equation}

To prove \eqref{extend},
we divide $(\bv, \bP)$ into two parts : $\bv = \bv_1 + \bv_2$
and $\bP = \bP_1 + \bP_2$,
where $(\bv_1, \bP_1)$ satisfies time shifted equations:
\begin{equation}\label{shift}
\left\{
\begin{aligned}
&\pd_t \bv_1 +\lambda_1 \bv_1- \Delta \bv_1 + \nabla \fp +\beta \DV \Delta \bP_1 = \bff(\bu, \bQ), 
\quad\dv \bv_1=0
& \quad&\text{in $\R^N$ for $t \in (0, T)$}, \\
&\pd_t \bP_1 +\lambda_1 \bP_1 - \beta \bD(\bv_1) 
-\Delta \bP_1 + a\left(\bP_1 - \frac{1}{N} \tr \bP_1 \bI \right)
=\bg(\bu, \bQ) & \quad&\text{in $\R^N$ for $t \in (0, T)$}, \\
&(\bv_1, \bP_1) = (0, O)& \quad&\text{in $\R^N$}
\end{aligned}\right.
\end{equation}
and $(\bv_2, \bP_2)$ satisfies compensation equations:
\begin{equation}\label{compensation}
\left\{
\begin{aligned}
&\pd_t \bv_2 - P\Delta \bv_2 + \beta P\DV \Delta \bP_2 = \lambda_1\bv_1, 
& \quad&\text{in $\R^N$ for $t \in (0, T)$}, \\
&\pd_t \bP_2 - \beta \bD(\bv_2) 
-\Delta \bP_2 + a\left(\bP_2 - \frac{1}{N} \tr \bP_2 \bI \right)
=\lambda_1 \bP_1 & \quad&\text{in $\R^N$ for $t \in (0, T)$}, \\
&(\bv_2, \bP_2) =(\bu_0, \bQ_0) & \quad&\text{in $\R^N$},
\end{aligned}\right.
\end{equation}
where $P$ is solenoidal projection.

\subsection{Analysis of time shifted equations}
We consider 
the following linearized problem for \eqref{shift}:
\begin{equation}\label{linearshift}
\left\{
\begin{aligned}
&\pd_t \bu +\lambda_1 \bu- \Delta \bu + \nabla\fp + \beta \DV \Delta \bQ = \bff, 
\quad \dv\bu=0
& \quad&\text{in $\R^N$ for $t \in (0, T)$}, \\
&\pd_t \bQ +\lambda_1 \bQ - \beta \bD(\bu) 
-\Delta \bQ + a\left(\bQ - \frac{1}{N} \tr \bQ \bI \right)
=\bg & \quad&\text{in $\R^N$ for $t \in (0, T)$}, \\
&(\bu, \bQ) =(0, O) &\quad&\text{in $\R^N$}.
\end{aligned}\right.
\end{equation}
By Theorem 14 in \cite{S2} and Theorem \ref{thm:mr}, 
we have the following theorem for \eqref{linearshift}.
\begin{thm}\label{thm:mr2}
Let $1 < p, q < \infty$. Let $b \geq 0$.
Then, there exists a constant $\lambda_1 \geq 1$
such that the following assertion holds:
For any  
$(\bff, \bg) \in L_p((0, T), W_q^{0, 1}(\BR^N))$,
 problem  \eqref{linearshift} admits 
unique solutions  $(\bu, \bQ) \in X_{p, q, T}$
possessing the estimate 
\begin{equation}\label{mr es2-w}
\begin{aligned}
&\|<t>^b \pd_t (\bu, \bQ)\|_{L_p((0, T), W^{0, 1}_q(\BR^N))}
+
\|<t>^b (\bu, \bQ)\|_{L_p((0, T), W^{2, 3}_q(\BR^N))}\\
&\leq
C\|<t>^b(\bff, \bg)\|_{L_p((0, T), W^{0, 1}_q(\BR^N))}. 
\end{aligned}
\end{equation}
\end{thm}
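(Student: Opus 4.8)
The plan is to deduce Theorem~\ref{thm:mr2} from the half-line maximal $L_p$-$L_q$ regularity of Theorem~\ref{thm:mr} by transferring the polynomial weight $<t>^b$ onto the unknown and absorbing the resulting lower-order commutator term once $\lambda_1$ is taken large enough; this is precisely the transference mechanism of \cite[Theorem~14]{S2}, applied to the operator $\CA$ of \eqref{op}.

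First I would establish an \emph{unweighted} maximal regularity statement on $\BR_+$ for the time-shifted problem. Writing $\bU=(\bu,\bQ)$, $\bF=(P\bff,\bg)$ and substituting $w=e^{\lambda_1 t}\bU$, the shifted system $\pd_t\bU+\lambda_1\bU-\CA\bU=\bF$ with $\bU|_{t=0}=0$ becomes the unshifted one $\pd_t w-\CA w=e^{\lambda_1 t}\bF$, $w|_{t=0}=0$. Extending the data on $(0,T)$ by zero to $\BR_+$, applying Theorem~\ref{thm:mr} with $\gamma=\lambda_1\ge\gamma_1$, multiplying the output by $e^{-\lambda_1 t}$, and using the multiplier bound $|(\tau\pd_\tau)^\ell(\gamma/\lambda)|\le C$ on $\Sigma_{\epsilon,\lambda_0}$ (as in the proof of Theorem~\ref{thm:mr}, via \cite[Proposition~3.6 and Corollary~3.7]{DHP}), I would obtain
\begin{equation*}
\begin{aligned}
&\lambda_1\|\bU\|_{L_p(\BR_+,W^{0,1}_q(\BR^N))}+\|\pd_t\bU\|_{L_p(\BR_+,W^{0,1}_q(\BR^N))}+\|\bU\|_{L_p(\BR_+,W^{2,3}_q(\BR^N))}\\
&\qquad\le C\|\bF\|_{L_p(\BR_+,W^{0,1}_q(\BR^N))},
\end{aligned}
\end{equation*}
with $C$ independent of $\lambda_1\ge\gamma_1$, since the constant in Theorem~\ref{thm:mr} depends only on $\gamma_1$ and not on $\gamma$. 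Restricting to $(0,T)$ gives a solution $(\bu,\bQ)\in X_{p,q,T}$ of \eqref{linearshift}, whose uniqueness I would get from the Duhamel formula $\bU(t)=\int_0^t e^{(\CA-\lambda_1)(t-s)}\bF(s)\,ds$ together with the analytic semigroup of Theorem~\ref{thm:semi1}.

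Next I would insert the weight. Setting $V=<t>^b\bU$ on $\BR_+$ --- which again belongs to the maximal regularity class because $\bF$ is compactly supported in time and $\CA-\lambda_1$ is exponentially stable for $\lambda_1>\gamma_1$ --- and using $\frac{d}{dt}<t>^b=bt<t>^{b-2}$ together with $0\le bt<t>^{-2}\le b/2$, I find that $V$ solves
\begin{equation*}
\pd_t V+\lambda_1 V-\CA V=<t>^b\bF+\tfrac{bt}{<t>^2}V,\qquad V|_{t=0}=0,
\end{equation*}
where the last term has $W^{0,1}_q$-norm at most $\tfrac b2\|V\|_{L_p(\BR_+,W^{0,1}_q(\BR^N))}$. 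Applying the displayed estimate to $V$ and keeping the factor $\lambda_1$ on the left yields $(\lambda_1-Cb/2)\|V\|_{L_p(\BR_+,W^{0,1}_q(\BR^N))}\le C\|<t>^b\bF\|_{L_p(\BR_+,W^{0,1}_q(\BR^N))}$; choosing $\lambda_1\ge\gamma_1$ so large that $Cb/2\le\lambda_1/2$ absorbs the perturbation, after which the full estimate gives $\|\pd_t V\|_{L_p(\BR_+,W^{0,1}_q(\BR^N))}+\|V\|_{L_p(\BR_+,W^{2,3}_q(\BR^N))}\le C\|<t>^b\bF\|$. Restricting to $(0,T)$ and rewriting $<t>^b\pd_t\bU=\pd_t V-\tfrac{bt}{<t>^2}V$ produces \eqref{mr es2-w}, since $\|<t>^b\bF\|\le C\|<t>^b(\bff,\bg)\|$.

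The main obstacle is the bookkeeping with $\lambda_1$: one must know that the maximal-regularity constant does not deteriorate as $\lambda_1\to\infty$ and that ``$\lambda_1$ times the $W^{0,1}_q$-norm of $\bU$'' is bounded by the data uniformly in $\lambda_1$, so that the perturbation $\tfrac{bt}{<t>^2}V$ --- whose size is governed by $b$, not by $\lambda_1$ --- can genuinely be absorbed by enlarging $\lambda_1$. Everything else, namely the passage from $\BR_+$ to $(0,T)$ and the elementary identity for the $t$-derivative of $<t>^b$, is routine.
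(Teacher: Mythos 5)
Your proposal is correct, but its handling of the weight step is genuinely different from the paper's. For the unweighted shifted estimate you substitute $w=e^{\lambda_1 t}\bU$ and invoke Theorem~\ref{thm:mr} at $\gamma=\lambda_1$, whereas the paper applies the Weis theorem directly with the shifted resolvent operators $\CA(\lambda+\lambda_1)$, $\CB(\lambda+\lambda_1)$ from Theorem~\ref{thm:Rbdd}; these two routes are essentially equivalent, and your use of the symbol bound $|(\tau\pd_\tau)^\ell(\gamma/\lambda)|\leq C$ to recover $\lambda_1\|\bU\|_{L_p(W^{0,1}_q)}$ on the left, with a constant uniform in $\lambda_1\geq\gamma_1$, is sound. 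The more interesting divergence is in inserting the weight: you write the commutator as $\tfrac{bt}{<t>^2}V$, a term in the \emph{weighted} unknown $V$, and absorb it by taking $\lambda_1\geq Cb$, handling all $b\geq 0$ in a single step. The paper instead writes the commutator as $(\pd_t<t>^b)\bu$ and, for $b\in(0,1]$, bounds it by $\|\bu\|_{L_p(W^{0,1}_q)}$ using the \emph{already established} unweighted estimate — no absorption and no need for the $\lambda_1$ factor on the left — and then iterates the argument for $b>1$. Your absorption argument buys a uniform treatment of all $b$ without induction, at the modest price of having to certify that the maximal-regularity constant does not deteriorate in $\lambda_1$ (which you do check); the paper's iteration avoids that bookkeeping but requires repeating the step for each integer increment of $b$. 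Both are correct proofs of Theorem~\ref{thm:mr2}.
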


\begin{proof}
Let $\bff_0$ and $\BG_0$ be the zero extension of $\bff$ and $\BG$ outside of $(0, T)$. 
Applying Laplace transform to equations \eqref{linearshift} replaced $\bff$ and $\bG$ with
$\bff_0$ and $\bG_0$ yields that
$$\begin{aligned}
(\lambda+\lambda_1)\CL[\bu] - \Delta \CL[\bu] + \nabla\CL[\fp]
+ \beta \DV\Delta \CL[\BQ] = \CL[\bff], \quad\dv \CL[\bu]=0&&\quad&
\text{in $\BR^N$}, \\
(\lambda+\lambda_1)\CL[\BQ] - \beta \bD(\CL[\bu]) - \Delta\CL[\BQ]
+a(\CL[\BQ] - \frac1N{\rm tr}\CL[\BQ]\,\bI) = \CL[\BG]
&&\quad&\text{in $\BR^N$}.
\end{aligned} $$
Let $\CA(\lambda)$ and $\CB(\lambda)$ be $\CR$-bounded solution operators given
in Theorem \ref{thm:Rbdd} and using
these operators we have
$$\bu= \CL^{-1}[\CA(\lambda+\lambda_1)(\CL[\bff_0](\lambda), \CL[\BG_0](\lambda))], \quad
\BQ = \CL^{-1}[\CB(\lambda+\lambda_1)(\CL[\bff_0](\lambda), \CL[\BG_0](\lambda))].
$$
Here, choosing $\lambda_1 > 0$ so large that $\lambda + \lambda_1 \in 
\Sigma_{\sigma, \lambda_0}$ for any $\lambda= i\tau \in i\BR$ and applying
the Weis operator valued Fourier multiplier theorem \cite{W}, we have
\begin{align*}
\|\pd_t&(\bu, \BQ)\|_{L_p(\BR, W^{0,1}_q(\BR^N))} 
+ \|(\bu, \BQ)\|_{L_p(\BR, W^{2,3}_q(\BR^N))}\\
&\leq C\|(\bff_0, \BG_0)\|_{L_p(\BR, W^{0,1}_q(\BR^N))}
= C\|(\bff, \BG)\|_{L_p((0, T), W^{0,1}_q(\BR^N))}.
\end{align*}
Noting that $|(\tau\pd_\tau)^\ell\gamma/(\lambda+\lambda_1)|
\leq C_\ell$ for any $\lambda=\gamma + i\tau \in \BR_+ + i\BR$ with
some constant $C_\ell$ depending on $\ell=0,1,2,\ldots$, and applying
the Weis operator valued Fourier multiplier theorem to 
\begin{align*}
\gamma \bu& = \CL^{-1}[\{\gamma/(\lambda+\lambda_1)\}(\lambda+\lambda_1)
\CA(\lambda+\lambda_1)(\CL[\bff_0], \CL[\BG_0])], \\
\gamma\BQ &= \CL^{-1}[\{\gamma/(\lambda+\lambda_1)\}(\lambda+\lambda_1)\CB(\lambda+\lambda_1)(\CL[\bff_0], \CL[\BG_0])].
\end{align*}
yields that 
$$\gamma\|e^{-\gamma t}(\bu, \BQ)\|_{L_p(\BR, W^{0,1}_q(\BR^N)}
\leq C\|e^{-\gamma t}(\bff_0, \BG_0)\|_{L_p( \BR, W^{0,1}_q(\BR^N))}
= C\|(\bff, \BG)\|_{L_p((0, T), W^{0,1}_q(\BR^N))}
$$
for any $\gamma>0$.  Since $\CL[(\bff_0, \BG_0)](\lambda)$ is holomorphic 
in $\BR_+ + i\BR$ as follows from $(\bff_0, \BG_0)=(0, O)$ for $t < 0$,
by Cauchy's theorem in theory of one complex variable, $\bu$ and $\BQ$
are independent of choice of $\gamma > 0$.  In particular, we have
$$\gamma\|(\bu, \BQ)\|_{L_p((-\infty, 0), W^{0,1}_q(\BR^N))} 
\leq \gamma\|e^{-\gamma t}(\bu, \BQ)\|_{L_p(\BR, W^{0,1}_q(\BR^N))} 
\leq C\|(\bff, \BG)\|_{L_p((0, T), W^{0,1}_q(\BR^N))}
$$
for any $\gamma > 0$, where $C$ is a constant independent of $\gamma$.
Thus, letting $\gamma \to \infty$ yields that $(\bu, \BQ)$ vanishes for
$t < 0$.  But, $(\bu, \BQ) \in C^0(\BR, B^{2(1-1/p)}_{q,p}(\BR^N) 
\times B^{1+2(1-1/p)}_{q,p}(\BR^N))$ as follows from real interpolation theorem, 
and so $(\bu, \BQ)|_{t=0} = (0, O)$. 

For $b \in (0, 1]$, setting $<t>^b\bu=\bv$ and $<t>^b\BQ = \BS$, we have 
$$
\left\{
\begin{aligned}
&\pd_t\bv+\lambda_1 \bv- \Delta \bv + \nabla(<t>^b\fp)
 + \beta \DV \Delta \BS =<t>^b\bff + (\pd_t<t>^b)\bu,
\quad \dv\bv=0 
& \quad&\text{in $\R^N$ for $t \in (0, T)$}, \\
&\pd_t \BS +\lambda_1 \BS - \beta \bD(\bv) 
-\Delta \BS + a\left(\BS - \frac{1}{N} \tr \,\BS \,\bI \right)
=<t>^b\BG +(\pd_t<t>^b)\BQ& \quad&\text{in $\R^N$ for $t \in (0, T)$}, \\
&(\bv, \BS) =
 (0, O) &\quad&\text{in $\R^N$}.
\end{aligned}\right.
$$
Noting that $|\pd_t(<t>^b)| \leq 1$ and applying the estimate for $(\bu, \BQ)$
yields that
\begin{align*}
\|<t>^b&\pd_t(\bu, \BQ)\|_{L_p(\BR, W^{0,1}_q(\BR^N))} 
+ \|<t>^b(\bu, \BQ)\|_{L_p(\BR, W^{2,3}_q(\BR^N))}\\
&\leq C\|<t>^b(\bff_0, \BG_0)\|_{L_p(\BR, W^{0,1}_q(\BR^N))}
= C\|<t>^b(\bff, \BG)\|_{L_p((0, T), W^{0,1}_q(\BR^N))}.
\end{align*}
If $b > 1$, the repeated use of the argument above yield 
estimates \eqref{mr es2-w} for any $b > 0$, which completes the proof of 
Theorem \ref{thm:mr2}. 
\end{proof}

We now estimate nonlinear terms $\bff(\bu, \BQ)$ and 
$\BG(\bff, \BQ)$. For notational simplicity, we write
$$\|f\|_{L_p((0, T), X(\BR^N))} = \|f\|_{L_p(X)}, 
\quad \|<t>^bf\|_{L_p((0, T), X(\BR^N))} = \|f\|_{L_{p,b}(X)},$$
where  $X = L_q$ or $X= W^\ell_q$.  
Noting \eqref{infty} and 
using Sobolev's embedding theorem provided by $q_2 > N$
and noting that $1-bp<0$ if $b=N/(2(2+\sigma))$ and $p=2+\sigma$,
we have the following estimates:
\begin{equation}\label{esnon}
\begin{aligned}
&\|\bff(\bu, \bQ)\|_{L_{p, b}(L_q)}\\
&\enskip \leq C (\|\bu\|_{L_\infty(L_q)}\|\nabla \bu\|_{L_{p, b}(W^1_{q_2})}
+ \|\bQ\|_{L_\infty(W^1_q)}\|\nabla \bQ\|_{L_{p, b}(W^2_{q_2})}
+ \|\bQ\|_{L_\infty(W^1_{q_2})}\|\nabla^3 \bQ\|_{L_{p, b}(L_q)}\\
&\enskip + \|\nabla \bQ\|_{L_\infty(L_q)}\|\nabla^2 \bQ\|_{L_{p, b}(W^1_{q_2})}),\\
&\|\bff(\bu, \bQ)\|_{L_{p, b}(L_{q_1/2})}
\leq C (\|\bu\|_{L_\infty(L_{q_1})}\|\nabla \bu\|_{L_{p, b}(L_{q_1})}
+ \|\bQ\|_{L_\infty(W^1_{q_1})}\|\nabla \bQ\|_{L_{p, b}(W^2_{q_1})}),\\
&\|\bg(\bu, \bQ)\|_{L_{p, b}(W^1_q)}\\
&\enskip \leq C (\|\nabla \bu\|_{L_{p, b}(W^1_{q_2})}\|\bQ\|_{L_\infty(W^1_q)}
+ \|\bu\|_{L_\infty(L_q)}\|\nabla \bQ\|_{L_{p, b}(W^2_{q_2})} 
+ \|\bQ\|_{L_\infty(L_q)}\|\nabla\bQ\|_{L_{p, b} (W^1_{q_2})}\\
&\enskip+ \|\bQ\|_{L_\infty(L_q)}\|\bQ\|_{L_{p, b}(W^1_{q_2})}),\\
&\|\bg(\bu, \bQ)\|_{L_{p, b}(W^1_{q_1/2})}\\
&\enskip \leq C (\|\nabla \bu\|_{L_{p, b}(W^1_{q_1})}\|\bQ\|_{L_\infty(W^1_{q_1})}
+ \|\bu\|_{L_\infty(L_{q_1})}\|\nabla^2 \bQ\|_{L_{p, b}(L_{q_1})} 
+ \|\bQ\|_{L_\infty(L_{q_1})}\|\nabla\bQ\|_{L_{p, b} (L_{q_1})}\\
&\enskip+ \|\bQ\|_{L_{\infty, b}(L_{q_1})}^2)
\end{aligned}
\end{equation}
with $q=q_1$ and $q_2$.
Therefore, by
\eqref{mr es2-w} and \eqref{esnon},
we have
\begin{equation}\label{n v1p1}
\begin{aligned}
&\sum_{q=q_1/2, q_1, q_2} \|<t>^b\pd_t (\bv_1, \bP_1)\|_{L_p((0, T), W^{0, 1}_{q}(\R^N))}+
\|<t>^b(\bv_1, \bP_1)\|_{L_p((0, T), W^{2,3}_{q}(\BR^N))} \leq  C\CN(\bu,  \bQ)(T)^2. 
\end{aligned}
\end{equation}

\subsection{Analysis of compensation equations}
In this subsection, we consider problem \eqref{compensation}.
To get estimates of $(\bv_2, \bP_2)$, we use the following estimates for 
$\{e^{\CA t}\}_{t \geq 0}$ associated with \eqref{l3}.
Recalling Theorem \ref{decay for u}, 
we know that estimates for $(\bu, \bQ)=e^{\CA t}(\bff, \bg)=(S(t)(\bff, \bg), T(t)(\bff, \bg))$
as follows:
\begin{equation}\label{decay for semi}
\begin{aligned}
&\|\nabla^j e^{\CA t} (\bff, \bg)\|_{W^{0,1}_p(\R^N)}
\leq C
t^{-\frac{N}{2} (\frac{1}{q} - \frac{1}{p})- \frac{j}{2}}
(\|(\bff, \bg)\|_{W^{0, 1}_q(\R^N)} + \|(\bff, \bg)\|_{W^{0,1}_p(\R^N)}),\\
&\|\pd_t e^{\CA t}(\bff, \bg)\|_{W^{0,1}_p(\R^N)}
\leq C
t^{-\frac{N}{2} (\frac{1}{q} - \frac{1}{p})-1}(
\|(\bff, \bg)\|_{W^{0, 1}_q(\R^N)} + \|(\bff, \bg)\|_{W^{0,1}_p(\R^N)})
\end{aligned}
\end{equation}
for $t \geq 1$, $1< q < 2 \leq p\leq \infty$, $\ell=0, 1$, 
$j=0, 1, 2$. 
Moreover, 
\begin{align}\label{conti}
\|e^{\CA t}(\bff, \bg)\|_{W^{2,3}_p(\R^N)} + 
\|\pd_t e^{\CA t}(\bff, \bg)\|_{W^{0,1}_p(\R^N)} 
\leq C\|(\bff, \bg)\|_{W^{2,3}_p(\R^N)}
\end{align}
for $0 < t < 2$.
Applying Duhamel's principle to \eqref{compensation} furnishes that
\begin{equation}\label{duhamel}
(\bv_2, \bP_2) = e^{\CA t}(\bu_0, \bQ_0)
+ \lambda_1 \int^t_0 e^{\CA (t-s)}(\bv_1, \bP_1)(\cdot, s)\, ds.
\end{equation}
Let
\begin{align*}
[[(\bv_1, \bP_1)(\cdot, s)]] 
&=\|(\bv_1, \bP_1)(\cdot, s)\|_{W^{0, 1}_{q_1/2}(\R^N)} + 
\sum_{q=q_1, q_2} \|(\bv_1, \bP_1)(\cdot, s)\|_{W^{2, 3}_q(\R^N)}.
\end{align*}
Setting 
\[
\tilde \CN(\bv_1, \bP_1)(T) = \left( \int^T_0 (<t>^b [[(\bv_1, \bP_1)(\cdot, t)]])^p \,dt\right)^{1/p}
\]
and using \eqref{mr es2-w} and \eqref{esnon}, we have
\begin{equation}\label{tilde n v1p1}
\tilde \CN(\bv_1, \bP_1)(T) \leq C \epsilon^2.
\end{equation}
In what follows, we estimate   $\CN(\bv_2, \bP_2)$
with the help of $\tilde \CN(\bv_1, \bP_1)$. 

Using \eqref{decay for semi} to obtain decay estimates of $e^{\CA t}(\bu_0, \BQ_0)$ for
$t>1$ and Theorem \ref{thm:mr} to estimate $e^{\CA t}(\bu_0, \BQ_0)$ for $0 < t < 1$
we have 
\begin{equation}\label{decay est. initial}
\CN(e^{\CA t}(\bu_0, \BQ_0)) \leq C\CI
\end{equation}
where we have set 
$$\CI = \|(\bu_0, \BQ_0)\|_{W^{0,1}_{q_1/2}(\BR^N)} + 
\sum_{q=q_1, q_2}\|(\bu_0, \BQ_0)\|_{B^{2(1-1/p)}_{q,p}(\BR^N) \times B^{1+2(1-1/p)}_{q,p}(\BR^N)}.
$$
Set $(\tilde\bv_2, \tilde\BP_2) = \int^t_0e^{\CA(t-s)}(\bv_1, \BP_1)(\cdot, s)\,ds$ and 
our main task is to estimate $(\tilde\bv_2, \tilde\BP_2)$.

\subsubsection{Estimates of spatial derivatives in $L_p$-$L_q$}

First, we consider the case $2\leq t \leq T$.
Let $(\bv_3, \bP_3)=(\bar \nabla^1 \nabla \tilde\bv_2, 
\bar \nabla ^2 \nabla \tilde\bP_2)$ when $q=q_1$
and
$(\bv_3, \bP_3)=(\bar \nabla^2 \tilde \bv_2, \bar \nabla ^3 \tilde\bP_2)$ when $q=q_2$.
Here, $\bar \nabla^m f =(\pd_x^\alpha f \mid |\alpha| \leq m)$.
\begin{align*}
&\|(\bv_3, \bP_3)(\cdot, t)\|_{L_q(\R^N)}\\
& \enskip \leq C 
\left( \int^{t/2}_0 + \int^{t-1}_{t/2} + \int^t_{t-1}\right) 
\|(\bar \nabla^1 \nabla, \bar \nabla ^2 \nabla) \text{ or }(\bar \nabla^2, \bar \nabla ^3) 
e^{\CA (t-s)} (\bv_1, \bP_1)(\cdot, s)\|_{L_q(\R^N)}\, ds\\
&\enskip =: I_q(t) + II_q(t)+III_q(t).
\end{align*}
We shall consider estimates of $I_q(t)$, $II_q(t)$, and $III_q(t)$
by \eqref{decay for semi}.
Setting $\ell = N/(2 (2 + \sigma)) + 1/2$,
we see that
all the decay rates used below, which are obtained by \eqref{decay for semi},
are greater than or equal to $\ell$.
In fact,
by \eqref{condi:pq} and
\eqref{decay for semi} with $(p, q)=(q_1, q_1/2)$, $(q_2, q_1/2)$, 
we have the following decay rates:
\begin{align*}
&\frac{N}{2}\left(\frac{2}{q_1} - \frac{1}{q_1}\right) + \frac12
= \frac{N}{2 (2 + \sigma)} + \frac{j}{2} \geq \ell \enskip (j=1, 2),\\
&\frac{N}{2}\left(\frac{2}{q_1} - \frac{1}{q_2}\right) + \frac{j}{2} 
> \frac{2N}{2 (2 + \sigma)} - \frac{N-(2+\sigma)}{2(2+\sigma)}+ \frac{j}{2} 
\geq \ell \enskip (j=0, 1, 2).
\end{align*}
Using \eqref{decay for semi} with $(p, q)=(q, q_1/2)$, we have
\begin{align*}
I_q(t)&\leq C\int^{t/2}_0 (t-s)^{-\ell}[[(\bv_1, \bP_1)(\cdot, s)]]\,ds\\
& \leq C(t/2)^{-\ell}
\left(\int^\infty_0 <s>^{-p' b}\right)^{1/p'}
\left(\int^T_0 \left(<s>^b [[(\bv_1, \bP_1)(\cdot, s)]] \right)^p \,ds\right)^{1/p}\\
&\leq Ct^{-\ell} \tilde \CN (\bv_1, \bP_1)(T).
\end{align*}
Noting that $b = N/(2(2+\sigma))$, 
we see that $\ell - b=1/2$, so that $1-(\ell - b)p<0$ for $p=2+\sigma$.
Thus, we have
\begin{equation}\label{Iq}
\begin{aligned}
\int^T_2\left(<t>^b I_q(t)\right)^p \,dt
&\leq C \int^T_2 <t>^{-(\ell - b)p}\,dt ~\tilde \CN (\bv_1, \bP_1)(T)^p\\
& \leq C \tilde \CN (\bv_1, \bP_1)(T)^p.
\end{aligned}
\end{equation}
Using $<t>^b \leq C <s>^b$ for $t/2 < s < t-1$ and H\"older's inequality, we have
\begin{equation*}\label{I}
\begin{aligned}
&<t>^b II_q(t) \\
&\enskip \leq C \left( \int^{t-1}_{t/2} (t-s)^{-\ell}\,ds \right)^{1/p'} 
\left(\int^{t-1}_{t/2}  (t-s)^{-\ell} \left(<s>^b [[(\bv_1, \bP_1)(\cdot, s)]]\right)^p\,ds\right)^{1/p}.
\end{aligned}
\end{equation*}
By Fubini's theorem, we have
\begin{equation}\label{IIq}
\begin{aligned}
\int^T_2 \left(<t>^b II_q(t)\right)^p \,dt
&\enskip \leq C \int^T_1 \int^{2s}_{s+1} (t-s)^{-\ell}\,dt 
\left(<s>^b [[(\bv_1, \bP_1)(\cdot, s)]]\right)^p\,ds\\
&\leq C \tilde \CN (\bv_1, \bP_1)(T)^p.
\end{aligned}
\end{equation}
By \eqref{conti}, we have
\[
III_q(t) \leq C \int^t_{t-1} \|(\bv_1, \bP_1)(\cdot, s)\|_{W^{2, 3}_q(\R^N)}
\leq C \int^t_{t-1} [[(\bv_1, \bP_1)(\cdot, s)]]\,ds.
\]
Employing the same method as in the estimate of $II_q(t)$, we have
\begin{equation}\label{IIIq}
\begin{aligned}
\int^T_2 \left(<t>^b III_q(t)\right)^p \,dt
\leq C \tilde \CN (\bv_1, \bP_1)(T)^p.
\end{aligned}
\end{equation}
Combining \eqref{Iq}, \eqref{IIq}, and \eqref{IIIq}, we have
\begin{equation}\label{pdx}
\int^T_2 \left(<t>^b \|(\bv_3, \bP_3)(\cdot, t)\|_{L_q(\R^N)}\right)^p \,dt
\leq C \tilde \CN (\bv_1, \bP_1)(T)^p.
\end{equation}

Next, we consider the case $0<t< \min (2, T)$.
Employing the same method as in the estimate of $III_q(t)$, we have
\begin{equation*}\label{v3p3}
\begin{aligned}
\int^{\min(T, 2)}_0 \left(<t>^b \|(\bv_3, \bP_3)(\cdot, t)\|_{L_q(\R^N)}\right)^p \,dt
\leq C \tilde \CN (\bv_1, \bP_1)(T)^p,
\end{aligned}
\end{equation*}
which combined \eqref{pdx}, we have
$$
\begin{aligned}
\int^T_0 \left(<t>^b \|(\bv_3, \bP_3)(\cdot, t)\|_{L_q(\R^N)}\right)^p \,dt
\leq C \tilde \CN (\bv_1, \bP_1)(T)^p,
\end{aligned}
$$
that is, 
\begin{equation}\label{mainest:1}
\|<t>^b\nabla(\tilde \bv_2, \tilde \BP_2)\|_{L_p((0, T), W^{1,2}_{q_1}(\BR^N)} 
+ \|<t>^b(\tilde \bv_2, \tilde \BP_2)\|_{L_p((0, T), W^{2,3}_{q_2}(\BR^N)}
\leq C\tilde \CN(\bv_1, \BP_1)(T).
\end{equation}

\subsubsection{Estimates of time derivatives in $L_p$-$L_q$}
Let $q = q_1, q_2$.
By \eqref{duhamel}, we have
\[
(\pd_t \tilde\bv_2, \pd_t \tilde\bP_2) 
= -\lambda_1 (\bv_1, \bP_1)
- \lambda_1 \int^t_0 \pd_t e^{\CA t}(\bv_1, \bP_1)(\cdot, s)\, ds.
\]
Setting $(\bv_4, \bP_4) = \int^t_0 \pd_t e^{\CA t}(\bv_1, \bP_1)(\cdot, s)\, ds$
and 
employing the same calculation as in the proof of \eqref{mainest:1}, we have
$$
\begin{aligned}
\int^T_0 \left(<t>^b \|(\bv_4, \bP_4)(\cdot, t)\|_{W^{0, 1}_q(\R^N)}\right)^p \,dt
\leq C \tilde \CN (\bv_1, \bP_1)(T)^p, 
\end{aligned}
$$
which yields that 
\begin{equation}\label{mainest:2}
\begin{aligned}
&\|<t>^b\pd_t(\tilde\bv_2, \tilde \bP_2)(\cdot, t)\|_{L_p((0, T), W^{0, 1}_q(\R^N))} 
 \leq  C\tilde \CN(\bv_1, \BP_1)(T).
\end{aligned}
\end{equation}

\subsubsection{Estimates of the lower order term in $L_\infty$-$L_q$}
Let $q = q_1$, $q_2$.
First, we consider the case $2 < t <T$.
By \eqref{duhamel}, we divide three parts as follows:
\begin{align*}
&\|(\tilde\bv_2, \tilde\bP_2)(\cdot, t)\|_{W^{0, 1}_q(\R^N)}\\
& \enskip \leq C 
\left( \int^{t/2}_0 + \int^{t-1}_{t/2} + \int^t_{t-1}\right) 
\|e^{\CA (t-s)}(\bv_1, \bP_1)(\cdot, s)\|_{W^{0, 1}_{q_1}(\R^N)}\, ds\\
&\enskip =: I_{q, 0}(t) + II_{q, 0}(t)+III_{q, 0}(t).
\end{align*}
Using \eqref{decay for semi} with $(p, q)=(q, q_1/2)$
and noting that $1-bp' < 0$ provided by $0<\sigma<1/2$
and $\frac{N}{2}\left(\frac{2}{q_1}-\frac1q\right) \geq b$ for $q=q_1, q_2$, 
we have
\begin{align}
I_{q, 0}(t)
&\leq C\int^{t/2}_0 (t-s)^{-\frac{N}{2}\left(\frac{2}{q_1}-\frac1q\right)}[[(\bv_1, \bP_1)(\cdot, s)]]\,ds\\
&\leq C(t/2)^{-b}\Bigl\{\int^{t/2}_0<s>^{-bp'}\,ds\Bigr\}\tilde \CN(\bv_1, \BP_1)(T)\\
&\leq C(t/2)^{-b} \tilde \CN (\bv_1, \bP_1)(T). \label{Iq0}\\
II_{q, 0}(t)
&\leq C \int^{t-1}_{t/2} (t-s)^{-\frac{N}{2}\left(\frac{2}{q_1}-\frac1q\right)}[[(\bv_1, \bP_1)(\cdot, s)]]\,ds \nonumber\\
&\leq C \left(\int^{t-1}_{t/2} \left((t-s)^{-b} <s>^{-b} \right)^{p'}\,ds \right)^{1/p'} 
\left( \int^{t-1}_{t/2} \left(<s>^b [[(\bv_1, \bP_1)(\cdot, s)]] \right)^p \,ds\right)^{1/p} \nonumber\\
&\leq C <t/2>^{-b} \left( \int^{t-1}_{t/2} (t-s)^{-bp'} \,ds \right)^{1/p'} 
\left(\int^{t-1}_{t/2} \left(<s>^b [[(\bv_1, \bP_1)(\cdot, s)]]\right)^p\,ds\right)^{1/p} \nonumber\\
&\leq C<t>^{-b} \tilde \CN (\bv_1, \bP_1)(T). \label{IIq0}
\end{align}
By \eqref{conti} and $<t>^b \leq C<s>^b$ for $t-1<s<t$, we have
\begin{equation}\label{IIIq0}
\begin{aligned}
III_{q, 0}(t) 
&\leq C \int^t_{t-1} [[(\bv_1, \bP_1)(\cdot, s)]]\,ds\leq  C<t>^{-b} \int^t_{t-1}<s>^b [[(\bv_1, \bP_1)(\cdot, s)]]\,ds\\
&\leq C <t>^{-b} \left( \int^t_{t-1} \,ds \right)^{1/p'} 
\left(\int^t_{t-1} \left(<s>^b [[(\bv_1, \bP_1)(\cdot, s)]]\right)^p\,ds\right)^{1/p}\\
&\leq C<t>^{-b}  \tilde \CN (\bv_1, \bP_1)(T).
\end{aligned}
\end{equation}

Combining \eqref{Iq0}, \eqref{IIq0}, and \eqref{IIIq0},
we have
\begin{equation}\label{sup}
\sup_{2 < t <T} <t>^b \|(\tilde\bv_2, \tilde\bP_2)(\cdot, t)\|_{W^{0, 1}_q(\R^N)}
\leq C \tilde \CN (\bv_1, \bP_1)(T).
\end{equation}

Next, we consider the case $0<t< \min (2, T)$.
By \eqref{conti}  we have
\begin{equation*}\label{v3p3}
\begin{aligned}
\sup_{0 < t <\min (2, T)} <t>^b \|(\tilde\bv_2, \tilde\bP_2)(\cdot, t)\|_{W^{0, 1}_q(\R^N)}
&\leq C \sup_{0 < t <\min (2, T)} <t>^b \|(\bv_1, \bP_1)(\cdot, t)\|_{W^{0, 1}_q(\R^N)}\\
&\leq C \tilde \CN (\bv_1, \bP_1)(T),
\end{aligned}
\end{equation*}
which combined \eqref{sup}, we have
\begin{equation}\label{sup2}
\begin{aligned}
\|<t>^b (\tilde \bv_2, \tilde \bP_2)\|_{L_\infty((0, T), W^{0, 1}_q(\R^N))}
\leq C \tilde \CN (\bv_1, \bP_1)(T).
\end{aligned}
\end{equation}
Therefore, by \eqref{decay est. initial}, \eqref{mainest:1}, \eqref{mainest:2},
and \eqref{sup2},
we have
\begin{equation}\label{v2p2}
\CN(\bv_2, \bP_2)(T) \leq C(\CI + \tilde\CN (\bv_1, \bP_1)(T)).
\end{equation}

\subsection{Conclusion}
 
Recall $\bv=\bv_1+\bv_2$ and $\bP=\bP_1+\bP_2$. 
We know that there exists a constant $C$
independent of $T$ such that for any $f \in L_p((0, T), W^2_q(\BR^N)) \cap W^1_p((0, T), L_q(\BR^N))$
\begin{equation}\label{time section}\begin{aligned}
\sup_{0 < t < T} \|f(\cdot, t)\|_{B^{2(1-1/p)}_{q,p}(\BR^N)} &\leq C(\|f(\cdot, 0)\|_{B^{2(1-1/p)}_{q,p}(\BR^N)} \\
&+ \|<t>^bf\|_{L_p((0, T), W^2_q(\BR^N))} + \|<t>^b\pd_tf\|_{L_p((0, T), L_q(\BR^N))})
\end{aligned}\end{equation}
as follows from the trace method of real interpolation theorem, and so using \eqref{time section}
and the fact that $(\bv_1, \BP_1)|_{t=0}=(0, O)$ we have
\begin{equation}\label{trace:est}\begin{aligned}
&\|<t>^b(\bv_1, \BP_1)\|_{L_\infty((0, T), W^{0,1}_q(\BR^N))} \\
&\quad \leq C(\|<t>^b(\bv_1, \BP_1)\|_{L_p((0, T), W^{2,3}_q(\BR^N))}
+ \|<t>^b\pd_t(\bv_1, \BP_1)\|_{L_p((0, T), W^{0,1}_q(\BR^N))}). 
\end{aligned}\end{equation}
Combining \eqref{trace:est}, \eqref{n v1p1}, and \eqref{v2p2}
yields that
\[
\CN(\bv, \bP)(T) \leq C(\CI + \CN(\bu, \BQ)(T)^2).
\]
Assuming that $\CI \leq \epsilon^2$ and recalling that  $\CN(\bu, \BQ)(T) < \epsilon$, we have
$\CN(\bv, \bP)(T) \leq C\epsilon^2$ with some constant $C$ independent of $\epsilon$.  Thus, 
choosing $\epsilon>0$ so small that $C\epsilon <1$,
we have 
\begin{equation}\label{es global}
\CN(\bv, \bP)(T) \leq \epsilon.
\end{equation}
Moreover, by the form $\bP = \bQ_0 + \int^t_0 \pd_s \bP\,ds$ and Sobolev's embedding theorem,
we have
\begin{align*}
\|\bP(\cdot, t)\|_{L_\infty(\R^N)}
& \leq C \left( \|\bQ_0\|_{W^1_{q_2}(\R^N)} 
+ \int^T_0 \|\pd_t \bP(\cdot, t)\|_{W^1_{q_2}(\R^N)}\,dt\right)\\
& \leq C\left(\|\bQ_0\|_{W^1_{q_2}(\R^N)} 
+\left( \int^\infty_0 <t>^{-p'b} \,dt\right)^{1/p'} 
\|<t>^b\pd_t \bP(\cdot, t)\|_{L_p((0, T), W^1_{q_2}(\R^N))}\right)\\
& \leq C\epsilon^2.
\end{align*}
Choosing $\epsilon>0$ so small that $C\epsilon^2 <1$ if necessary,
we have $\sup_{0<t<T}\|\bP(\cdot, t)\|_{L_\infty(\R^N)} \leq 1$.
Thus, we have $(\bv, \bP) \in \CI_{T, \epsilon}$.
Therefore, we define a map $\Phi$ acting on $(\bu, \bQ) 
\in \CI_{T, \epsilon}$ by $\Phi(\bu, \bQ) = (\bv, \bP)$, 
and then $\Phi$ is the map from $\CI_{T, \epsilon}$ into itself.
Considering the difference
$\Phi(\bu_1, \bQ_1) - \Phi(\bu_2, \bQ_2)$
for $(\bu_i, \bQ_i) \in \CI_{T, \epsilon}$ $(i = 1, 2)$,
employing the same argument as in the proof of \eqref{es global}
and choosing $\epsilon > 0$
smaller if necessary,
we see that $\Phi$ is a contraction map on $\CI_{T, \epsilon}$,
and therefore there exists a fixed point $(\bu, \bQ) \in \CI_{T, \epsilon}$
which solves \eqref{nonlinear}.
Since the existence of solutions to \eqref{nonlinear} is proved 
by the contraction mapping principle,
the uniqueness of solutions belonging to $\CI_{T, \epsilon}$ follows immediately,
which completes the proof of Theorem \ref{global}.


\begin{thebibliography}{9}
{\small 





\bibitem{Ab1} H.~Abels, G.~Dolzmann, Y.~Liu,
{\it Well-Posedness of a Fully Coupled Navier-Stokes/Q-tensor System with Inhomogeneous Boundary Data}
SIAM J. Math. Anal., {\bf 46 (4)} (2014), 3050--3077.  
\bibitem{Ab2} H.~Abels, G.~Dolzmann, Y.~Liu,
{\it Strong solutions for the Beris-Edwards model for nematic liquid crystals with homogeneous Dirichlet boundary conditions},
Adv. Differential Equations, {\bf 21 (1)-(2)} (2016), 109--152.
\bibitem{BM} J.~Ball and A.~Majumdar, 
{\it Nematic liquid crystals: From Maier–Saupe to a continuum theory}, 
Mol. Cryst. Liq. Cryst., {\bf 525} (2010), 1--11.
\bibitem{BE} A.~N. Beris and B.~J.~Edwards,
{Thermodynamics of Flowing Systems with Internal Microstructure}, 
Oxford Engrg. Sci. Ser. {\bf 36}, Oxford University Press, Oxford, New York,
(1994).
\bibitem{C} C.~Cavaterra, E.~Rocca, H.~Wu, Hao, and X.~Xu, 
{\it Global Strong Solutions of the Full Navier-Stokes and $\bQ$-Tensor System for Nematic Liquid Crystal Flows in Two Dimensions},
SIAM J. Math. Anal., {\bf 48 (2)} (2016), 1368--1399.  
\bibitem{DG}P.~G.~de Gennes, 
{The Physics of Liquid Crystals}, Clarendon Press, Oxford, (1974).
\bibitem{D} F.~De Anna, 
{\it A global 2D well-posedness result on the order tensor liquid crystal theory},
J. Differential Equations, {\bf 262 (7)} (2017), 3932--3979.
\bibitem{DHP} R.~Denk, M.~Hieber, and J.~Pr\"uss,
{ $\CR$-boundedness, Fourier Multiplier and Problems of Elliptic and Parabolic Type}, 
Memoirs of AMS. Vol 166. No. 788 (2003).  
\bibitem{DS}R.~Denk and R. Schnauberlt, {\it A structurally damped plate equation with
Dirichlet-Neumann   boundary conditions}, J. Differential Equations, {\bf  259}
(2015), 1323--1353.
\bibitem{E1}J.~L.~Ericksen, 
{\it Hydrostatic theory of liquid crystals}, Arch. Rational Mech, Anal. {\bf 9} (1962), 371--378.
\bibitem{E2}J.~L.~Ericksen,
{\it Equilibrium theory of liquid crystals}, Adv. Liq. Cryst., {\bf 2} (1976), 233--298.
\bibitem{ES} Y.~Enomoto and Y.~Shibata, {\it On the $\CR$-sectoriality 
and its application to    
some mathematical study of the viscous compressible fluids}, 
Funk. Ekvac., {\bf 56} (2013), 441--505. 
\bibitem{HD}J.~Huang and S~.Ding,
{\it Global well-posedness for the dynamical $Q$-tensor model of liquid crystals},
Science China Mathematics, {\bf 58} (2015), 1349--1366.
\bibitem{L1} F.~M.~Leslie, 
{\it Some constitutive equations for anisotropic fluids}, Quart. J. Mech. Appl. Math., {\bf 19} (1966),
357--370.
\bibitem{L2} F.~M.~Leslie,
{\it Some constitutive equations for liquid crystals}, Arch. Rational Mech. Anal., {\bf 28 (4)} (1968),
265--283.
\bibitem{LW} Y.~Liu and W.~Wang, 
{\it On the initial boundary value problem of a Navier-Stokes/$Q$-tensor model for liquid crystals},
Discrete Contin. Dyn. Syst. Ser. {\bf B 23 (9)} (2018), 3879--3899.
\bibitem{M} A.~Majumdar, 
{\it Equilibrium order parameters of liquid crystals in the Landau-de Gennes theory},
European J. Appl. Math., {\bf 21} (2010), 181--203.
\bibitem{MZ} A.~Majumdar and A.~Zarnescu, 
{\it Landau-de Gennes theory of nematic liquid crystals: The
Oseen-Frank limit and beyond}, Arch. Ration. Mech. Anal., {\bf 196} (2010), 227--280.
\bibitem{PZ1} M.~Paicu, and A.~Zarnescu,
{\it Global existence and regularity for the full coupled Navier-Stokes and $Q$-tensor system}, 
SIAM J. Math. Anal., {\bf 43 (5)} (2011), 2009--2049.
\bibitem{PZ2} M.~Paicu, and A.~Zarnescu,
{\it Energy dissipation and regularity for a coupled Navier-Stokes and $Q$-tensor system}, 
Arch. Ration. Mech. Anal., {\bf 203 (1)} (2012), 45--67.
\bibitem{Sa} H.~Saito,
{\it Compressible fluid model of Korteweg type with free boundary condition: Model problem},
Funk. Ekvac., {\bf 62} (2019), 337--386.


\bibitem{SS1} M.~Schonbek and Y.~Shibata, 
{\it Global well-posedness and decay for a 
$\mathbb Q$
tensor model of incompressible nematic liquid crystals in  
$\BR^N$},
J. Differential Equations, {\bf 266 (6)} (2019), 3034--3065. 

\bibitem{S} Y.~Shibata,
{\it $\CR$ boundedness, maximal regularity and free boundary problems for the Navier Stokes equations}
Mathematical analysis of the Navier-Stokes equations, 
Lecture Notes in Math., {\bf 2254}, Fond. CIME/CIME Found. Subser., Springer, 
(2020), 193--462. 



\bibitem{S2} Y.~Shibata,
{\it New thought on Matsumura-Nishida theory in the $L_p$-$L_q$ maximal regularity
framework},
preprint. 


\bibitem{SS2} Y.~Shibata and S.~Shimizu, 
{\it On the $L_p$-$L_q$ maximal regularity of the Neumann 
problem for the Stokes equations in a bounded domain},
J.~Reine Angew. Math. {\bf 615} (2008), 157--209.


\bibitem{W} L.~Weis, 
{\it Operator-valued Fourier multiplier 
theorems and maximal $L_p$-regularity}. 
Math. Ann. {\bf 319} (2001), 735--758.

\bibitem{X} Y.~Xiao,
{\it Global strong solution to the three-dimensional liquid crystal flows of Q-tensor model},
J. Differential Equations {\bf 262 (3)} (2017), 1291--1316.
}
\end{thebibliography}
\end{document}